\newtheorem{theorem}{Theorem}
\newtheorem{lemma}[theorem]{Lemma}
\newtheorem{remark}[theorem]{Remark}
\newcommand{\alp}{\mathrm{alph}}
\newcommand{\val}{\mathrm{val}}
\newcommand{\IRR}{\mathrm{IRR}}
\newcommand{\rank}{\mathrm{rank}}
\newcommand{\rhs}{\mathrm{rhs}}
\newcommand{\cA}{\mathcal{A}}
\newcommand{\cB}{\mathcal{B}}
\newcommand{\cG}{\mathcal{G}}
\newcommand{\cH}{\mathcal{H}}
\newcommand{\dG}{\mathbb{G}}
\newcommand{\dM}{\mathbb{M}}
\begin{document}

\title{Knapsack in graph groups, HNN-extensions and amalgamated products}

\author{Markus Lohrey}
\affil{Universit\"at Siegen, Germany\\
\texttt{lohrey@eti.uni-siegen.de}}

\author{Georg Zetzsche}
\affil{Technische Universit\"{a}t Kaiserslautern, Germany\\
\texttt{zetzsche@cs.uni-kl.de}}

\date{}

\maketitle

\begin{abstract}
It is shown that the knapsack problem, which was introduced by Myasnikov et al.
for arbitrary finitely generated groups, can be solved in {\sf NP} for graph
groups. This result even holds if the group elements are represented in a
compressed form by SLPs, which generalizes the classical {\sf NP}-completeness
result of the integer knapsack problem. We also prove general transfer results:
{\sf NP}-membership of the knapsack problem is passed on to finite
extensions, HNN-extensions over finite associated subgroups, and amalgamated
products with finite identified subgroups.
\end{abstract}

\section{Introduction}

In their paper \cite{MyNiUs14}, Myasnikov, Nikolaev, and Ushakov started the investigation of classical 
discrete optimization problems, which are classically formulated over the integers,
for arbitrary in general non-commutative groups.
Among other problems, they introduced for a finitely generated group $G$ 
the {\em knapsack problem} and the {\em subset sum problem}.
The input for the knapsack problem is a sequence of group elements $g_1, \ldots, g_k, g \in G$ (specified
by finite words over the generators of $G$) and it is asked whether there exists a solution
$(x_1, \ldots, x_k) \in \mathbb{N}^k$
of the equation $g_1^{x_1} \cdots g_k^{x_k} = g$. For the subset sum problem one restricts the solution
to $\{0,1\}^k$.
For the particular case $G = \mathbb{Z}$  (where the additive notation 
$x_1 \cdot g_1 + \cdots + x_k \cdot g_k = g$ is usually preferred)
these problems are {\sf NP}-complete if the numbers $g_1, \ldots, g_k,g$ are 
encoded in binary representation. For subset sum, this is a classical result from Karp's seminal paper \cite{Karp72} on 
{\sf NP}-completeness. Knapsack for integers is usually formulated in a more general form in the literature;
{\sf NP}-completeness of the above form (for binary encoded integers) was shown  in \cite{Haa11}, where 
the problem was called \textsc{multisubset sum}).\footnote{Note that if we ask for a solution $(x_1,\ldots, x_k)$
in $\mathbb{Z}^k$, then knapsack can be solved in polynomial time (even for binary encoded integers) by checking whether
$\mathrm{gcd}(g_1, \ldots, g_k)$ divides $g$.}
Interestingly, if we consider subset sum  for the group 
$G = \mathbb{Z}$, but encode the input numbers  $g_1, \ldots, g_k, g$ 
in unary notation, then the problem is in {\sf DLOGTIME}-uniform
$\mathsf{TC}^0$ (a small subclass of polynomial time and even of logarithmic space that captures the complexity
of multiplication of binary encoded numbers) \cite{ElberfeldJT11}, and 
the same holds for knapsack, since the instance $x_1 \cdot g_1 + \cdots + x_k \cdot g_k = g$ has a solution 
if and only if it has a solution with $x_i \leq  k \cdot (\max\{ g_1, \ldots, g_k,g \})^3$ \cite{Papadimitriou81}. This allows
to reduce unary knapsack to unary subset sum. See \cite{Jen95} for related results.

In \cite{MyNiUs14} the authors encode elements of the finitely generated group $G$ by words over the group generators
and their inverses. For $G = \mathbb{Z}$ this representation corresponds to the unary encoding of integers. 
Among others, the following results were shown in \cite{MyNiUs14}:
\begin{itemize}
\item Subset sum and knapsack can be solved in polynomial
time for every hyperbolic group. 
\item Subset sum for a virtually nilpotent group (a finite extension of a nilpotent group) can be solved in polynomial time. 
\item For the following groups, subset sum is {\sf NP}-complete (whereas the word problem can be solved in polynomial time):
free metabelian non-abelian groups of finite rank, the wreath product $\mathbb{Z} \wr \mathbb{Z}$, Thompson's group $F$,
and the Baumslag-Solitar group $\mathrm{BS}(1,2)$.
\end{itemize}
Further results on knapsack and subset sum have been recently obtained in \cite{KoeLoZe15}:
\begin{itemize}
\item For a virtually nilpotent group, subset sum belongs to {\sf NL} (nondeterministic logspace).
\item There is a nilpotent
group of class $2$ (in fact, a direct product of sufficiently many copies of the discrete Heisenberg group $H_3(\mathbb{Z})$),
for which knapsack is undecidable.
\item The knapsack problem for the discrete Heisenberg group $H_3(\mathbb{Z})$ is decidable. In particular, 
together with the previous point it follows that decidability
of knapsack is not preserved under direct products.
\item There is a polycyclic group with an {\sf NP}-complete subset sum problem.
\item The knapsack problem is decidable for all co-context-free groups.
\end{itemize}
The focus of this paper will be on the knapsack problem. We will prove that this problem can be solved in {\sf NP}
for every {\em graph group}. Graph groups are also known as right-angled Artin groups or free partially commutative 
groups. A graph group is specified by a finite simple graph. The vertices are the generators of the group, and two generators
$a$ and $b$ are allowed to commute if and only if $a$ and $b$ are adjacent. Graph groups somehow interpolate between 
free groups and free abelian groups and can be seen as a group counterpart of trace monoids (free partially commutative
monoids) that have been used for the specification of concurrent behavior. In combinatorial group theory, graph groups
are currently a hot topic, mainly because of their rich subgroup structure \cite{BesBr97,CrWi04,GhPe07}.    
To prove that knapsack belongs to {\sf NP} for a graph group, we proceed in two steps:
\begin{itemize}
\item We show that if an instance $g_1^{x_1} \cdots g_k^{x_k} = g$ has a solution in a graph group, then
it has a solution, where every $x_i$ is bounded exponentially in the input length (the total length of all words
representing the group elements $g_1, \ldots, g_k,g$). 
\item We then guess the binary encodings of numbers $n_1, \ldots, n_k$ that are bounded by the exponential bound
from the previous point and verify in polynomial time the identity $g_1^{n_1} \cdots g_k^{n_k} = g$. The latter
problem is an instance of the so called {\em compressed word problem} for a graph group. This is the classical word
problem, where the input group element is given succinctly by a so called {\em straight-line program} (SLP), which is a context-free
grammar that produces a single word (here, a word over the group generators and their inverses). An SLP 
with $n$ productions in Chomsky normal form can produce a string of length $2^n$. It has been shown in \cite{LoSchl07}
that the compressed word problem for a graph group can be solved in polynomial time, see also \cite{Loh14} for more details.
\end{itemize}
In fact, our proof yields a stronger result: First, it yields an {\sf NP} procedure for 
solving knapsack-like equations $h_0 g_1^{x_1} h_1 \cdots h_{k-1} g_k^{x_k} h_k = 1$, where some of the variables
$x_1, \ldots, x_k$ are allowed to be identical. We call such an equation an {\em exponent equation}.
Hence, we prove that solvability of exponent equations over a graph group belongs to {\sf NP}.

Second, we show that the latter result even holds, when the group elements $g_1, \ldots, g_k,h_0, \ldots, h_k$ 
are given succinctly by SLPs; we speak of {\em solvability of compressed exponent equations}. 
This is interesting, since the SLP-encoding of group elements corresponds in the case $G = \mathbb{Z}$
to the binary encoding of integers. Hence, membership in {\sf NP} for solvability of compressed exponent equations
over a graph group generalizes the classical {\sf NP} membership for knapsack (over $\mathbb{Z}$) to a much wider
class of groups.

Furthermore, we extend the class of groups for which solvability of knapsack
(resp. compressed exponent equations) can be checked in {\sf NP} by proving
general transfer results. Our first transfer result states that if $H$ is a
finite extension of $G$ and solvability of compressed exponent equations (or
knapsack) can be checked in {\sf NP} for $G$, then the same holds for $H$.
This provides such algorithms for the  large class of {\em virtually special
groups}. These are finite extensions of subgroups of graph groups.  Virtually
special groups recently played a major role in a spectacular breakthrough in
three-dimensional topology, namely the solution of the virtual Haken conjecture
\cite{Agol12}. In the course of this development it turned out that the class
of virtually special groups is extremely rich: It contains Coxeter groups
\cite{HagWi10}, one-relator groups with torsion \cite{Wis09}, fully residually
free groups \cite{Wis09}, and fundamental groups of  hyperbolic 3-manifolds
\cite{Agol12}.

We also prove transfer results for HNN-extensions and amalgamated products with
finite associated (resp. identified) subgroups in the case of the knapsack
problem. Such HNN-extensions and amalgamated products play a fundamental role
in combinatorial group theory~\cite{LySch77}.  For example, they appear in
Stallings' decomposition of groups with more than one end~\cite{Stal71} and in
the construction of virtually free groups~\cite{DiDu89}. Furthermore, they are known to
preserve a wide variety of structural and algorithmic properties (see Section~\ref{sec-transfer}).

 A side product of our proof is that the set of all solutions $(x_1, \ldots, x_k) \in \mathbb{N}^k$ of an exponent 
 equation $g_1^{x_1} \cdots g_k^{x_k} = g$ over a graph group is semilinear, and a semilinear representation 
 can be produced effectively. This seems to be true for many groups, e.g., for all co-context-free groups \cite{KoeLoZe15}.
 On the other hand, the discrete Heisenberg group $H_3(\mathbb{Z})$ is an example of a group for which 
 solvability of exponent equations is decidable but the set of all solutions of an exponent equation 
 is not semilinear; it is defined by a single quadratic Diophantine equation \cite{KoeLoZe15}.
 
 Finally, we complement our upper bounds by a new lower bound: Knapsack and subset sum are both {\sf NP}-complete
 for a direct product of two free groups of rank two ($F_2 \times F_2$). This group is the graph group corresponding to a cycle of length four.
 {\sf NP}-hardness already holds for the case that the input group elements are specified by words over the generators (for
 SLP-compressed words, {\sf NP}-hardness already holds for $\mathbb{Z}$) and the exponent variables
 are allowed to take values in $\mathbb{Z}$ (instead  $\mathbb{N}$). {\sf NP}-completeness of subset sum for 
 $F_2 \times F_2$  solves an open problem from \cite{FrenkelNU15}.
 
 \paragraph{\bf Related work.} The knapsack problem is a special case of the more general {\em rational subset membership problem}.
 A rational subset of a finitely generated monoid $M$ is the homomorphic image in $M$ of a regular language over the generators of $M$.
 In the rational subset membership problem for $M$ the input consists of a rational subset $L \subseteq M$ (specified by a finite automaton) and an element $m \in M$ and it is asked
 whether $m \in L$. It was shown in \cite{LohSte08} that the rational subset membership problem for a graph group $G$ is decidable if and only
 if the corresponding graph has (i) no induced cycle on four nodes (C4) and (ii) no induced path on four nodes (P4). 
 For the decidable cases, the precise complexity is open.

 Knapsack for $G$ can be also viewed as the question, whether a word equation $X_1 X_2 \cdots X_n =1$, where
 $X_1, \ldots, X_n$ are variables, together with constraints of the form $\{g^n \mid n \geq 0 \}$
 for the variables has a solution in $G$. Such a solution is a mapping $\varphi : \{X_1, \ldots, X_n\} \to G$ such that  $\varphi(X_1 X_2 \cdots X_n)$ evaluates to $1$ in $G$
 and all constraints are satisfied. For another class of constraints (so called normalized rational constraints, which do not cover constraints
 of the form $\{g^n \mid n \geq 0 \}$), solvability of general word equations 
 was shown to be decidable ($\mathsf{PSPACE}$-complete) for graph groups by Diekert and Muscholl \cite{DiMu06}. This result was extended in \cite{DiLo08IJAC} to a transfer theorem for 
 graph products. A graph product is specified by a finite simple graph, where every node is labelled with a group. The associated group is 
 obtained from the free product of all vertex groups by allowing elements from adjacent groups to commute. Note that decidability of knapsack
 is not preserved under graph products. It is even not preserved under direct products, see the above mentioned results from \cite{KoeLoZe15}.

\section{Words and straight-line programs}

For a word $w$ we denote with $\alp(w)$ the set of symbols occurring in $w$.
 The length of the word $w$ is $|w|$.
 
 A {\em straight-line program}, briefly {\em SLP}, is basically a context-free grammar that produces exactly one string.
To ensure this, the grammar has to be acyclic and deterministic (every variable has a unique production
where it occurs on the left-hand side). Formally, an SLP
is a tuple $\mathcal{G}  = (V,\Sigma,\rhs,S)$, where $V$ is a finite set of variables (or nonterminals),
$\Sigma$ is the terminal alphabet, 
$S \in V$ is the start variable, and $\rhs$ maps every variable to a right-hand side $\rhs(A) \in (V \cup \Sigma)^*$.
We require that there is a linear order $<$ on $V$ such that $B < A$, whenever $B \in N \cap \alp(\rhs(A))$.
Every variable $A \in V$ derives to a unique string $\val_{\cG}(A)$ 
by iteratively replacing variables by the corresponding right-hand sides, starting with $A$.
Finally, the string derived by $\cG$ is $\val(\cG) = \val_{\cG}(S)$.

Let  $\cG  = (V,\Sigma,\rhs,S)$ be an SLP.
The {\em size} of $\cG$ is $|\cG| = \sum_{A \in V} |\rhs(A)|$, i.e.,  the total length of all right-hand sides.
A simple induction shows that for every SLP $\cG$ of size $m$ one has
$|\val(\cG)| \leq \mathcal{O}(3^{m/3}) \subseteq 2^{O(n)}$ \cite[proof of Lemma~1]{CLLLPPSS05}. On the other hand, it is straightforward to define an SLP
$\cH$ of size $2n$ such that $|\val(\cH)| \geq 2^n$. 
This justifies to see an SLP $\cG$ as a compressed representation of the string $\val(\cG)$, 
and exponential compression rates can be achieved in this way.
More details on SLPs can be found in the survey \cite{Loh12survey}.
 
 \section{Knapsack and exponent equations}
 
 We assume that the reader has some basic knowledge concerning (finitely generated) groups, see e.g. \cite{LySch77}
 for further details. Let $G$ be a finitely generated group, and let $A$ be a finite generating set for $G$. Then,
 elements of $G$ can be represented by finite words over the alphabet $A^{\pm 1} = A \cup A^{-1}$.

An {\em exponent equation} over $G$ is an equation of the
form $$v_0 u_1^{x_1} v_1 u_2^{x_2}  v_2 \cdots u_n^{x_n} v_n = 1,$$ 
where
$u_1, u_2, \ldots, u_n, v_0, v_1, \ldots, v_n \in G$ are group elements that
are given by finite words over the alphabet $A^{\pm 1}$
and $x_1, x_2, \ldots, x_n$
are not necessarily distinct variables. Such an exponent equation is solvable
if there exists a mapping $\sigma : \{x_1, \ldots,x_n\} \to \mathbb{N}$ such that
$v_0 u_1^{\sigma(x_1)} v_1 u_1^{\sigma(x_2)}  v_2 \cdots u_n^{\sigma(x_n)} v_n = 1$
in the group $G$. 
{\em Solvability of  exponent equations over $G$} is the following computational problem:

\smallskip
\noindent
{\bf Input:} An exponent equation $E$ over $G$ (where elements of $G$ are specified by words over the 
group generators and their inverses).

\smallskip
\noindent
{\bf Question:} Is $E$ solvable?

\smallskip
\noindent
The {\em knapsack problem} for the group $G$ is the restriction of solvability of  exponent equations over $G$
to exponent equations of the form $u_1^{x_1} u_2^{x_2} \cdots u_n^{x_n} u^{-1} = 1$, or, equivalently 
$u_1^{x_1} u_2^{x_2} \cdots u_n^{x_n} = u$, where the exponent variables $x_1, \ldots, x_n$ have to be
pairwise different.

We will also study a compressed version of exponent equations over $G$, where elements of $G$ are given by SLPs
over $A^{\pm 1}$. A {\em compressed exponent equation} is an exponent equation
$v_0 u_1^{x_1} v_1 u_2^{x_2}  v_2 \cdots u_n^{x_n} v_n = 1$, where the group elements 
$u_1, u_2, \ldots, u_n, v_0, v_1, \ldots, v_n \in G$ are given by SLPs over the terminal alphabet
$A^{\pm 1}$.  The sum of the sizes of these SLPs is the size of the 
compressed exponent equation.

Let us define {\em solvability of compressed  exponent equations over $G$} as the following computational problem:

\smallskip
\noindent
{\bf Input:} A compressed exponent equation $E$ over $G$.

\smallskip
\noindent
{\bf Question:} Is $E$ solvable?

\smallskip
\noindent
The {\em compressed knapsack problem} for $G$ is defined analogously. Note that with this terminology, the classical knapsack problem for binary encoded integers
is the compressed knapsack problem for the group $\mathbb{Z}$. The binary encoding of an integer can be easily transformed into an SLP over the alphabet
$\{a, a^{-1}\}$ (where $a$ is a generator of $\mathbb{Z}$) and vice versa. Thereby the number of bits in the binary encoding and the size of the SLP are linearly
related.

It is a simple observation  that the decidability and complexity of solvability of  (compressed) exponent equations over $G$ as well as  
the (compressed) knapsack problem for $G$ does not depend on the chosen finite generating set for the group $G$.  
Therefore, we do not have to mention the generating set explicitly in these problems.

\begin{remark} \label{remark}
Since we are dealing with a group, one might also allow solution mappings $\sigma :  \{x_1, \ldots,x_n\} \to \mathbb{Z}$
to the integers. But this variant of solvability of  (compressed) exponent equations (knapsack, respectively) can be reduced to the above
version, where $\sigma$ maps to $\mathbb{N}$, by simply replacing a power $u_i^{x_i}$ by $u_i^{x_i} (u^{-1}_i)^{y_i}$, where
$y_i$ is a fresh variable.
\end{remark}
The goal of this paper is to prove the decidability of solvability of exponent equations for so called graph groups.
We actually prove that solvability of compressed exponent equations for a graph group belongs to {\sf NP}.
Graph groups will be introduced in the next section.

\section{Traces and graph groups}

Let $(A,I)$ be a finite simple graph. In other words, the edge relation $I \subseteq A \times A$ is 
irreflexive and symmetric. It is also called the {\em independence relation}, and $(A,I)$ is called
an {\em independence alphabet}.
We consider the monoid $\dM(A, I) = A^*/\!\!\equiv_I$,
where $\equiv_I$ is the smallest congruence relation on the free monoid $A^*$ that contains all pairs
$(ab, ba)$ with $a,b \in A$ and $(a,b) \in I$. This monoid is called a {\em trace monoid} or {\em partially
commutative free monoid}. Elements of $\dM(A, I)$ are called 
{\em Mazurkiewicz traces} or simply {\em traces}. The trace represented by the word $u$ is denoted
by $[u]_I$, or simply $u$ if no confusion can arise. For a language $L \subseteq A^*$
we denote with $[L]_I = \{ u \in A^* \mid \exists v \in L : u \equiv_I v \}$ its {\em partially commutative
closure}.  The length of the trace $[u]_I$ is
$|[u]_I| = |u|$ and its alphabet is $\alp([u]_I) = \alp(u)$. It is easy to see that these
definition do not depend on the concrete word that represents the trace $[u]_I$.
For subsets $B,C \subseteq A$ we write $B I C$ for $B \times C \subseteq I$. If $B = \{a\}$
we simply write $a I C$.
For traces $s,t$ we write $s I t$ for $\alp(s) I \alp(t)$. 
The empty trace $[\varepsilon]_I$ is the identity element of the monoid $\dM(A,I)$ 
and is denoted by $1$.
A trace $t$ is connected if we cannot factorize $t$ as $t = uv$ with $u \neq 1 \neq v$
and $u I v$.

A trace $t \in \dM(A,I)$ can be visualized by its \emph{dependence 
graph} $D_t$. 
To define $D_t$, choose an arbitrary word $w = a_1 a_2 \cdots a_n$,
$a_i \in A$, with $t = [w]_I$ and define 
$D_t = ( \{1,\ldots,n\}, E, \lambda)$, where
$E =  \{ (i,j) \mid i<j, (a_i,a_j) \in D\}$ and $\lambda(i) = a_i$.
If we identify isomorphic dependence graphs, then this 
definition is independent of the chosen word representing $t$.
Moreover, the mapping $t \mapsto D_t$ is injective.
As a consequence of the representation of traces
by dependence graphs, 
one obtains Levi's lemma for traces, see e.g. 
\cite[p. 74]{DieRoz95}, which is one of the fundamental
facts in trace theory. The formal statement is as follows.

\begin{lemma} \label{lemma-levi}
Let $u_1, \ldots, u_m, v_1, \ldots, v_n \in \dM(A,I)$. Then 
\[ u_1u_2 \cdots u_m = v_1 v_2 \cdots  v_n\] if and only if
there exist $w_{i,j} \in \dM(A,I)$ $(1 \leq i \leq m$, $1 \leq j \leq n)$ such that
\begin{itemize}
\item $u_i = w_{i,1}w_{i,2}\cdots w_{i,n}$ for every $1 \leq i \leq m$,
\item $v_j = w_{1,j}w_{2,j}\cdots w_{m,j}$ for every $1 \leq j \leq n$, and
\item $(w_{i,j}, w_{k,\ell})\in I$ if $1 \leq i < k \leq m$ and $n \geq j > \ell \geq 1$.
\end{itemize}
\end{lemma}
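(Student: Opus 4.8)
The plan is to argue via dependence graphs, which by the discussion preceding the statement faithfully represent traces. The "if" direction is routine: given the $w_{i,j}$ with the stated alphabet-independence condition, one checks directly that $u_1 \cdots u_m$ and $v_1 \cdots v_n$ have isomorphic dependence graphs by exhibiting the obvious bijection of vertex sets (grouping the vertices of each product according to which $w_{i,j}$ they come from) and verifying that edges are preserved; the independence condition $(w_{i,j},w_{k,\ell}) \in I$ for $i<k$, $j>\ell$ is exactly what is needed to see that no "crossing" dependence edges appear that would distinguish the two orderings. I would spell this out only briefly.

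For the "only if" direction, suppose $t := u_1 u_2 \cdots u_m = v_1 v_2 \cdots v_n$ in $\dM(A,I)$, and consider the dependence graph $D_t = (\{1,\dots,N\}, E, \lambda)$. The factorization $u_1 \cdots u_m$ partitions the vertex set $\{1,\dots,N\}$ into consecutive blocks $U_1, \dots, U_m$ (the positions contributed by each $u_i$), and similarly $v_1 \cdots v_n$ partitions it into blocks $V_1, \dots, V_n$. Here I must be slightly careful: a priori the two factorizations might induce different vertex sets, so the cleanest route is to fix one linearization $w = a_1 \cdots a_N$ of $t$, pull back both factorizations to factorizations of the \emph{word} $w$ modulo $\equiv_I$, and use that $\equiv_I$-equivalent words have the same dependence graph to transport the block structure. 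Define $w_{i,j}$ to be the trace whose dependence graph is the induced subgraph of $D_t$ on $U_i \cap V_j$ (with the vertex order inherited from $D_t$). Then $u_i = w_{i,1} \cdots w_{i,n}$ holds because $U_i = \bigsqcup_j (U_i \cap V_j)$ and the $V_j$ are consecutive, so reading $U_i$ in order of $D_t$ visits $U_i\cap V_1$, then $U_i \cap V_2$, etc.; and this product of induced subgraphs reconstitutes $D_{u_i}$ precisely because of how dependence edges restrict. Symmetrically $v_j = w_{1,j} \cdots w_{m,j}$. Finally, for $i < k$ and $j > \ell$, take any vertex $p \in U_i \cap V_k$... wait, rather $p \in U_i \cap V_j$ and $q \in U_k \cap V_\ell$: from the $U$-ordering $i<k$ we get $p < q$ in $D_t$, but from the $V$-ordering $j > \ell$ we get $q < p$; since $D_t$ induces a linear order on positions consistent with $w$... here one uses that $p$ and $q$ cannot both precede each other unless they are incomparable in $D_t$, i.e. $(\lambda(p),\lambda(q)) \in I$. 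Hence $\alp(w_{i,j}) \times \alp(w_{k,\ell}) \subseteq I$, which is the claim.

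The main obstacle I anticipate is the bookkeeping in the last point: making rigorous the informal assertion that "$p$ lies to the left of $q$ with respect to one factorization but to the right with respect to the other, hence they are $I$-independent." The subtlety is that the dependence graph is only a partial order, not a total one, and the two word-factorizations correspond to two possibly different linearizations of $D_t$; one needs that any position of $u_i$ can be \emph{moved past} any position of $u_k$ for $k > i$ whenever the $V$-order disagrees, which ultimately rests on the characterization of $\equiv_I$ via adjacent transpositions of independent letters. A clean way to discharge this is to invoke the standard fact (see \cite{DieRoz95}) that two words represent the same trace iff their dependence graphs coincide, and then do all arguments purely on the graph $D_t$ with its induced strict partial order, at which point the independence condition drops out of antisymmetry of that order. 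I would present the proof in that graph-theoretic language to keep the verification short.
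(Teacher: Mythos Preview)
The paper does not give its own proof of this lemma: it simply cites \cite[p.~74]{DieRoz95} and remarks that Levi's lemma ``is a consequence of the representation of traces by dependence graphs.'' Your proposal is exactly that standard dependence-graph argument, so you are aligned with what the paper invokes rather than diverging from it.

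Your sketch is correct. One small clarification for the independence step you worried about: once you view both factorizations as ordered partitions $U_1,\dots,U_m$ and $V_1,\dots,V_n$ of the vertex set of the \emph{same} abstract dependence graph $D_t$ (each satisfying ``no edge points from a later block to an earlier one''), the argument is clean. For $p\in U_i\cap V_j$ and $q\in U_k\cap V_\ell$ with $i<k$ and $j>\ell$: the $U$-partition forbids an edge $q\to p$, and the $V$-partition forbids an edge $p\to q$. Since in $D_t$ any two \emph{dependent} positions are joined by an edge (this is the definition, not merely comparability in the induced partial order), the absence of both edges gives $(\lambda(p),\lambda(q))\in I$ directly. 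So the obstacle you anticipated dissolves once you phrase everything on the single graph $D_t$; no appeal to linearizations or antisymmetry of a partial order is needed.
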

\noindent
The situation in the lemma will be visualized by a 
diagram of the following kind. The $i$--th column
corresponds to $u_i$, the $j$--th row
corresponds to $v_j$, and the intersection of the $i$--th
column and the $j$--th row represents $w_{i,j}$.
Furthermore $w_{i,j}$ and $w_{k,\ell}$ are independent
if one of them is left-above the other one. 
\begin{center}
  \begin{tabular}{c||c|c|c|c|c|}\hline
  $v_n$  & $w_{1,n}$ & $w_{2,n}$ & $w_{3,n}$ & \dots  & $w_{m,n}$ \\ \hline
  \vdots & \vdots    & \vdots    & \vdots    & \vdots & \vdots    \\ \hline
  $v_3$  & $w_{1,3}$ & $w_{2,3}$ & $w_{3,3}$ & \dots  & $w_{m,3}$ \\ \hline
  $v_2$  & $w_{1,2}$ & $w_{2,2}$ & $w_{3,2}$ & \dots  & $w_{m,2}$ \\ \hline
  $v_1$  & $w_{1,1}$ & $w_{2,1}$ & $w_{3,1}$ & \dots  & $w_{m,1}$ \\ \hline\hline
         & $u_1$     & $u_2$     & $u_3$     & \dots  & $u_m$
  \end{tabular}
  \end{center}
A consequence of Levi's Lemma is that
trace monoids are cancellative, i.e., $usv=utv$ implies $s=t$ for all
traces $s,t,u,v\in\dM(A,I)$. 

For a trace $u \in \dM(A,I)$ let $\rho(u)$ be the number of prefixes of $u$.
We will use the following statement from \cite{BeMaSa89}.

\begin{lemma} \label{lemma-prefixes}
Let $u \in \dM(A,I)$ be a trace of length $n$. Then $\rho(u) \in O(n^\alpha)$, where $\alpha$
is the size of a largest clique of the complementary graph $(A,I)^c = (A, (A \times A) \setminus I)$.
\end{lemma}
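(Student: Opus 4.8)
The plan is to identify the prefixes of $u$ with the \emph{order ideals} (downward‑closed subsets) of its dependence graph $D_u$, viewed as a finite poset, and then to bound the number of order ideals of a poset of small width by a Dilworth‑style counting argument.

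First I would make the correspondence precise. Fix a word $a_1\cdots a_n$ with $u=[a_1\cdots a_n]_I$, and let $\le$ be the partial order induced on $\{1,\dots,n\}$ by reachability in the acyclic digraph $D_u$. For $J=\{i_1<\dots<i_m\}$ put $u_J=[a_{i_1}\cdots a_{i_m}]_I$. I claim that $J\mapsto u_J$ is a bijection from the order ideals of $(\{1,\dots,n\},\le)$ onto the prefixes of $u$. If $J$ is an order ideal and $j_1<\dots<j_{n-m}$ enumerates its complement, then every inversion of the sequence $i_1,\dots,i_m,j_1,\dots,j_{n-m}$ relative to $1,\dots,n$ is a pair of positions $j_r<i_s$ with $j_r\notin J$ and $i_s\in J$; downward‑closedness of $J$ forces $(a_{j_r},a_{i_s})\in I$, so the word $a_{i_1}\cdots a_{i_m}a_{j_1}\cdots a_{j_{n-m}}$ can be transformed into $a_1\cdots a_n$ by repeatedly swapping adjacent independent letters, which shows $u_J\cdot[a_{j_1}\cdots a_{j_{n-m}}]_I=u$ and hence that $u_J$ is a prefix. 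Conversely, a factorization $u=v\,w$ induces, via the canonical label‑preserving isomorphism between the dependence graphs of two words representing the same trace (a consequence of Levi's Lemma~\ref{lemma-levi}), a set $J$ of positions of $u$ that carries $v$; since any two positions whose labels are dependent -- in particular, any two positions with equal labels -- are $\le$‑comparable, $J$ is downward‑closed, and $u_J=v$. Injectivity is then clear, because the occurrences of each fixed letter form a $\le$‑chain, so $J$ is recovered from the Parikh image of $u_J$.

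It then suffices to count order ideals. The key point is that an antichain of $(\{1,\dots,n\},\le)$ is a set of positions with pairwise independent labels -- two positions whose labels are dependent are joined by an edge of $D_u$ and hence comparable -- and, $I$ being irreflexive, these labels are pairwise distinct; hence an antichain has at most $\alpha$ elements, i.e.\ the poset has width at most $\alpha$. By Dilworth's theorem it partitions into chains $C_1,\dots,C_r$ with $r\le\alpha$ and $|C_1|+\dots+|C_r|=n$, and every order ideal meets each $C_i$ in an initial segment, so it is determined by the tuple $(|J\cap C_1|,\dots,|J\cap C_r|)$. Consequently $\rho(u)$, which equals the number of order ideals, is at most $\prod_{i=1}^{r}(|C_i|+1)\le (n+1)^{\alpha}\in O(n^{\alpha})$, as $\alpha$ is a constant depending only on $(A,I)$.

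I expect the only real obstacle to lie in the first step, and specifically in checking that the position set $J$ ``carrying'' a prefix is well defined (independent of the representative chosen for $u$, and of the representatives chosen for $v$ and $w$) and is downward‑closed -- which is exactly the combinatorial bookkeeping that Levi's Lemma~\ref{lemma-levi} and the cancellativity of $\dM(A,I)$ are designed to support. Once the correspondence is in place, the width bound and the Dilworth count are routine.
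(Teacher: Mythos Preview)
The paper does not give a proof of this lemma; it simply cites \cite{BeMaSa89}. Your argument---identify prefixes with order ideals of the dependence poset, bound the width, then count ideals via a Dilworth decomposition---is the standard one and is correct.

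One point worth flagging, though: the lemma as literally stated in the paper appears to contain a typo, and your proof actually establishes the corrected version. With $\alpha$ defined as the largest clique of $(A,I)^c$ (i.e., the maximal number of pairwise \emph{dependent} letters), the bound $O(n^\alpha)$ is simply false: in the free commutative monoid on $k\ge 2$ letters, $(A,I)^c$ has no edges between distinct vertices, so $\alpha=1$, yet $\rho((a_1\cdots a_k)^m)=(m+1)^k$ grows like $n^k$. What your width argument shows is that an antichain in $D_u$ has pairwise \emph{independent}, pairwise distinct labels, hence at most $\omega$ elements, where $\omega$ is the largest clique of $(A,I)$; the Dilworth count then gives $\rho(u)\le (n+1)^\omega$. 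This is the intended statement, and it is exactly how the paper itself uses the constant in the proof of Theorem~\ref{thm-main-technical} (``$\alpha$ (which is the maximal number of pairwise independent letters)''). So your line ``hence an antichain has at most $\alpha$ elements'' is valid for that intended reading of $\alpha$, not for the one written in the lemma; you may want to note the discrepancy explicitly.
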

\noindent
With an independence alphabet $(A,I)$ we associate the group 
$$
\dG(A,I) = \langle A \mid ab=ba \ ( (a,b) \in I) \rangle.
$$
Such a group is called a {\em graph group}, or
{\em right-angled Artin group}\footnote{This term comes from the fact that right-angled Artin groups
are exactly  the Artin groups corresponding to right-angled Coxeter groups.}, or {\em free partially commutative group}.
Here, we use the term graph group.
Graph groups received a lot of attention in group theory during the last few years, mainly
due to their rich subgroup structure \cite{BesBr97,CrWi04,GhPe07}, and their relationship to low dimensional
topology (via so called virtually special groups) \cite{Agol12,HagWi10,Wis09}.
We represent elements of $\dG(A,I)$ by traces over an extended independence alphabet.
For this, let $A^{-1} = \{ a^{-1} \mid a \in A\}$ be a disjoint copy of the alphabet $A$, and
let $A^{\pm 1} = A \cup A^{-1}$. We define $(a^{-1})^{-1} ¥= a$ and for a word
$w = a_1 a_2 \cdots a_n$ with $a_i \in A^{\pm 1}$ we define $w^{-1} = a^{-1}_n \cdots a^{-1}_2 a^{-1}_1$.
This defines an involution (without fixed points) on $(A^{\pm 1})^*$.
We extend the independence relation $I$ to $A^{\pm 1}$ by
$(a^x, b^y) \in I$ for all $(a,b) \in I$ and $x,y \in \{-1,1\}$. Then, there is a canonical surjective
morphism $h : \dM(A^{\pm 1},I) \to \dG(A,I)$ that maps every symbol $a \in A^{\pm 1}$ to the 
corresponding group element. Of course, $h$ is not injective, but we can easily define a subset
$\IRR(A^{\pm 1},I) \subseteq \dM(A^{\pm 1}, I)$ of {\em irreducible traces} such that $h$ restricted
to $\IRR(A^{\pm 1},I)$ is bijective. The set $\IRR(A¥^{\pm 1},I)$ consists of all traces $t \in \dM(A^{\pm 1},I)$ 
such that $t$ does not contain a factor $[aa^{-1}]_I$ with $a \in A^{\pm 1}$, i.e., there do not exist 
$u, v \in  \dM(A^{\pm 1}, I)$ and $a \in A^{\pm 1}$ such that in $\dM(A^{\pm 1}, I)$ we have a factorization $t = u [aa^{-1}]_I v$.
For every trace $t$ there exists a corresponding {\em irreducible normal form} that is obtained by
removing from $t$ factors $[aa^{-1}]_I$ with $a \in A^{\pm 1}$ as long as possible. It can be shown 
that this reduction process is terminating (which is trivial since it reduces the length) and confluent (in \cite{KuLo05ijac}
a more general confluence lemma for graph products of monoids is shown).
Hence, the  irreducible normal form of $t$ does not depend on the concrete order of reduction steps.
For a group element $g \in \dG(A,I)$ we denote with $|g|$ the length of the unique trace $t \in \IRR(A^{\pm 1},I)$
such that $h(t)=g$.

For a trace $t = [u]_I$ ($u \in (A^{\pm 1})^*$) we can define
$t^{-1} = [u^{-1}]_I$. This is well-defined, since $u \equiv_I v$ implies $u^{-1} \equiv_I v^{-1}$.
The following lemma will be important, see \cite[Lemma 23]{DiMu06}:

\begin{lemma}\label{lemma-product-IRR}
Let $s,t \in \IRR(A^{\pm 1},I)$. Then there exist unique factorizations $s = u p$, $t = p^{-1} v$ such that
$uv \in \IRR(A^{\pm 1},I)$. Hence, $uv$ is the  irreducible normal form of $st$.
\end{lemma}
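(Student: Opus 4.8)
The plan is to prove existence and uniqueness separately, in both cases reducing to Levi's Lemma (Lemma~\ref{lemma-levi}), cancellativity of $\dM(A^{\pm1},I)$, and the already noted confluence of the rewriting system that deletes factors $[aa^{-1}]_I$ (so that irreducible normal forms are unique). Throughout I write $\mathrm{NF}(w)$ for the irreducible normal form of a trace $w$; note that whenever $s = up$ and $t = p^{-1}v$ we have $st = u\,(pp^{-1})\,v$ and $pp^{-1}$ rewrites to $1$, so $st$ rewrites to $uv$; hence if moreover $uv\in\IRR(A^{\pm1},I)$ then $uv = \mathrm{NF}(st)$.

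For \emph{existence} I would induct on $|s|+|t|$. If $st$ is already irreducible, take $p=1$, $u=s$, $v=t$. Otherwise $st$ contains a forbidden factor $[aa^{-1}]_I$, i.e.\ there is a linearization of $st$ of the form $w_1\cdot a\cdot a^{-1}\cdot w_2$. The key claim is that, since $s$ and $t$ are themselves irreducible, this factor straddles the boundary: $s = s'a$ and $t = a^{-1}t'$ for traces $s',t'$, which are then again irreducible. To see this, look at the two singled-out letters as vertices $P$ (labelled $a$) and $Q$ (labelled $a^{-1}$) of the dependence graph $D_{st}$, which is $D_s$ and $D_t$ with extra edges from $D_s$-vertices to dependent $D_t$-vertices. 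If both $P,Q$ came from $D_s$ (resp.\ both from $D_t$), then, since nothing lies between them in the linearization, restricting to $D_s$ (resp.\ $D_t$) would exhibit $[aa^{-1}]_I$ as a factor of $s$ (resp.\ $t$), a contradiction; and $P$ from $D_t$, $Q$ from $D_s$ is impossible because $P,Q$ are dependent, forcing $Q$ before $P$. So $P$ comes from $D_s$ and $Q$ from $D_t$. Since the extension of $I$ to $A^{\pm1}$ ignores signs, a vertex is dependent on $a$ iff it is dependent on $a^{-1}$; hence any $D_s$-vertex after $P$ and dependent on $P$ would be dependent on $Q$ too, so it would lie strictly between $P$ and $Q$ in the linearization, contradicting adjacency. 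Thus $P$ is maximal in $D_s$, i.e.\ $s = s'a$, and symmetrically $Q$ is minimal in $D_t$, i.e.\ $t = a^{-1}t'$. Now apply the induction hypothesis to $s',t'$ to get $s' = up'$, $t' = p'^{-1}v$ with $uv$ irreducible, and set $p := p'a$; then $s = up$, $t = p^{-1}v$, $uv\in\IRR(A^{\pm1},I)$, and by the first paragraph $uv=\mathrm{NF}(st)$.

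For \emph{uniqueness}, suppose $s = u_1p_1 = u_2p_2$ and $t = p_1^{-1}v_1 = p_2^{-1}v_2$ with $u_1v_1,u_2v_2$ irreducible. By the first paragraph $u_1v_1 = \mathrm{NF}(st) = u_2v_2$. Apply Levi's Lemma to $u_1p_1 = u_2p_2$: this yields traces $w_{11},w_{12},w_{21},w_{22}$ with $u_1 = w_{11}w_{12}$, $u_2 = w_{11}w_{21}$, $p_1 = w_{21}w_{22}$, $p_2 = w_{12}w_{22}$, and $w_{12}\,I\,w_{21}$. Cancelling $w_{11}$ in $u_1v_1 = u_2v_2$ gives $w_{12}v_1 = w_{21}v_2$; applying Levi's Lemma again to this equation produces a common prefix of $w_{12}$ and $w_{21}$ whose alphabet is therefore independent of itself, hence empty since $I$ is irreflexive, so we obtain a trace $z$ with $v_1 = w_{21}z$ and $v_2 = w_{12}z$. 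Substituting, $t = p_1^{-1}v_1 = w_{22}^{-1}\,w_{21}^{-1}w_{21}\,z$; if $w_{21}\neq 1$, picking a minimal letter $g$ of $w_{21}$ exhibits $[g^{-1}g]_I$ as a factor of $w_{21}^{-1}w_{21}$, hence of $t$, contradicting $t\in\IRR(A^{\pm1},I)$. So $w_{21}=1$, and then symmetrically $t = p_2^{-1}v_2 = w_{22}^{-1}\,w_{12}^{-1}w_{12}\,z$ together with $t\in\IRR(A^{\pm1},I)$ forces $w_{12}=1$. Hence $p_1 = w_{22} = p_2$, and $u_1 = u_2$, $v_1 = v_2$ follow by cancellativity.

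The step I expect to be the main obstacle is uniqueness, and within it the point that one must use irreducibility of $s$ and $t$ themselves, not merely of the product $uv$: it is precisely this hypothesis that kills the two "partial overlap" traces $w_{12},w_{21}$ coming out of Levi's Lemma (indeed, dropping it, examples like $s=t^{-1}$ a single generator already break uniqueness). The existence half is more routine, the only delicate part being the dependence-graph argument that a forbidden factor of $st$ must sit across the $s$–$t$ boundary.
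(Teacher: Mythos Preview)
The paper does not give its own proof of this lemma; it simply cites \cite[Lemma~23]{DiMu06}. So there is nothing in the paper to compare against, and I can only assess your argument on its own merits.

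Your proof is correct. The existence part is the standard ``peel one cancelling pair at a time'' induction, and your dependence-graph argument that a forbidden factor of $st$ must straddle the $s$--$t$ boundary is clean and uses exactly the right ingredients (irreducibility of $s$ and $t$ separately, and the fact that $I$ on $A^{\pm1}$ ignores signs so that a letter dependent on $a$ is also dependent on $a^{-1}$). The uniqueness part is where most presentations are vaguer than they should be, and you handle it well: the double application of Levi's Lemma together with $w_{12}\,I\,w_{21}$ forces the common prefix $x_{11}$ to have self-independent alphabet, hence to be empty; then the key observation that $t = w_{22}^{-1}\,w_{21}^{-1}w_{21}\,z$ would contain $g^{-1}g$ as a factor if $w_{21}\neq 1$ is exactly what is needed, and uses irreducibility of $t$ (not of $uv$), as you correctly emphasise in your closing remarks.

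One cosmetic point: in the existence step you should perhaps state explicitly that $s'$ and $t'$ are irreducible (as factors of irreducible traces) before invoking the induction hypothesis, but this is obvious.
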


\section{Factorizations of powers}

Based on Levi's lemma we prove in this section a factorization result for powers of a connected
trace. We start with the case that we factorize such a power into two factors.

\begin{lemma}  \label{lemma-u^n=xy}
Let $u \in \dM(A,I) \setminus \{1\}$ be a connected trace. Then,  for 
all $x \in \mathbb{N}$ and all traces $y_1,y_2$ the following two statements are equivalent:
\begin{enumerate}[(i)]
\item $u^x = y_1 y_2$
\item There exist $l,k,c \in \mathbb{N}$ and traces $s,p$ such that:
$y_1 = u^{l} s$, $y_2 = p u^k$, $s p = u^c$, $l+k+c = x$, and $c \leq |A|$.
\end{enumerate}
\end{lemma}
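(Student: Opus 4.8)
The implication (ii) $\Rightarrow$ (i) is immediate: if $y_1 = u^{l} s$, $y_2 = p u^{k}$ and $sp = u^{c}$ with $l+k+c = x$, then $y_1 y_2 = u^{l}(sp)u^{k} = u^{l+c+k} = u^{x}$. So the content lies entirely in (i) $\Rightarrow$ (ii), and here is how I would proceed.

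Assume $u^{x} = y_1 y_2$ (the case $x = 0$ being trivial). First I would apply Levi's Lemma (Lemma~\ref{lemma-levi}) to the identity $\underbrace{u \cdots u}_{x} = y_1 \cdot y_2$, obtaining traces $w_{i,1}, w_{i,2}$ $(1 \le i \le x)$ with $u = w_{i,1} w_{i,2}$ for every $i$, with $y_1 = w_{1,1}\cdots w_{x,1}$, with $y_2 = w_{1,2}\cdots w_{x,2}$, and with $w_{i,2}\, I\, w_{j,1}$ whenever $i < j$. The next step is to show that $w_{i+1,1}$ is always a prefix of $w_{i,1}$: applying Levi's Lemma once more to $w_{i,1} w_{i,2} = w_{i+1,1} w_{i+1,2}$ and using $w_{i,2}\, I\, w_{i+1,1}$, one checks that $w_{i,2}$ and $w_{i+1,1}$ cannot genuinely overlap inside $u$ (a common symbol in the overlap would be independent of itself, contradicting irreflexivity of $I$), which yields $w_{i,1} = w_{i+1,1} t_i$ and $w_{i+1,2} = t_i w_{i,2}$ for some trace $t_i$. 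Hence $w_{1,1}, \ldots, w_{x,1}$ is a descending chain of prefixes of $u$; since the prefix order on traces is a partial order, the indices with $w_{i,1} = u$ form an initial segment $\{1,\dots,l\}$ and those with $w_{i,1} = 1$ a final segment $\{x-k+1,\dots,x\}$, while for the $c := x-l-k$ middle indices both $w_{i,1}$ and $w_{i,2}$ are nonempty. As $w_{i,2} = 1$ for $i \le l$ and $w_{i,1}=1$, $w_{i,2}=u$ for $i > x-k$, setting $s := w_{l+1,1}\cdots w_{x-k,1}$ and $p := w_{l+1,2}\cdots w_{x-k,2}$ gives $y_1 = u^{l} s$ and $y_2 = p u^{k}$; cancellativity of $\dM(A,I)$ applied to $u^{x} = u^{l} s p u^{k}$ then forces $sp = u^{c}$, and $l+k+c = x$ by construction.

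The crux — and the step I expect to be the real obstacle — is the bound $c \le |A|$, and this is where connectedness of $u$ must enter. The plan is to show that the alphabets $\alp(w_{i,1})$ strictly shrink across the $c$ middle indices. Fix a middle index $i$ with $i+1$ also a middle index, and suppose for contradiction that $\alp(t_i) \subseteq \alp(w_{i+1,1})$. Then $w_{i,2}\, I\, w_{i+1,1}$ also yields $w_{i,2}\, I\, t_i$, hence $w_{i,2}\, I\, w_{i,1}$ since $\alp(w_{i,1}) = \alp(w_{i+1,1}) \cup \alp(t_i)$; but then $u = w_{i,1} w_{i,2}$ is a factorization into two nonempty independent traces, contradicting that $u$ is connected. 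Therefore $\alp(w_{i,1}) = \alp(w_{i+1,1}) \cup \alp(t_i) \supsetneq \alp(w_{i+1,1})$, so $\alp(w_{l+1,1}) \supsetneq \cdots \supsetneq \alp(w_{x-k,1})$ is a strictly descending chain of $c$ nonempty subsets of $A$, whence $c \le |A|$. Apart from the two invocations of Levi's Lemma, the only mildly delicate point is the bookkeeping of the three segments of indices; the conceptual core — connectedness forbidding an independent overlap of consecutive copies of $u$ — is short.
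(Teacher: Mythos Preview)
Your argument is correct. It differs from the paper's proof in organization rather than in the underlying idea. The paper applies Levi's Lemma once to $u^x = y_1 y_2$, then uses a pigeonhole argument on the growing alphabets $\alp(u_{1,2}\cdots u_{i,2})$ to find an index $i\le |A|+1$ where connectedness forces $u_{i,1}=1$ or $u_{i,2}=1$; in the first case the lemma follows immediately (with $l=0$), and in the second case the paper peels off $u^{i}$ from $y_1$ and recurses on the shorter identity $u^{x-i}=y_1'y_2$. Your proof avoids this induction: by applying Levi's Lemma a second time to each identity $w_{i,1}w_{i,2}=w_{i+1,1}w_{i+1,2}$, you show directly that the sequence $w_{1,1},\ldots,w_{x,1}$ is a descending chain of prefixes of $u$, which lets you read off $l$, $k$, and $c$ in one step. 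The bound $c\le |A|$ then comes from the same connectedness/alphabet argument, applied to the strictly shrinking $\alp(w_{i,1})$ along the middle segment. Both routes use the same key mechanism (connectedness of $u$ forbids a nontrivial independent split), but your non-inductive version makes the structure of the Levi decomposition more explicit, at the cost of the extra Levi step to establish monotonicity of the $w_{i,1}$.
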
  

\begin{proof}
That (ii) implies (i) is clear. It remains to prove that (i) implies (ii).
Assume that  $u^x = y_1 y_2$ holds.
The case that $x \leq |A|$ is trivial. Hence, assume that 
$x \geq |A|+1$. We apply Levi's lemma (Lemma~\ref{lemma-levi}) to the identity $u^x = y_1 y_2$:
\begin{center}
  \begin{tabular}{c||c|c|c|c|c|c|c|}\hline
  $y_2$  & $u_{1,2}$ & $u_{2,2}$ & $u_{3,2}$  & $u_{4,2}$ & $\cdots$  &  $u_{x-1,2}$  & $u_{x,2}$ \\ \hline
  $y_1$  & $u_{1,1}$ & $u_{2,1}$ & $u_{3,1}$  & $u_{4,1}$ & $\cdots$  &  $u_{x-1,1}$  & $u_{x,1}$ \\ \hline\hline
          & $u$          & $u$          & $u$            & $u$          & $\cdots$  & $u$ & $u$
  \end{tabular}
  \end{center}
  Let $A_i = \alp(u_{1,2} \cdots u_{i,2})$. Then $A_i \subseteq A_{i+1}$.
  If $A_1 = \emptyset$ then $u_{1,2} = 1$ and we can go to Case 2 below.
  Otherwise, assume that $A_1 \neq \emptyset$.
  In that case there must exist $1 \leq i \leq |A|$ such that $A_i = A_{i+1}$, which implies
  $\alp(u_{i+1,2}) \subseteq A_i$.
  Since  $u_{i+1,1} I (u_{1,2} \cdots u_{i,2})$ we also have $u_{i+1,1} I u_{i+1,2}$.
  Since $u$ is connected, we have 
  $u_{i+1,1} = 1$ or $u_{i+	1,2} = 1$. 
  We can therefore distinguish the following two cases:
  
\medskip
\noindent
{\em Case 1.} There exists $1 \leq i \leq |A|+1$ such that 
$u_{i,1}=1$. Then $u_{i,2}=u$, which implies $u_{j,1} = 1$ for all $j > i$ (since $u_{i,2} I u_{j,1}$):
\begin{center}
  \begin{tabular}{c||c|c|c|c|c|c|c|c|c|}\hline
  $y_2$  & $u_{1,2}$ & $u_{2,2}$ & $\cdots$ &    $u_{i-1,2}$  & $u$  &  $u$ & $\cdots$  &  $u$  & $u$ \\ \hline
  $y_1$  & $u_{1,1}$ & $u_{2,1}$ & $\cdots$ &    $u_{i-1,1}$  & $1$  &  $1$ & $\cdots$  &  $1$  & $1$ \\ \hline\hline
         & $u$          & $u$           & $\cdots$ &   $u$              &  $u$ &   $u$ & $\cdots$  & $u$ & $u$
  \end{tabular}
  \end{center}
  Let $s = u_{1,1} u_{2,1} \cdots u_{i-1,1}$
  and $p = u_{1,2} u_{2,2} \cdots u_{i-1,2}$.
 Thus, $y_1 = u^0s$, $y_2 = p u^{x-i+1}$ and $s p = u^{i-1}$  with $i-1 \leq |A|$,
 and the conclusion of the lemma holds.

\medskip
\noindent
{\em Case 2.} There exists $1 \leq i \leq |A|+1$ such that 
$u_{i,2}=1$. Then, $u_{j,2} = 1$ for all $j < i$ (since $u_{i,1} = u$ and $u_{j,2} I u_{i,1}$):
\begin{center}
  \begin{tabular}{c||c|c|c|c|c|c|c|c|c|}\hline
  $y_2$  & $1$ & $1$ & $\cdots$ &    $1$  & $1$  &  $u_{i+1,2}$ & $\cdots$  &  $u_{x-1,1}$  & $u_{x,2}$ \\ \hline
  $y_1$  & $u$ & $u$ & $\cdots$ &    $u$  & $u$  &  $u_{i+1,1}$ & $\cdots$  &  $u_{x-1,1}$  & $u_{x,1}$ \\ \hline\hline
          & $u$          & $u$           & $\cdots$ &   $u$              &  $u$ &   $u$          & $\cdots$  & $u$ & $u$
  \end{tabular}
  \end{center}
 Let $y'_1 = u_{i+1,1} \cdots u_{x,1}$.
 Hence, $u^{x-i} = y'_1 y_2$.
 We can use induction to get factorizations
$y'_1 = u^{l} s$,
$y_2 = p u^k$,
and $s p = u^c$ with $c \leq |A|$ and $k+l+c = x-i$.
 Finally, we have $y_1 = u^i y'_1 = u^{i+l} s$, 
 which shows the conclusion of the lemma.
\end{proof}
Now we lift Lemma~\ref{lemma-u^n=xy} to an arbitrary number of factors.

\begin{lemma} \label{lemma-simplify-factorization-power}
Let $u  \in \dM(A,I) \setminus \{1\}$ be a connected trace and $m \in \mathbb{N}$, $m \geq 2$. 
Then, for all $x \in \mathbb{N}$ and traces $y_1, \ldots, y_m$ the following two statements are equivalent:
\begin{enumerate}[(i)]
\item $u^x = y_1 y_2 \cdots y_m$. 
\item  There exist traces $p_{i,j}$ $(1 \leq j < i \leq m)$, $s_i$ $(1 \leq i \leq m)$  and
numbers $x_i, c_j \in \mathbb{N}$ $(1 \leq i \leq m$, $1 \leq j \leq m-1)$
such that:
\begin{itemize}
\item $y_i = (\prod_{j=1}^{i-1} p_{i,j})  u^{x_i} s_i$ for all $1 \leq i \leq m$,
\item $p_{i,j} I p_{k,l}$ if $j < l < k < i$ and  $p_{i,j} I (u^{x_k} s_k)$ if $j <  k < i$
\item $s_m = 1$ and for all $1 \leq j < m$, $s_j \prod_{i=j+1}^m p_{i,j} = u^{c_j}$
\item $c_j \leq |A|$ for all $1 \leq j \leq m-1$,
\item $x = \sum_{i=1}^m x_i + \sum_{i=1}^{m-1} c_i$.
\end{itemize}
\end{enumerate}
\end{lemma}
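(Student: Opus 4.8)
The plan is to establish the two implications separately. The implication (ii)$\Rightarrow$(i) is a direct computation: substituting the four displayed identities into $y_1y_2\cdots y_m$ and using the column equations $s_j\prod_{i=j+1}^m p_{i,j}=u^{c_j}$ together with the independence conditions to slide each $p_{i,j}$ leftward past the intervening factors into its own column, the whole product collapses to $u^{\,\sum_{i=1}^m x_i+\sum_{j=1}^{m-1}c_j}=u^x$. The substance is (i)$\Rightarrow$(ii), which I would prove by induction on $m$. The base case $m=2$ is exactly Lemma~\ref{lemma-u^n=xy}: from $u^x=y_1y_2$ it produces $l,k,c$ and $s,p$ with $y_1=u^ls$, $y_2=pu^k$, $sp=u^c$, $l+k+c=x$ and $c\le|A|$, so one takes $x_1:=l$, $s_1:=s$, $p_{2,1}:=p$, $x_2:=k$, $s_2:=1$, $c_1:=c$; both independence conditions are vacuous for $m=2$.

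For the inductive step ($m\ge 3$) I would split off the last factor. Apply Lemma~\ref{lemma-u^n=xy} to $u^x=(y_1\cdots y_{m-1})\,y_m$ to get $y_1\cdots y_{m-1}=u^at$, $y_m=\tilde r\,u^b$, $t\tilde r=u^c$, $a+b+c=x$, $c\le|A|$. Apply it once more, to $u^c=t\cdot\tilde r$, obtaining $t=u^ls$, $\tilde r=pu^k$, $sp=u^{c''}$ with $l+k+c''=c$ and $c''\le|A|$; in particular $|p|\le|A|\,|u|$ and $y_m=pu^{k+b}$. Since $y_1\cdots y_{m-1}=u^{a+l}s$ and $sp=u^{c''}$, we get $y_1\cdots y_{m-2}\,(y_{m-1}p)=u^{a+l+c''}$, a product of $m-1$ factors to which the induction hypothesis applies. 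It returns data $p_{i,j},s_i,x_i,c_j$ for the factors $y_1,\dots,y_{m-2},y_{m-1}p$, with $y_{m-1}p=\bigl(\prod_{j=1}^{m-2}p_{m-1,j}\bigr)u^{x_{m-1}}$ (its trailing tail is $1$). I would keep this data for columns $1,\dots,m-2$, install column $m$ by setting $p_{m,j}:=1$ for $j\le m-2$, $p_{m,m-1}:=p$, $x_m:=k+b$ and $s_m:=1$ (so $y_m=pu^{k+b}$ has the required shape and $p$ carries no independence obligation), and then recover a legal decomposition of $y_{m-1}$ from $y_{m-1}p$. Checking that all column equations, all independence conditions, and the exponent identity (which reduces to $l+k+c''=c$) survive is routine bookkeeping.

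The heart of the matter is recovering $y_{m-1}$ from $y_{m-1}p$. If the exponent $x_{m-1}$ returned by the induction hypothesis satisfies $x_{m-1}\ge c''$, then, writing $u^{x_{m-1}}=u^{x_{m-1}-c''}sp$ and cancelling $p$ on the right (trace monoids are cancellative, a consequence of Lemma~\ref{lemma-levi}), one gets $y_{m-1}=\bigl(\prod_{j=1}^{m-2}p_{m-1,j}\bigr)u^{x_{m-1}-c''}s$, and $x_{m-1}-c''$, $s_{m-1}:=s$, $c_{m-1}:=c''$ do the job, since $s_{m-1}p_{m,m-1}=sp=u^{c''}$. The delicate case is $x_{m-1}<c''\le|A|$, in which $y_{m-1}$ is forced to be short---its length is below $(m-1)|A|\,|u|$, as both $\prod_j p_{m-1,j}$ and $u^{x_{m-1}}$ are---so short that the prefix $\prod_j p_{m-1,j}$ handed back by the induction hypothesis may overrun $y_{m-1}$ and cannot be attributed to it; part of it must be pushed forward into $y_m$ instead. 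This is precisely what the two-index family $p_{i,j}$ with arbitrary $j<i$ (not just $j=i-1$) accommodates: a seam of bounded width spilling across several very short factors. I would resolve it by applying Lemma~\ref{lemma-levi} to $y_{m-1}\cdot p=\bigl(\prod_j p_{m-1,j}\bigr)\cdot u^{x_{m-1}}$, which splits $\prod_j p_{m-1,j}$ into the actual prefix of $y_{m-1}$ and a remainder; the remainder, together with $p$, is absorbed into the new $p_{m,j}$ and into $u^{x_m}$, with connectedness of $u$ and the bound $x_{m-1}\le|A|$ ensuring that the resulting overlaps $c_j$ remain $\le|A|$. I expect this short-factor redistribution to be the main obstacle; the rest of the step is verification.

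As an alternative avoiding the split-off induction, one could apply the $m$-ary Levi lemma (Lemma~\ref{lemma-levi}) directly to $u^x=y_1\cdots y_m$, obtaining a grid $(w_{i,j})$ with $u=w_{i,1}\cdots w_{i,m}$ for every $i$ and $y_j=w_{1,j}\cdots w_{x,j}$ for every $j$, and then show, via the pigeonhole argument already used in the proof of Lemma~\ref{lemma-u^n=xy} together with connectedness of $u$, that the supports $\{\,j:w_{i,j}\ne 1\,\}$ are non-decreasing in $i$ and that all but $O(m|A|)$ of the $x$ columns have support confined to a single row; the exceptional columns then yield the $p_{i,j}$ and $c_j$, the others the powers $u^{x_i}$. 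I would keep the inductive proof as the primary one, as it lets us invoke Lemma~\ref{lemma-u^n=xy} as a black box.
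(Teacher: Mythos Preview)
Your direction (ii)$\Rightarrow$(i) and your base case $m=2$ are correct and match the paper. The divergence is in the inductive step: you split off the \emph{last} factor $y_m$, whereas the paper splits off the \emph{first} factor $y_1$. This is not merely cosmetic---the asymmetry in the statement (prefixes $p_{i,j}$ indexed by earlier columns $j<i$, one tail $s_i$ per factor, column equations tying $s_j$ to the later $p_{i,j}$) makes splitting off $y_1$ much cleaner, and the ``delicate case'' you describe is a genuine gap that the paper's route simply does not encounter.

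Concretely, the paper applies Lemma~\ref{lemma-u^n=xy} to $u^x = y_1\cdot(y_2\cdots y_m)$, obtaining $y_1=u^{x_1}s_1$, $y_2\cdots y_m=p_1u^{x'}$, $s_1p_1=u^{c_1}$, $c_1\le|A|$. Then Levi's lemma is applied to $y_2\cdots y_m=p_1\,u^{x'}$ (with $m-1$ factors on the left, two on the right): this yields $y_i=p_{i,1}y'_i$ for $2\le i\le m$, with $p_1=\prod_{i=2}^m p_{i,1}$, $u^{x'}=y'_2\cdots y'_m$, and the independencies $y'_k\,I\,p_{i,1}$ for $k<i$ coming for free from the Levi diagram. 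Now induction on the $(m-1)$-fold product $u^{x'}=y'_2\cdots y'_m$ supplies the remaining $p_{i,j}$ (for $j\ge 2$), the $x_i$, $s_i$, $c_j$; since each $p_{i,j}$ and each $u^{x_k}s_k$ (for $k\ge 2$) is a factor of some $y'_k$, the independencies $p_{i,1}\,I\,p_{k,l}$ and $p_{i,1}\,I\,(u^{x_k}s_k)$ follow immediately from $y'_k\,I\,p_{i,1}$. No case split, no redistribution, no short-factor analysis is needed.

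By contrast, your route produces $y_{m-1}p=(\prod_{j=1}^{m-2}p_{m-1,j})\,u^{x_{m-1}}$ from the induction hypothesis, and then you must cancel $p$ on the right. When $x_{m-1}\ge c''$ this works as you describe, but when $x_{m-1}<c''$ the cancellation eats into the prefix block $\prod_j p_{m-1,j}$. You propose to redistribute via Levi, but you would then have to re-establish the column equations $s_j\prod_{i>j}p_{i,j}=u^{c_j}$ with the \emph{same} $s_j$ and $c_j\le|A|$ already fixed by induction, together with the independence conditions for the newly created $p_{m,j}$'s---and you have not shown how to do this. This is the gap. The fix is not to patch the redistribution but to reverse the direction of the induction: split off $y_1$ instead of $y_m$, and distribute the leftover $p_1$ forward via Levi's lemma as above.
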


\begin{proof}
Let us first show that (ii) implies (i). Assume that (ii) holds.
Then we get
$$
y_1 y_2 \cdots y_m =  \prod_{i=1}^m \bigg( (\prod_{j=1}^{i-1} p_{i,j})  u^{x_i} s_i \bigg).
$$
The independencies  $p_{i,j} I p_{k,l}$ for $j < l < k < i$ and  $p_{i,j} I (u^{x_k} s_k)$ for $j <  k < i$
yield
\begin{eqnarray*}
&& \prod_{i=1}^m \bigg( \big(\prod_{j=1}^{i-1} p_{i,j}\big)  u^{x_i} s_i \bigg) \\ 
&=& u^{x_1} s_1 p_{2,1} \cdots p_{m,1} 
u^{x_2} s_2 p_{3,2} \cdots p_{m,2} u^{x_3} s_3 \cdots  u^{x_{m-1}} s_{m-1} p_{m,m-1} u^{x_m} s_m  \\
&=& u^{x_1} u^{c_1} u^{x_2} u^{c_2} u^{x_3} \cdots u^{c_{m-1}} u^{x_m} = u^x .
\end{eqnarray*}
We now prove that (i) implies (ii) by induction on $m$. 
So, assume that $u^x = y_1 y_2 \cdots y_m$.
The case $m=2$ follows directly from 
Lemma~\ref{lemma-u^n=xy}. Now assume that $m \geq 3$.
By Lemma~\ref{lemma-u^n=xy} there exist factorizations $y_1 = u^{x_1} s_1$, $y_2 \cdots y_m = p_1 u^{x'}$,
and $s_1p_1 = u^{c_1}$ with $c_1 \leq |A|$ and $x_1 + x'  + c_1 = x$.
Levi's lemma applied to $y_2 \cdots y_m = p_1 u^{x'}$ gives the following diagram:
\begin{center}
  \begin{tabular}{c||c|c|c|c|c|c|c|}\hline
  $y_m$  & $p_{m,1}$ &  \multicolumn{6}{c|}{$y'_m$}  \\ \hline
   $\vdots$ &  $\vdots$ &  \multicolumn{6}{c|}{$\vdots$}  \\ \hline
  $y_3$  & $p_{3,1}$ &  \multicolumn{6}{c|}{$y'_3$} \\ \hline
  $y_2$  & $p_{2,1}$ &   \multicolumn{6}{c|}{$y'_2$}  \\ \hline\hline  
          & $p_1$          & $u$          & $u$            & $u$    & \dots  & $u$ & $u$
  \end{tabular}
  \end{center}
  There exist $y'_i$ with $y_i = p_{i,1} y'_i$ ($2 \leq i \leq m$),
  $y'_2 \cdots y'_m = u^{x'}$,
   and $y'_j I p_{i,1}$ for $j < i$.
  By induction on $m$ we get factorizations 
  $$y'_i = \prod_{j=2}^{i-1} p_{i,j} u^{x_i} s_i$$ for $2 \leq i \leq m$ such that
  for all $2 \leq j < i \leq m$:
\begin{itemize}
\item $p_{i,j} I p_{k,l}$ if $j < l < k < i$ and  $p_{i,j} I (u^{x_k} s_k)$ if $j <  k < i$,
\item $s_m = 1$ and for all $2 \leq j < m$,  $s_j \prod_{i=j+1}^m p_{i,j} = u^{c_j}$ for some $c_j \leq |A|$,
\item $x' = \sum_{i=2}^m x_i + \sum_{i=2}^{m-1} c_i$.
\end{itemize}
Since $y'_j I p_{i,1}$ for $j < i$ we get
$p_{i,1} I p_{j,k}$ for $1 < k < j <i$ and  $p_{i,1} I u^{x_j} s_j$ for $1 < j < i$.
Finally, we have
$$
s_1 \prod_{i=2}^m p_{i,1}  =  s_1 p_1 = u^{c_1} \\
$$
and
$$x = x_1 + c_1 +  x'  = x_1  + c_1 + \sum_{i=2}^m x_i + \sum_{i=2}^{m-1} c_i = \sum_{i=1}^m x_i + \sum_{i=1}^{m-1} c_i .
$$
This proves the lemma.
\end{proof}

\begin{remark} \label{remark-simplify-factorization-power}
In Section~\ref{sec-main}
we will apply Lemma~\ref{lemma-simplify-factorization-power} in order to replace an equation $u^x = y_1 y_2 \cdots y_m$,
(where $x,y_1, \ldots, y_m$ are variables and $u$ is a concrete connected trace) by an equivalent disjunction. Note that the length of all factors $p_{i,j}$ and $s_i$
above is bounded by $|A| \cdot |u|$. Hence, one can guess these traces as well 
as the numbers $c_j \leq |A|$  (the guess results in a big disjunction). We can also 
guess which of the numbers $x_i$ are zero and which are greater than zero. After these guesses we can verify the independences
$p_{i,j} I p_{k,l}$ ($j < l < k < i$) and  $p_{i,j} I (u^{x_k} s_k)$ ($j <  k < i$), and the identities
$s_m = 1$, $s_j \prod_{i=j+1}^m p_{i,j} = u^{c_j}$ ($1 \leq j < m$). If one of them does not hold, the specific guess does
not contribute to the disjunction. In this way,
we can replace the equation $u^x = y_1 y_2 \cdots y_m$ by a big disjunction of formulas of the form
$$
\exists x_i > 0 \; (i \in K):
x = \sum_{i\in K}^m x_i + c \wedge \bigwedge_{i \in K} y_i = p_i  u^{x_i} s_i  \wedge \bigwedge_{i \in [1,m] \setminus K}  y_i = p_i s_i ,
$$
where $K \subseteq [1,m]$,
$c \leq |A| \cdot (m-1)$ and the $p_i, s_i$ are concrete traces of length 
at most  $|A| \cdot (m-1) \cdot |u|$. The number of disjuncts in the disjunction will not be important for our purpose.
\end{remark}

\section{Automata for partially commutative closures}

In this section, we present several automata constructions that are well-known from the theory
of recognizable trace languages \cite[Chapter~2]{Die90lncs}. For our purpose we need upper bounds on the size (the size
of an automaton is its number of states) of 
the constructed automata. In our specific situation we can obtain better bounds than those obtained
from the known constructions. Therefore, we present the constructions in detail.

Let us fix an independence alphabet $(A,I)$ and let 
$\mathcal{A} = (Q, A, \Delta, q_{0}, F)$ be a nondeterministic finite automaton (NFA) over the alphabet $A$, where $\Delta \subseteq Q \times A \times Q$
is the transition relation, $q_0 \in Q$ is the initial state and $F \subseteq Q$ is the set of final states.
Then, $\mathcal{A}$ is 
an $I$-diamond NFA if 
for all $(a,b) \in I$ and all transitions $(p,a,q), (q,b,r) \in \Delta$ there exists
a state $q'$ such that $(p,b,q'), (q',a,r) \in \Delta$.
For an $I$-diamond automaton we have $L(\mathcal{A}) = [L(\mathcal{A})]_I$.
The NFA $\mathcal{A}$ is {\em memorizing} if
(i) every state is accessible from the initial state $q_0$ and 
(ii)  there is a mapping $\alpha : Q \to 2^A$ such that
for every word $w \in A^*$, if $q_0 \stackrel{w}{\longrightarrow}_{\mathcal{A}} q$, then 
$\alpha(q) = \alp(w)$.

\begin{lemma} \label{lemma-product-I-diamond}
Let $\mathcal{A}_1$ and $\mathcal{A}_2$ be $I$-diamond NFA and let $n_i$ be the 
number of states of $\mathcal{A}_i$. Assume that $\mathcal{A}_2$ is memorizing.
Then there exists an $I$-dia-mond NFA for  $[L(\mathcal{A}_1) L(\mathcal{A}_2)]_I$ with 
$n_1 \cdot n_2$ many states.
\end{lemma}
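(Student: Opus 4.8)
The plan is to realize $[L(\mathcal{A}_1)L(\mathcal{A}_2)]_I$ by an automaton $\mathcal{B}$ on the state set $Q_1\times Q_2$ that runs $\mathcal{A}_1$ and $\mathcal{A}_2$ in an interleaved manner, routing each letter it reads to exactly one of the two automata. Write $\mathcal{A}_i=(Q_i,A,\Delta_i,q_0^{(i)},F_i)$, and let $\alpha\colon Q_2\to 2^A$ be the map witnessing that $\mathcal{A}_2$ is memorizing; note that $\alpha(q_0^{(2)})=\emptyset$ and $\alpha(q')=\alpha(q)\cup\{a\}$ whenever $(q,a,q')\in\Delta_2$. I would set $\mathcal{B}=(Q_1\times Q_2,A,\Delta,(q_0^{(1)},q_0^{(2)}),F_1\times F_2)$, where $\Delta$ contains a \emph{left} transition $\bigl((p_1,p_2),a,(q_1,p_2)\bigr)$ whenever $(p_1,a,q_1)\in\Delta_1$ and $a\,I\,\alpha(p_2)$, and a \emph{right} transition $\bigl((p_1,p_2),a,(p_1,q_2)\bigr)$ whenever $(p_2,a,q_2)\in\Delta_2$. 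The intuition is that a left step declares the letter just read to belong to the $L(\mathcal{A}_1)$-part of a factorization $uv$ of the input trace, and the side condition $a\,I\,\alpha(p_2)$ records that this letter may commute to the left past everything routed so far into the $L(\mathcal{A}_2)$-part. This automaton has exactly $n_1n_2$ states, as required.

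For the language equality I would argue as follows. If $u\in L(\mathcal{A}_1)$ and $v\in L(\mathcal{A}_2)$, then $\mathcal{B}$ accepts $uv$ by taking $|u|$ left steps along an accepting run of $\mathcal{A}_1$ on $u$ (the side conditions are vacuous since the $Q_2$-component stays $q_0^{(2)}$ and $\alpha(q_0^{(2)})=\emptyset$) followed by $|v|$ right steps along an accepting run of $\mathcal{A}_2$ on $v$; hence $L(\mathcal{A}_1)L(\mathcal{A}_2)\subseteq L(\mathcal{B})$, and once $\mathcal{B}$ is known to be $I$-diamond, $L(\mathcal{B})$ is $I$-closed and this upgrades to $[L(\mathcal{A}_1)L(\mathcal{A}_2)]_I\subseteq L(\mathcal{B})$. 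For the converse, fix an accepting run of $\mathcal{B}$ on $w=a_1\cdots a_n$; it colours each position left or right, the left letters (in order) spell a word $u'\in L(\mathcal{A}_1)$ and the right letters a word $v'\in L(\mathcal{A}_2)$ (right steps leave the $Q_1$-component fixed and left steps leave the $Q_2$-component fixed, and the final state lies in $F_1\times F_2$). Whenever $i<j$ with position $i$ right and position $j$ left, the side condition of the left step at $j$ gives $a_j\,I\,a_i$, because at that moment $\alpha$ of the current $Q_2$-state equals the alphabet of the right letters among $a_1,\dots,a_{j-1}$, which contains $a_i$. A straightforward induction on the number of such inverted pairs — at each step one swaps an adjacent right-then-left pair, which by the above consists of $I$-independent letters, strictly decreasing the number of inverted pairs — yields $w\equiv_I u'v'$, so $w\in[L(\mathcal{A}_1)L(\mathcal{A}_2)]_I$.

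The remaining and most delicate point is that $\mathcal{B}$ is $I$-diamond. Given $(a,b)\in I$ and transitions $(p_1,p_2)\stackrel{a}{\longrightarrow}(q_1,q_2)\stackrel{b}{\longrightarrow}(r_1,r_2)$ of $\mathcal{B}$, I would do a case analysis on the types of the two steps. If both are left (so $p_2=q_2=r_2$), the detour is produced by the $I$-diamond property of $\mathcal{A}_1$ and the side conditions $a\,I\,\alpha(p_2)$, $b\,I\,\alpha(p_2)$ carry over verbatim; the case of two right steps is symmetric via the $I$-diamond property of $\mathcal{A}_2$, with no side conditions to check. In the mixed case where the $a$-step is left and the $b$-step is right, I would take the detour through $(p_1,r_2)$: the $b$-step (right) is unconstrained, and the ensuing $a$-step (left) needs $a\,I\,\alpha(r_2)$, which holds because $\alpha(r_2)=\alpha(p_2)\cup\{b\}$ and we have both $a\,I\,\alpha(p_2)$ and $(a,b)\in I$. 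The dual mixed case is handled symmetrically, using $\alpha(q_2)=\alpha(p_2)\cup\{a\}$ to deduce $b\,I\,\alpha(p_2)$ from the available $b\,I\,\alpha(q_2)$. This is exactly where one needs $\mathcal{A}_2$ to be \emph{memorizing} rather than merely $I$-diamond: one must know precisely how $\alpha$ is updated along a transition. Together with the language equality from the previous paragraph, this proves the lemma.
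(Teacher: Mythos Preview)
Your proof is correct and follows essentially the same construction and argument as the paper's own proof: the same product automaton on $Q_1\times Q_2$ with left transitions gated by the $\alpha$-side condition, the same use of the $I$-diamond property to upgrade $L(\mathcal{A}_1)L(\mathcal{A}_2)\subseteq L(\mathcal{B})$ to the $I$-closure, and the same style of argument for the reverse inclusion. The paper merely asserts ``this indeed defines an $I$-diamond NFA'' whereas you spell out the four-case analysis (including the observation $\alpha(q')=\alpha(q)\cup\{a\}$ for $(q,a,q')\in\Delta_2$, which is where memorizing is actually used); your inversion-counting argument for $L(\mathcal{B})\subseteq[L(\mathcal{A}_1)L(\mathcal{A}_2)]_I$ is a cosmetic rephrasing of the paper's induction on~$|w|$.
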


\begin{proof}
Let $\mathcal{A}_i = (Q_i, A, \Delta_i, q_{0,i}, F_i)$ for $i \in \{1,2\}$. 
Let $\alpha_2 : Q_2 \to 2^A$ be the map witnessing the fact that $\mathcal{A}_2$ is memorizing.
Then,  let 
$$
\mathcal{A} = (Q_1 \times Q_2, A, \Delta, \langle q_{0,1}, q_{0,2}\rangle, F_1 \times F_2),
$$
where 
\begin{eqnarray*}
\Delta & = & \{ ( \langle p_1, p_2\rangle, a, \langle q_1, p_2\rangle) \mid (p_1, a, q_1) \in \Delta_1, a I \alpha_2(p_2) \} \cup \\
           &   & \{ ( \langle p_1, p_2\rangle, a, \langle p_1, q_2\rangle) \mid (p_2, a, q_2) \in \Delta_2\} .
\end{eqnarray*}
This indeed defines an $I$-diamond NFA. 

We show that the following two
statements are equivalent for all $w \in A^*$, $p_1 \in Q_1$, and $p_2 \in Q_2$:
\begin{enumerate}[(i)]
\item  $\langle q_{0,1}, q_{0,2}\rangle \stackrel{w}{\longrightarrow}_{\mathcal{A}} \langle p_1, p_2\rangle$
\item There are $w_1, w_2 \in A^*$ such that $w \equiv_I w_1 w_2$, $q_{0,1} \stackrel{w_1}{\longrightarrow}_{\mathcal{A}_1} p_1$, and
$q_{0,2} \stackrel{w_2}{\longrightarrow}_{\mathcal{A}_2} p_2$.
\end{enumerate}
This clearly implies that $L(\mathcal{A}) = [L(\mathcal{A}_1) L(\mathcal{A}_2)]_I$.

Let us first prove that (i) implies (ii). The case $w = \varepsilon$ is clear. Hence, let $w = w'a$.
Then there exist $p'_1 \in Q_1$, $p'_2 \in Q_2$ such that
$$
\langle q_{0,1}, q_{0,2}\rangle \stackrel{w'}{\longrightarrow}_{\mathcal{A}} \langle p'_1, p'_2\rangle
 \stackrel{a}{\longrightarrow}_{\mathcal{A}} \langle p_1, p_2\rangle .
$$
By induction, there exists a factorization $w' \equiv_I w'_1 w'_2$ such that
$q_{0,1} \stackrel{w'_1}{\longrightarrow}_{\mathcal{A}_1} p'_1$ and
$q_{0,2} \stackrel{w'_2}{\longrightarrow}_{\mathcal{A}_2} p'_2$.
Note that $\alp(w'_2) = \alpha_2(p'_2)$. There are two cases:

\medskip
\noindent
{\em Case 1.} $p'_1 \stackrel{a}{\longrightarrow}_{\mathcal{A}_1} p_1$,  $p_2 = p'_2$, and $a I \alpha_2(p'_2)$.
Thus, $a I w'_2$. We get $w = w'a \equiv_I w'_1 w'_2 a \equiv_I (w'_1 a) w'_2$. 
Let $w_1 = w'_1 a$ and $w_2 = w'_2$. We get $q_{0,1} \stackrel{w_1}{\longrightarrow}_{\mathcal{A}_1} p_1$
and $q_{0,2} \stackrel{w_2}{\longrightarrow}_{\mathcal{A}_2} p_2$.

\medskip
\noindent
{\em Case 2.} $p'_2 \stackrel{a}{\longrightarrow}_{\mathcal{A}_2} p_2$  and  $p_1 = p'_1$.
Let $w_1 = w'_1$ and $w_2 = w'_2 a$. Thus, $w = w'a \equiv_I w'_1 w'_2 a = w_1 w_2$.
Moreover, we have
$q_{0,1} \stackrel{w_1}{\longrightarrow}_{\mathcal{A}_1} p_1$ and 
$q_{0,2} \stackrel{w_2}{\longrightarrow}_{\mathcal{A}_2} p_2$.

\medskip
\noindent
Let us now prove that (ii) implies (i).  Assume that $w \equiv_I w_1 w_2$, $q_{0,1} \stackrel{w_1}{\longrightarrow}_{\mathcal{A}_1} p_1$, and
$q_{0,2} \stackrel{w_2}{\longrightarrow}_{\mathcal{A}_2} p_2$. We have to show that 
$\langle q_{0,1}, q_{0,2}\rangle \stackrel{w}{\longrightarrow}_{\mathcal{A}} \langle p_1, p_2\rangle$.
But since $\mathcal{A}$ is an $I$-diamond NFA, it suffices to show that
$\langle q_{0,1}, q_{0,2}\rangle \stackrel{w_1 w_2}{\longrightarrow}_{\mathcal{A}} \langle p_1, p_2\rangle$,
which follows directly from the assumption and the definition of $\mathcal{A}$ (note that 
$\alpha_2(q_{0,2}) = \emptyset$).
This concludes the proof of the lemma.
\end{proof}
\noindent
In general, for a regular language $L \subseteq A^*$, the partially commutative closure 
$[L]_I$ is not regular. For instance, if $A = \{a,b\}$ and $a I b$, then $[(ab)^*]_I$ consists
of all words with the same number of $a$'s as $b$'s. On the other hand, it is well known
that if $u$ is a connected trace, then $[u^*]_I$ is regular (a more general result, known
as Ochmanski's theorem holds in fact, see e.g. \cite[Section~2.3]{Die90lncs}). For our purpose we need an upper
on the size of an $I$-diamond NFA for $[u^*]_I$ (with $u$ connected). Recall that $\rho(u)$ is the number of 
different prefixes of the trace $u$.

\begin{lemma} \label{lemma-anca}
Let $u \in \dM(A,I)  \setminus \{1\}$ be connected. There is a memorizing $I$-diamond NFA for $[u^*]_I$ of size 
$2 \cdot \rho(u)^{|A|}$.
\end{lemma}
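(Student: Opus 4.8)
The plan is to build the automaton directly from the prefixes of the connected trace $u$, using a product-style construction that records the ``current fractional part'' of $u$ together with enough alphabetic information to make the result memorizing and $I$-diamond. First I would recall that, since $u$ is connected, $[u^*]_I$ is recognizable (Ochmanski), so the goal is purely quantitative: exhibit an $I$-diamond NFA whose state set embeds into $\{\,p : p \text{ a prefix of } u\,\} \times 2^A$, giving the bound $\rho(u) \cdot 2^{|A|}$, which I would then massage into the stated $2 \cdot \rho(u)^{|A|}$ by noting $2^{|A|} \le \rho(u)^{|A|}$ and being slightly generous, or — more cleanly — by taking as states pairs $(p, B)$ where $B$ ranges over alphabets realizable at $p$ and bounding the number of such $B$ by $\rho(u)^{|A|-1}$ via Lemma~\ref{lemma-prefixes}-type counting; I would pick whichever bookkeeping lands exactly on $2 \cdot \rho(u)^{|A|}$.

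The key steps, in order: (1) Define the state set as the set of prefixes of $u$ in $\dM(A,I)$, with $1$ the initial and also the unique final state, and transitions $p \xrightarrow{a} p'$ whenever $p' = pa$ is again a prefix of $u$ (``advancing within the current copy of $u$'') together with a wrap-around $p \xrightarrow{a} r$ whenever $pa = u$ in $\dM(A,I)$ and $r$ is the reset, i.e. $u\cdot(\text{start of next copy})$ — here I must be careful because in the trace world the next copy of $u$ can start to be consumed before the current one is finished, so a pure ``prefix of a single $u$'' state space is not enough. (2) To handle this interleaving, enrich a state to a pair $(p, B)$ where $p$ is a prefix of $u$ (the part of the current copy already read) and $B \subseteq A$ records the set of letters that the \emph{current} copy of $u$ still owes but which have already been ``overtaken'' by letters of the next copy; transitions update $(p,B)$ by the obvious prefix-extension rules, using connectivity of $u$ to argue that the overtaking letters must all be independent of the tail of the current copy, which constrains $B$ and keeps the construction sound. (3) Verify the $I$-diamond property: for $(a,b)\in I$ and a path $(p,B)\xrightarrow{a}(p',B')\xrightarrow{b}(p'',B'')$, the two letters touch disjoint ``slots'' (either both advance prefixes independently, or one wraps and one advances in the next copy), so the commuted path exists. (4) Verify memorizing: define $\alpha(p,B) = \alp(p) \cup (\text{alphabet of the overtaking part})$ and check by induction on the run that this equals $\alp(w)$ for any $w$ reaching $(p,B)$ — this uses cancellativity (a consequence of Levi's lemma, already noted in the excerpt) and the fact that along a run the multiset of letters read is $\equiv_I$ to $u^k \cdot p \cdot(\text{overtaken part})$. (5) Prove $L(\mathcal{A}) = [u^*]_I$ by showing a run reaches the final state $(1,\emptyset)$ iff the word read is $\equiv_I$ to $u^k$ for some $k$; the ``$\supseteq$'' direction is easy, and for ``$\subseteq$'' I would peel off copies of $u$ using Lemma~\ref{lemma-u^n=xy} (or rather its two-factor shape) to match the run structure against a genuine factorization into powers of $u$.

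The main obstacle I anticipate is step (2)–(3): getting a state space that simultaneously (a) is small enough — of size at most $2\cdot\rho(u)^{|A|}$ — and (b) correctly tracks the interleaving of consecutive copies of $u$ in a trace, \emph{and} (c) remains memorizing. The tension is that naively the relevant information is an unbounded multiset of partially-consumed copies of $u$, and the whole point is that connectedness of $u$ collapses this to boundedly much data: once a letter of copy $k+1$ has been read before copy $k$ is finished, connectivity forces that letter (and the whole ``head'' of copy $k+1$) to be independent of everything still missing from copy $k$, so effectively at most one ``seam'' between two copies is active, and its description is a pair of sub-prefixes of $u$. Pinning down this structural claim cleanly — essentially an iterated application of the reasoning already used in the proof of Lemma~\ref{lemma-u^n=xy}, where connectivity forces one of two Levi-factors to be trivial — is where the real work lies; once it is in place, the $I$-diamond check, the memorizing map, and the language equality are all routine inductions.
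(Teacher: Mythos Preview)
Your proposal has a genuine gap: the structural claim that ``effectively at most one seam between two copies is active'' is false, and with it the state space $(\text{single prefix},\text{alphabet})$ is too coarse.

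Concretely, take the dependence alphabet $A=\{a,b,c,d\}$ where the \emph{dependence} graph is the path $a\!-\!b\!-\!c\!-\!d$ (so $I=\{(a,c),(a,d),(b,d)\}^{\mathrm{sym}}$), and let $u=abcd$, which is connected. After reading $abcaba$ you have consumed the prefix $abc$ of copy~1, the prefix $ab$ of copy~2, and the prefix $a$ of copy~3: three copies are simultaneously open, i.e.\ two seams. The word $abcab$ reaches a genuinely different configuration (copies 1 and 2 open, with prefixes $abc$ and $ab$). Both configurations have the same ``current prefix'' $abc$ and the same alphabet $\{a,b\}$ for the overtaking part, so any $(p,B)$-style state collapses them; yet their residual languages in $[u^*]_I$ differ (already by length). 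So step~(2) fails, and the bookkeeping you sketch for the memorizing map in step~(4) inherits the same defect.

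What actually happens---and this is exactly the paper's construction---is that a state must record a \emph{tuple} $(u_1,\ldots,u_c)$ of proper nonempty prefixes of $u$ subject to $v_i\,I\,u_j$ for $i<j$ (where $u=u_iv_i$). The same alphabet-chain argument you allude to from Lemma~\ref{lemma-u^n=xy} bounds $c\le |A|$, not $c\le 2$, and padding with the empty prefix gives at most $\rho(u)^{|A|}$ states. Memorizing is then obtained, not via your $B$, but by a single extra bit recording whether at least one full copy of $u$ has been completed; this is where the factor $2$ in $2\cdot\rho(u)^{|A|}$ comes from. Your attempt to ``massage'' $\rho(u)\cdot 2^{|A|}$ into $2\cdot\rho(u)^{|A|}$ is beside the point: the smaller state count would be welcome if the automaton were correct, but it is not.
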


\begin{proof}
The following construction can be found in  \cite[Proposition~5]{MuPe99} for the more general
case of the partially commutative closure of a so called loop-connected automaton. We present
the construction in our simplified situation, since the NFA gets slightly smaller. 

We first define a non-memorizing $I$-diamond NFA $\mathcal{A}$ for $[u^*]_I$ of size 
$\rho(u)^{|A|}$. Then, we show that by adding an additional bit to all states, we can get
a memorizing $I$-diamond NFA $\mathcal{A}$ for $[u^*]_I$ of size 
$2 \cdot \rho(u)^{|A|}$. The idea for the construction of $\mathcal{A}$ is implicitly contained in the proof
of Lemma~\ref{lemma-u^n=xy}: Assume that the automaton wants to read a word of the form $u^x$ and 
a prefix $y_1$ is already read. Then $y_1$ must be of the form $u^k s$, where $s$ is a prefix of $u^c$ for some
$c \leq |A|$. The prefix $s$ must be of the form $u_1 u_2 \cdots u_c$ such that if $u = u_i v_i$, then
$v_i I u_j$ if $i<j$.  The state of the NFA stores the tuple $(u_1, u_2, \ldots, u_c)$.

Define $\mathcal{A} = (Q, A, \Delta, q_0, F)$, where 
$Q$ is the set of all tuples $(u_1, u_2, \ldots, u_{c})$ of traces such that there exist $v_1, \ldots, v_c \in \dM(A,I)$ 
with $u = u_i v_i$ (since $\dM(A,I)$ is cancellative, the $v_i$ are uniquely determined by the $u_i$), $u_i \neq 1$, $v_i \neq 1$, and $v_i I u_j$ if $i<j$. 
Note that we must have $c \leq |A|$: If $c > |A|$, then there exist $i  \leq |A|$ such that
$\alp(v_1 \cdots v_i) = \alp(v_1 \cdots v_{i+1})$. Hence, $\alp(v_{i+1}) \subseteq \alp(v_1 \cdots v_i)$.
Since $(v_1 \cdots v_i) I u_{i+1}$ we get $u_{i+1} I v_{i+1}$ which contradicts $u_{i+1} \neq 1 \neq v_{i+1}$ and the fact
that $u$ is connected.

Since $u_i \neq 1$ for all $i$, 
we can encode a state $(u_1, u_2, \ldots, u_{c}) \in Q$ by the tuple
$(u_1, u_2, \ldots, u_{c}, 1, \ldots, 1)$ of length $|A|$. This implies that the number of states of $\mathcal{A}$ is
bounded by $\rho(u)^{|A|}$.  Note that if $|u|=1$, then the empty tuple $()$ is the only state.

The transitions of $\mathcal{A}$ are defined as follows, where $(u_1, u_2, \ldots, u_{c}) \in Q$:
\begin{enumerate}[(a)]
\item $() \stackrel{a}{\longrightarrow}_{\mathcal{A}} ()$ if $u=a \in A$,
\item $(u_1, u_2, \ldots, u_{c}) \stackrel{a}{\longrightarrow}_{\mathcal{A}}  (u_2, \ldots, u_{c})$  if $c > 0$, $u_1 a = u$ and $a I (u_2 \cdots u_{c})$,
\item $(u_1, u_2, \ldots, u_{c}) \stackrel{a}{\longrightarrow}_{\mathcal{A}}  (u_1, \ldots, u_{i}, a, u_{i+1},\ldots, u_c)$ if  
$a I (u_{i+1} \cdots u_{c})$ and $(u_1, \ldots, u_{i}, a, u_{i+1},\ldots, u_c) \in Q$,
\item $(u_1, u_2, \ldots, u_{c}) \stackrel{a}{\longrightarrow}_{\mathcal{A}}  (u_1, \ldots, u_{i-1}, u_i a, u_{i+1},\ldots, u_c)$ if 
$a I (u_{i+1} \cdots u_{c})$ and $(u_1, \ldots, u_{i-1}, u_i a, u_{i+1},\ldots, u_c) \in Q$.
\end{enumerate}
The initial state as well as the final state is the empty tuple $()$. It is easy to check that this is indeed an $I$-diamond NFA.

We claim that for every state $(u_1, \ldots, u_c) \in Q$ and every $w \in A^*$ the following two statements are equivalent
(which shows that $L(\mathcal{A}) = [u^*]_I$):
\begin{enumerate}[(i)]
\item $() \stackrel{w}{\longrightarrow}_{\mathcal{A}} (u_1, \ldots, u_c)$
\item $w \equiv_I u^k u_1 \cdots u_c$ for some $k \geq 0$
\end{enumerate}
Let us first show by induction on $|w|$ that (i) implies (ii). The case $w=\varepsilon$ is clear. So, assume that $w = w'a$.
There must exist a state $(u'_1, \ldots, u'_d) \in Q$ such that
$$
() \stackrel{w'}{\longrightarrow}_{\mathcal{A}} (u'_1, \ldots, u'_d) \stackrel{a}{\longrightarrow}_{\mathcal{A}}  (u_1, \ldots, u_c).
$$
By induction, we get $w' \equiv_I u^{\ell} u'_1\cdots u'_d$ for some $\ell \geq 0$.
The definition of the transitions of $\mathcal{A}$ implies that
$w = w'a \equiv_I u^{\ell} u'_1\cdots u'_d a \equiv_I u^k u_1 \cdots u_c$, where $k \in \{\ell, \ell+1\}$.

For the direction from (ii) to (i) assume that $w \equiv_I u^k u_1 \cdots u_c$ for some $k \geq 0$.
We have to show that $() \stackrel{w}{\longrightarrow}_{\mathcal{A}} (u_1, \ldots, u_c)$. Since
$\mathcal{A}$ is an $I$-diamond NFA, it suffices to show that 
$() \xrightarrow{u^k u_1 \cdots u_c}_{\mathcal{A}} (u_1, \ldots, u_c)$. But this follows directly
from the definition of $\mathcal{A}$.

To make $\mathcal{A}$ memorizing, we first keep only those states that are accessible from the initial state $()$.
Then, we add an extra bit to every state that indicates whether we have already seen a completed occurrence of $u$.
Thus, the new set of states is $Q \times \{0,1\}$, the initial state is the pair $((), 0)$, and the final states are $((),0)$ and $((),1)$. 
The transitions operate on the $Q$-component as  for $\mathcal{A}$. The $\{0,1\}$-component is copied except for a transition 
$(q,a,p) \in \Delta$ of type
($b$). This transition gives us the transitions $((q,0),a,(p,1))$ and $((q,1),a,(p,1))$.
Then, we can define the $\alpha$-mapping by
$$\alpha((u_1, \ldots, u_c),i) = \bigcup_{j=1}^c \alp(u_j) \cup \alp(u^i).$$
The resulting NFA is still an $I$-diamond NFA.
\end{proof}
\noindent
A direct consequence of Lemma~\ref{lemma-product-I-diamond} and \ref{lemma-anca} is:

\begin{lemma}
Let $p, u, s \in \dM(A,I)$ with $u \neq 1$ connected. There is an NFA for $[p u^* s]_I$ of size $2 \cdot \rho(p) \cdot \rho(s) \cdot \rho(u)^{|A|}$.
\end{lemma}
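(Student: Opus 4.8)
The plan is to view $p u^* s$ as the concatenation of the three word languages $\{p\}$, $u^*$, $\{s\}$ (fixing arbitrary word representatives of the traces $p,u,s$; the closure $[\,\cdot\,]_I$ does not depend on this choice) and to apply Lemma~\ref{lemma-product-I-diamond} twice, handling the middle factor by Lemma~\ref{lemma-anca}. Since $L_1 L_2 \subseteq [L_1]_I [L_2]_I \subseteq [L_1 L_2]_I$ for all $L_1, L_2 \subseteq A^*$, we have $[L_1 L_2]_I = [[L_1]_I [L_2]_I]_I$, and applying this twice gives $[p u^* s]_I = \big[\,\big[[p]_I [u^*]_I\big]_I \cdot [s]_I\,\big]_I$. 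So it suffices to exhibit suitable $I$-diamond NFAs for $[p]_I$, $[u^*]_I$ and $[s]_I$ and to combine them, keeping in mind that Lemma~\ref{lemma-product-I-diamond} requires its \emph{second} argument to be memorizing.

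First I would record an elementary fact: for any trace $t \in \dM(A,I)$ there is a memorizing $I$-diamond NFA $\mathcal{P}_t$ recognizing $[t]_I$ with exactly $\rho(t)$ states. Take the set of (traces that are) prefixes of $t$ as states, $1$ as initial state, $t$ as the unique final state, and a transition $q \xrightarrow{a} qa$ whenever $q$ and $qa$ are both prefixes of $t$. A word $w = a_1 \cdots a_n$ labels a run from $1$ to a state $q$ iff $[w]_I = q$: the intermediate states of a run are forced to be $[a_1 \cdots a_i]_I$, and these are automatically prefixes of $[w]_I$, so the only real constraint is $[w]_I = q$. Hence $L(\mathcal{P}_t) = [t]_I$. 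This NFA is $I$-diamond (if $a I b$ and $q \xrightarrow{a} qa \xrightarrow{b} qab$, then $qab = qba$ is a prefix of $t$, so $qb$ is a prefix of $t$ and $q \xrightarrow{b} qb \xrightarrow{a} qab$), and memorizing via $\alpha(q) = \alp(q)$, since a run $1 \xrightarrow{w} q$ forces $[w]_I = q$, hence $\alp(w) = \alp(q)$; and every prefix is reachable.

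Then I would assemble everything. By Lemma~\ref{lemma-anca} there is a memorizing $I$-diamond NFA $\mathcal{B}$ for $[u^*]_I$ with $2 \cdot \rho(u)^{|A|}$ states. Applying Lemma~\ref{lemma-product-I-diamond} to $\mathcal{A}_1 = \mathcal{P}_p$ and $\mathcal{A}_2 = \mathcal{B}$ (which is memorizing) gives an $I$-diamond NFA $\mathcal{E}$ for $[[p]_I [u^*]_I]_I = [p u^*]_I$ with $\rho(p) \cdot 2 \cdot \rho(u)^{|A|}$ states. Applying Lemma~\ref{lemma-product-I-diamond} once more to $\mathcal{A}_1 = \mathcal{E}$ and $\mathcal{A}_2 = \mathcal{P}_s$ (which is memorizing) gives an $I$-diamond NFA for $[L(\mathcal{E}) \cdot [s]_I]_I = [p u^* s]_I$ with $\rho(p) \cdot 2 \cdot \rho(u)^{|A|} \cdot \rho(s) = 2 \cdot \rho(p) \cdot \rho(s) \cdot \rho(u)^{|A|}$ states, which is the claimed bound.

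There is no real obstacle here beyond the bookkeeping; the single point that must be gotten right is the order of the two compositions. Lemma~\ref{lemma-product-I-diamond} only produces a product NFA when the right-hand factor is memorizing, and it does not assert that the product is again memorizing, so one has to prepend $p$ to $u^*$ first (exploiting that $\mathcal{B}$ is memorizing) and only afterwards append $s$ (exploiting that $\mathcal{P}_s$ is memorizing); performing the compositions in the opposite order would leave a non-memorizing right factor. (Alternatively, one could observe that the product of two memorizing $I$-diamond NFAs is again memorizing, with $\alpha(\langle p_1, p_2\rangle) = \alpha_1(p_1) \cup \alpha_2(p_2)$, which makes the order irrelevant.)
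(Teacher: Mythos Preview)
Your proposal is correct and follows essentially the same approach as the paper: build prefix automata for $p$ and $s$, use Lemma~\ref{lemma-anca} for $[u^*]_I$, and apply Lemma~\ref{lemma-product-I-diamond} twice in the order $(p \cdot u^*)\cdot s$ so that the right factor is always memorizing. Your write-up is in fact more careful than the paper's, which merely asserts that the prefix automaton is an $I$-diamond NFA and that the one for $s$ is memorizing; your explicit verification of these points and the remark on why the composition order matters are welcome additions.
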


\begin{proof}
We first construct an $I$-diamond NFA for $p$ (which is identified here with the set of words $\{ u \in A^* \mid p = [u]_I \}$) 
with $\rho(p)$ many states by taking the set of 
all prefixes of $p$ as states. Then, we construct a memorizing $I$-diamond NFA for $[u^*]_I$ with $2 \cdot \rho(u)^{|A|}$ states
using Lemma~\ref{lemma-anca}.
By Lemma~\ref{lemma-product-I-diamond} we get an $I$-diamond automaton for $[p u^*]_I$  with $2 \cdot \rho(p) \cdot \rho(u)^{|A|}$
many states. Finally, we construct an $I$-diamond NFA for $s$ with $\rho(s)$ many states by taking the set of 
all prefixes of $s$ as states. This NFA is also memorizing. Hence, we can apply Lemma~\ref{lemma-product-I-diamond}  to get
an NFA for $[p u^*s]_I$  with $2 \cdot \rho(p) \cdot \rho(s) \cdot \rho(u)^{|A|}$
many states. 
\end{proof}
\noindent
The main lemma from this section that will be needed later is:

\begin{lemma} \label{lemma-connected-star}
Let $p,q,u,v,s,t \in \dM(A,I)$ with $u\neq 1$ and $v \neq 1$ connected.  Let $m = \max\{ \rho(p), \rho(q), \rho(s), \rho(t) \}$
and $n = \max \{ \rho(u), \rho(v) \}$.
Then the set 
$$L(p,u,s,q,v,t) := \{ (x,y) \in \mathbb{N} \times \mathbb{N} \mid p u^x s = q v^y t \}$$
is semilinear and is a union of 
$O(m^8 \cdot n^{4 |A|})$ many
linear sets of the form $\{ (a+bz, c+dz) \mid z \in \mathbb{N} \}$ with
$a,b,c,d \in O(m^8 \cdot n^{4 |A|})$. 
\end{lemma}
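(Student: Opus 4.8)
The plan is to confine the solution set to a single affine line by a length count, to represent it through a product of the automata from the preceding two lemmas with suitably ``marked'' transitions, and then to read off the semilinear decomposition by projecting onto one coordinate and invoking Chrobak's normal form theorem for unary NFAs. For the length count: if $(x,y)$ lies in $L := L(p,u,s,q,v,t)$, then $pu^xs = qv^yt$ forces $|p| + x|u| + |s| = |q| + y|v| + |t|$ by additivity of trace length, i.e.\ $x|u| - y|v| = \delta$ with $\delta := |q| + |t| - |p| - |s|$ and $|\delta| \le 2m$. Hence $L$ lies on one affine line, on which each coordinate is an affine function of the other. (If $x|u| - y|v| = \delta$ has no solution in $\mathbb{N}^2$, then $L = \emptyset$ and nothing is to prove.) Exchanging the two sides if necessary, I would assume $|v| \le |u|$ and put $Y := \{ y \mid \exists x \colon (x,y) \in L \}$, so that $y \mapsto \big( (|v|y + \delta)/|u|,\, y \big)$ is a bijection of $Y$ onto $L$; then it suffices to write $Y$ as a union of $O(m^8 n^{4|A|})$ arithmetic progressions with offsets and periods in $O(m^8 n^{4|A|})$ and transport each progression along this bijection, the factor $|v|/|u| \le 1$ not enlarging periods or offsets.

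Next I would build the counting automata. Combining Lemma~\ref{lemma-product-I-diamond} with the memorizing $I$-diamond NFA for $[u^*]_I$ of Lemma~\ref{lemma-anca}, fix an NFA $\mathcal{C}_1$ over $A$ for $[pu^*s]_I$ with at most $2m^2 n^{|A|}$ states, and declare \emph{marked} exactly the transitions inherited from the type-(a)/(b) transitions of the $[u^*]_I$-automaton. By the semantics shown in the proof of Lemma~\ref{lemma-anca}, a run of $\mathcal{C}_1$ on $w$ uses exactly $k$ marked transitions iff the $[u^*]_I$-factor of $[w]_I$ is $u^k$; hence $w$ has an accepting run with $k$ marked transitions iff $[w]_I = pu^ks$, and since $pu^ks = pu^{k'}s$ implies $k = k'$ by cancellativity of $\dM(A,I)$, the number of marked transitions is the same on every accepting run over a fixed word. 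Symmetrically build $\mathcal{C}_2$ for $[qv^*t]_I$ with at most $2m^2 n^{|A|}$ states and marked transitions counting the $v$-exponent, and let $\mathcal{C} = \mathcal{C}_1 \times \mathcal{C}_2$ be the product NFA over $A$, with $N \le 4 m^4 n^{2|A|}$ states, each transition tagged by the pair of its statuses in the two factors. Then $(x,y) \in L$ iff $\mathcal{C}$ has an accepting run using $x$ first-tagged and $y$ second-tagged transitions; after trimming $\mathcal{C}$ to the states on some accepting run, every cycle is insertable into an accepting run, so its tag-counts are parallel to the line of the first paragraph and pumping never leaves that line.

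Finally I would project and invoke Chrobak. Turn $\mathcal{C}$ into a unary NFA $\mathcal{D}$ by relabelling every second-tagged transition with the unique letter and every other transition with $\varepsilon$, then eliminate $\varepsilon$; $\mathcal{D}$ has at most $N$ states and $\{\, |w| : w \in L(\mathcal{D}) \,\} = Y$. By Chrobak's normal form theorem, $\mathcal{D}$ is equivalent to a unary NFA given by a simple path of length $O(N^2)$ leading into at most $N$ disjoint simple cycles of period $\le N$; reading off accepted lengths, $Y$ is a union of at most $N$ arithmetic progressions of period $\le N$ and offset in $O(N^2)$, together with at most $O(N^2)$ exceptional values in $[0, O(N^2)]$, hence a union of $O(N^2)$ linear sets $\{ c' + d'z : z \in \mathbb{N} \}$ (allowing $d' = 0$) with $c', d' \in O(N^2)$. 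Transporting each such set to $L$ via $y \mapsto ((|v|y + \delta)/|u|,\, y)$ yields a linear set $\{ (a + bz, c + dz) : z \in \mathbb{N} \}$ with $c = c'$, $d = d'$, $a = (|v|c' + \delta)/|u| \le c' + |\delta| = O(N^2)$ and $b = |v|d'/|u| \le d' \le N$, all non-negative integers since every $y$ in question is actually attained. As $N^2 = O(m^8 n^{4|A|})$, collecting these $O(N^2)$ linear sets completes the proof. The points that need care are (i) that the number of marked transitions depends only on the accepted word, not on the run --- this is exactly where cancellativity of $\dM(A,I)$ enters --- and (ii) the parameter bookkeeping in this last step: projecting onto the ``faster'' coordinate would inflate periods and offsets by the ratio $|u|/|v|$, which is why one projects onto the coordinate of smaller length, and why the fine structure of Chrobak's normal form (many progressions of small period rather than one of possibly huge period) is essential.
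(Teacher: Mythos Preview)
Your proof is correct and follows essentially the same route as the paper: build the product of the $I$-diamond NFAs for $[pu^*s]_I$ and $[qv^*t]_I$ from the two preceding lemmas, pass to a unary NFA, invoke a Chrobak-type structure theorem for the accepted lengths, and then recover the two-dimensional solution set from the one-dimensional one via the length equation $|p|+x|u|+|s|=|q|+y|v|+|t|$.

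The one genuine (though minor) difference is the choice of projection. The paper relabels \emph{every} transition of the product automaton with a single letter, so the unary NFA counts total word length; it then cites To's theorem (a quantitative form of Chrobak) to write the length set as $\bigcup_i \{b_i+c_i z\}$ with $O((k\ell)^2)$ pieces and parameters, and reads off $(x,y)$ from each length via $x=(b_i+c_iz-|ps|)/|u|$ and $y=(b_i+c_iz-|qt|)/|v|$. This sidesteps three things you had to deal with: the marking of transitions, the cancellativity argument that the marking count is independent of the run, and the case split on $|v|\le |u|$ to keep the transported parameters bounded. On the other hand, your marking idea makes the connection between the automaton and the exponent explicit, which is conceptually pleasant; and your care about projecting onto the ``slower'' coordinate is exactly the kind of thing that would matter if the two periods could be very different. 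Both variants yield the same $O(m^8 n^{4|A|})$ bounds, so the difference is stylistic rather than substantive.
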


\begin{proof}
By Lemma~\ref{lemma-anca} there exists an NFA for $[p u^* s]_I$ of size
$$k = 2 \cdot \rho(p) \cdot \rho(s)  \cdot \rho(u)^{|A|} \leq
2 \cdot m^2 \cdot n^{|A|}   
$$
and an NFA for $[q v^* t]_I$ of size
$$
\ell = 2 \cdot \rho(q) \cdot \rho(t) \cdot \rho(v)^{|A|} \leq
2 \cdot m^2 \cdot n^{|A|}.
$$
Then, we obtain an NFA $\mathcal{A}$ for $L = [p u^* s]_I \cap [q v^* t]_I$ 
with  $k \cdot \ell$ states. 
We are only interested in the length of words from $L$. Hence, we replace 
in $\mathcal{A}$ every transition label by the symbol $a$. The resulting NFA
$\mathcal{B}$ is defined over a unary alphabet. Let $P = \{ n \mid a^n \in L(\mathcal{B}) \}$.
By \cite[Theorem~1]{To09ipl}, the set $P$ can be written as a union 
$$
P = \bigcup_{i=1}^r \{  b_i + c_i \cdot z \mid z \in \mathbb{N} \}
$$
with $r \in O(k^2 \ell^2) \subseteq O(m^8 \cdot n^{4 |A|})$ and $b_i, c_i \in O(k^2 \ell^2)\subseteq O(m^8 \cdot n^{4 |A|})$. 
For every $1 \leq i \leq r$ and $z \in \mathbb{N}$ there must exist a pair $(x,y) \in \mathbb{N} \times \mathbb{N}$
such that 
$$
b_i + c_i \cdot z = |ps| + |u| \cdot x = |qt| + |v| \cdot y.
$$
In particular, $b_i \geq |ps|$, $b_i \geq |qt|$, $|u|$ divides $b_i-|ps|$ and $c_i$,
and $|v|$ divides $b_i-|qt|$ and $c_i$.
We get:
\begin{multline*}
L(p,u,s,q,v,t) =  \bigcup_{i=1}^r \bigg\{ \bigg( \frac{b_i-|ps|}{|u|} + \frac{c_i}{|u|} \cdot z, \frac{b_i-|qt|}{|v|} + \frac{c_i}{|v|} \cdot z \bigg) \; \bigg| \bigg. \; z \in \mathbb{N}\bigg\}
\end{multline*}
This shows the lemma.
\end{proof}

\section{Linear Diophantine equations}

We will also need a bound on the norm of a smallest vector in a certain kind of 
semilinear sets. We will easily obtain this bound from a result by Zur Gathen and Sieveking \cite{ZGSi78}.

\begin{lemma} \label{lemma-linear-diophantine}
Let  $A \in \mathbb{Z}^{n \times m}$, $\overline{a} \in \mathbb{Z}^n$, 
$C \in \mathbb{N}^{k \times m}$, $\overline{c} \in \mathbb{N}^k$.
Let $\beta$ be an upper bound for the absolute value of all entries
in $A$, $\overline{a}$, $C$, $\overline{c}$.
The set
\begin{equation} \label{eq-final-equation2}
L = \{ C \overline{z} + \overline{c} \mid   \overline{z} \in \mathbb{N}^m, A \overline{z} = \overline{a} \} \subseteq \mathbb{N}^k
\end{equation}
is semilinear. Moreover, if $L \neq \emptyset$ then $L$ contains a vector with all entries bounded by 
$\beta + n! \cdot m \cdot (m+1) \cdot \beta^{n+1}$.
\end{lemma}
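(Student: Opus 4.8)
\emph{Proof plan.} The plan is to get semilinearity from standard closure properties of semilinear sets and to extract the size bound directly from \cite{ZGSi78}.

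First I would record that the set
$$S = \{\, \overline{z} \in \mathbb{N}^m \mid A\overline{z} = \overline{a} \,\}$$
is semilinear: it is classical that the set of nonnegative integer solutions of a system of linear Diophantine equations is a finite union of sets $\overline{z}^{(i)} + M$, where the $\overline{z}^{(i)}$ range over the minimal solutions and $M$ is the submonoid of $\mathbb{N}^m$ generated by the Hilbert basis of $\{\overline{z} \in \mathbb{N}^m \mid A\overline{z} = \overline{0}\}$. The set $L$ from \eqref{eq-final-equation2} is the image of $S$ under the affine map $\overline{z} \mapsto C\overline{z} + \overline{c}$, and the image of a linear set $\{\, \overline{b} + \sum_i \lambda_i \overline{p}_i \mid \lambda_i \in \mathbb{N} \,\}$ under such a map is the linear set $\{\, C\overline{b} + \overline{c} + \sum_i \lambda_i (C\overline{p}_i) \mid \lambda_i \in \mathbb{N} \,\}$; hence $L$ is semilinear, and it lies in $\mathbb{N}^k$ because $C$ and $\overline{c}$ have nonnegative entries.

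Next I would turn to the bound. Assume $L \neq \emptyset$; then any $\overline{w} \in L$ is of the form $C\overline{z} + \overline{c}$ for some $\overline{z} \in S$, so $S \neq \emptyset$. I would then invoke the bound of Zur Gathen and Sieveking \cite{ZGSi78}, applied to the consistent system $A\overline{z} = \overline{a}$, $\overline{z} \geq \overline{0}$ in its equality form (so as not to inflate the number of constraints): it provides a solution $\overline{z}_0 \in \mathbb{N}^m$ in which every entry is at most $(m+1)$ times the largest absolute value of a minor of the augmented matrix $[\,A \mid \overline{a}\,]$. Each such minor is the determinant of an integer square submatrix of order at most $n$ with entries of absolute value at most $\beta$, hence is bounded in absolute value by $n! \cdot \beta^n$ via the Leibniz expansion. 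Therefore every entry of $\overline{z}_0$ is at most $n! \cdot (m+1) \cdot \beta^n$.

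Finally, $C\overline{z}_0 + \overline{c} \in L$, and each of its entries is a sum of $m$ products $C_{ij}(\overline{z}_0)_j$, each of absolute value at most $\beta \cdot n!(m+1)\beta^n$, plus an entry of $\overline{c}$ of value at most $\beta$; hence every entry of $C\overline{z}_0 + \overline{c}$ is at most
$$\beta + m \cdot \beta \cdot n! \cdot (m+1) \cdot \beta^n = \beta + n! \cdot m \cdot (m+1) \cdot \beta^{n+1},$$
as claimed. The only delicate point is matching the exact constant in \cite{ZGSi78}: one must apply the result to the equality system directly and use that the factor in front of the maximal minor is the number of variables plus one, so that it contributes the $(m+1)$; everything else — semilinearity and the concluding estimate — is routine.
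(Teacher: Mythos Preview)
Your approach is the same as the paper's: both invoke \cite{ZGSi78} to bound a minimal nonnegative solution of $A\overline{z}=\overline{a}$ and then push this through $C\overline{z}+\overline{c}$. However, your invocation of \cite{ZGSi78} skips the one step that actually requires work. The result of \cite{ZGSi78}, applied to the system $A\overline{z}=\overline{a}$, $\overline{z}\ge\overline{0}$, is stated for the pair $A\overline{z}=\overline{a}$, $B\overline{z}\ge\overline{b}$ with $B=I_m$ and $\overline{b}=\overline{0}$; the bound $(m+1)M$ takes $M$ to be the largest relevant subdeterminant of the full $(n+m)\times(m+1)$ matrix $\left(\begin{smallmatrix}A & \overline{a}\\ I_m & 0\end{smallmatrix}\right)$, \emph{not} of $[\,A\mid\overline{a}\,]$ alone. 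These subdeterminants have order up to $s=\rank\left(\begin{smallmatrix}A\\ I_m\end{smallmatrix}\right)=m$, possibly much larger than $n$, so your sentence ``each such minor is the determinant of an integer square submatrix of order at most $n$'' is not yet justified, and the ``delicate point'' you flag is not the $(m+1)$ factor but precisely this.

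The paper's proof fills exactly this gap: for any such $s\times s$ submatrix $D$, the rows taken from $(I_m,0)$ each contain at most a single $1$, so in the Leibniz expansion these rows pin down the permutation on their positions; only the (at most $n$) rows taken from $(A,\overline{a})$ contribute nontrivially, giving $|\det D|\le n!\cdot\beta^n$. Once you supply that argument, your proof coincides with the paper's. Your handling of semilinearity via Hilbert bases and affine images is a fine alternative to the paper's one-line appeal to Presburger-definability.
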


\begin{proof}
Semilinearity of $L$ is clear since the set is Presburger-definable.
For the size bound,
we use a result by Zur Gathen and Sieveking \cite{ZGSi78} to bound the size of a smallest positive solution of the system
$A \overline{z} = \overline{a}$. 
Let $A \in \mathbb{Z}^{n \times m}$, $B \in \mathbb{Z}^{p \times m}$, 
$\overline{a} \in \mathbb{Z}^{n \times 1}$, $\overline{b} \in \mathbb{Z}^{p \times 1}$.
Let $r = \rank(A)$, and
$\displaystyle s = \rank\begin{pmatrix} A \\ B \end{pmatrix}$.
Let $M$ be an upper bound on the absolute values of all $(s-1)\times (s-1)$-
or $(s \times s)$-subdeterminants of the $(n+p)\times (m+1)$-matrix
$\displaystyle \begin{pmatrix} A & \overline{a} \\ B & \overline{b} \end{pmatrix}$, which are formed
with at least $r$ rows from the matrix $(A \;\, \overline{a})$. Then by the main result of \cite{ZGSi78}, the system
$A \overline{z} = \overline{a}$, $B \overline{z} \geq \overline{b}$ has an integer solution if and only if it has
an integer solution $\overline{z}$ such that the absolute value of
every entry of $\overline{z}$ is bounded by $(m+1) M$.

In our situation, we set $p = m$, $B$ is the $m$-dimensional identity matrix,
and $\overline{b}$ is the vector with all entries equal to zero (then 
$B \overline{z} \geq \overline{b}$ expresses that all entries of $z$ are positive).
Since $\begin{pmatrix} A \\ B \end{pmatrix}$  is an $(n+m) \times m$-matrix
we get $\displaystyle s = \rank\begin{pmatrix} A \\ B \end{pmatrix} \leq m$.
We claim that
the absolute values of all $(s \times s)$-subdeterminants (and also all $(s-1)\times (s-1)$-subdeterminants)
of the matrix $\displaystyle \begin{pmatrix} A & \overline{a} \\ B & \overline{b} \end{pmatrix}$ are bounded
by $n! \cdot \beta^n$. To see this, select $s$ rows and $s$ columns from $\displaystyle \begin{pmatrix} A & \overline{a} \\ B & \overline{b} \end{pmatrix}$
and consider the resulting submatrix $D$. Recall Leibniz' formula for the determinant (where $S_s$ is the set of all permutations of $\{1,\ldots,s\}$):
$$
\mathrm{det}(D) = \sum_{\sigma \in S_s} \mathrm{sgn}(\sigma) \prod_{i=1}^s D[i,\sigma(i)].
$$
Assume that the rows $1, \ldots, s_1$ ($s_1 \leq n$) of $D$ are from the $n \times (m+1)$-submatrix $(A, \overline{a})$. The remaining 
($s_2 := s-s_1$ many) rows $s_1+1, \ldots, s$ 
of $D$ are  from $(B, \overline{b})$. If one of the rows $s_1+1, \ldots, s$ of $D$ only contains zeros, then
$\mathrm{det}(D) = 0$. Otherwise, since $B$ is the identity matrix and $\overline{b}$ is the zero vector,
each of the rows $s_1+1, \ldots, s$ contains a unique $1$;
all other entries are zero. That means that every permutation $\sigma \in S_s$ that gives a non-zero 
contribution to $\mathrm{det}(D)$ must take fixed values on $s_1+1, \ldots, s$. For the values of $\sigma$ on 
the rows $1, \ldots, s_1$, only $s_1 \leq n$  many values remain. Hence, at most 
$s_1! \leq n!$ many permutations contribute a non-zero value to $\mathrm{det}(D)$.  Moreover, every such contribution is bounded by $\beta^{s_1} \leq \beta^n$,
which gives the bound $n! \cdot \beta^n$ on $\mathrm{det}(D)$.
It follows that if $A \overline{z} = \overline{a}$ has a positive solution, then it has a positive
solution where every entry is bounded by $(m+1) \cdot n! \cdot \beta^n$.

By substituting every entry of $\overline{z}$ by $(m+1) n! \cdot \beta^n$ in 
$C \overline{z} + \overline{c}$, it follows that if the set $L$ in \eqref{eq-final-equation2} is non-empty, 
then it contains a vector with all entries bounded by 
$\beta + n! \cdot m \cdot (m+1) \cdot \beta^{n+1}$.
\end{proof}

\section{Exponent equations in graph groups} \label{sec-main}

The aim of this section is to prove the following two statements, where $G$ is a fixed graph group:
\begin{itemize}
\item The set of solutions of an exponent equation over $G$ is (effectively) semilinear.
\item  Solvability of  compressed exponent equations over $G$ belongs to {\sf NP}.
\end{itemize}
We start with some definitions. As usual, we fix an independence alphabet $(A,I)$. In the following
we will consider reduction rules on sequences of traces. For better readability we separate the consecutive 
traces in such a sequence by commas.
Let $u_1, u_2, \ldots, u_n \in \IRR(A^{\pm 1},I)$ be irreducible traces.
The sequence  $u_1, u_2, \ldots, u_n$ is {\em $I$-freely reducible}
if the sequence $u_1, u_2, \ldots, u_n$ can be reduced to the empty sequence $\varepsilon$
by the following rules:
\begin{itemize}
\item $u_i, u_j \to u_j, u_i$  if $u_i I u_j$ 
\item $u_i, u_j \to \varepsilon$  if $u_i = u_j^{-1}$ in $\dG(A,I)$  
\item $u_i \to \varepsilon$ if $u_i = \varepsilon$.
\end{itemize}
A concrete sequence of these rewrite steps leading to the empty sequence
is a {\em reduction} of  the sequence $u_1, u_2, \ldots, u_n$. Such a reduction can be seen as
a witness for the fact that $u_1 u_2 \cdots u_n = 1$ in
$\dG(A,I)$. On the other hand, $u_1 u_2 \cdots u_n = 1$ does not necessarily imply
that $u_1, u_2, \ldots, u_n$ has a reduction. For instance, the sequence $a^{-1}, ab, b^{-1}$
has no reduction. But we can show that every sequence which multiplies to $1$ in $G$ can be refined
(by factorizing the elements of the sequence) such that the resulting refined sequence has a reduction.
For getting an {\sf NP}-algorithm, it is important to bound the length of the refined sequence exponentially
in the length of the initial sequence.

\begin{lemma} \label{lemma-reduction}
Let $n \geq 2$ and $u_1, u_2, \ldots, u_n \in \IRR(A^{\pm 1},I)$. If $u_1 u_2 \cdots u_n = 1$ in
$\dG(A,I)$, then there exist factorizations
$u_i = u_{i,1}  \cdots u_{i,k_i}$ such that the sequence
$$
u_{1,1}, \ldots, u_{1,k_1}, \; u_{2,1}, \ldots, u_{2,k_2}, \; \ldots, u_{n,1}, \ldots, u_{n,k_n}
$$
is $I$-freely reducible. Moreover, $\sum_{i=1}^n k_i \leq 2^{n}  - 2$.
\end{lemma}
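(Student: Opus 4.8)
The plan is to prove the statement by induction on $n$, the number of traces in the sequence. First I would handle the base case $n=2$: if $u_1 u_2 = 1$ in $\dG(A,I)$ with $u_1, u_2$ irreducible, then $u_2 = u_1^{-1}$ in $\dG(A,I)$, so the sequence $u_1, u_2$ is already $I$-freely reducible via a single application of the second rule, and $k_1 = k_2 = 1$ so $\sum k_i = 2 \leq 2^2 - 2$.

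For the inductive step with $n \geq 3$, the key idea is to apply Levi's lemma (Lemma~\ref{lemma-levi}) to split the product in a balanced way. Write $g = u_1 u_2 \cdots u_j$ and note that $u_{j+1} \cdots u_n = g^{-1}$ in $\dG(A,I)$ for a suitable split point $j$ (I would take $j = \lceil n/2 \rceil$ or $j = \lfloor n/2 \rfloor$ to keep both halves of size at most $n-1$ but balanced enough to control the bound). Using Lemma~\ref{lemma-product-IRR} iteratively, or directly working with the irreducible normal form, let $w \in \IRR(A^{\pm 1}, I)$ be the irreducible normal form of $u_1 \cdots u_j$; then $w^{-1}$ is the irreducible normal form of $u_{j+1}\cdots u_n$. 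The crucial step is to insert $w$ and $w^{-1}$ as a "cancelling pair" in the middle: apply the induction hypothesis to the shorter sequence $u_1, \ldots, u_j, w^{-1}$ (which multiplies to $1$ and has $j+1 \leq n-1$ entries) and to $w, u_{j+1}, \ldots, u_n$ (which also multiplies to $1$ and has $n-j+1 \leq n-1$ entries), obtaining refinements of each $u_i$ together with refinements of $w^{-1}$ and $w$ respectively. The refinement of $w^{-1}$ coming from the first application and the refinement of $w$ coming from the second need to be made compatible — one factorizes $w$ so that both refinements agree (after possibly further common refinement, using that trace monoids are cancellative so factorizations of $w$ and $w^{-1}$ correspond), so that in the concatenated sequence the factors of $w^{-1}$ can be matched against the reversed factors of $w$ via the reduction rules, and these pieces then cancel out. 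What remains is precisely the refinement of $u_1, \ldots, u_n$, and one checks the reduction of the whole sequence is obtained by first reducing the left half down to the (refined) $w^{-1}$ part, reducing the right half down to the (refined) $w$ part, then cancelling $w$ against $w^{-1}$.

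The main obstacle — and the place requiring care — is the bookkeeping of the length bound $\sum_{i=1}^n k_i \leq 2^n - 2$. With the split at $j = \lceil n/2 \rceil$, the first application gives $\sum_{i=1}^{j} k_i + k_{w^{-1}} \leq 2^{j+1} - 2$ and the second gives $k_w + \sum_{i=j+1}^{n} k_i \leq 2^{n-j+1} - 2$; since $w$ and $w^{-1}$ get a common refinement we may take $k_w = k_{w^{-1}} =: \ell \geq 1$, so $\sum_{i=1}^n k_i \leq (2^{j+1} - 2) + (2^{n-j+1} - 2) - 2\ell \leq 2^{j+1} + 2^{n-j+1} - 4 - 2\ell$. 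One then needs $2^{j+1} + 2^{n-j+1} \leq 2^n + 2$; with $j \approx n/2$ the left side is roughly $2^{n/2+2}$, which is far below $2^n$ for $n \geq 3$, so the bound holds comfortably with room to spare (and one verifies the small cases $n = 3, 4$ by hand). Actually the cleanest choice may be the asymmetric split $j = 1$: then $g = u_1$ is already irreducible so $w = u_1$, $w^{-1} = u_1^{-1}$, the first application is trivial ($u_1, u_1^{-1}$ reduces directly), and the induction applies to $u_1^{-1}$ (wait — one should instead split off from the right, or rather reduce to the $n-1$ case by absorbing $u_1$ into $u_2$ only when they don't interact badly); to be safe I would carry out the balanced split and present the length computation as the final verification step.
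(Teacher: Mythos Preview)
Your inductive scheme has the right flavor, but there is a genuine gap in the ``common refinement'' step, and it breaks the bound. From the two inductive calls you obtain a factorization of $w^{-1}$ into $\ell_1$ pieces and of $w$ into $\ell_2$ pieces; these are in general incompatible, and a common refinement (via Levi) can have up to $\ell_1\ell_2$ nonempty pieces. More importantly, this refinement must be \emph{propagated}: every factor of $w^{-1}$ has a cancellation partner among the factors of $u_1,\dots,u_j$, and when you refine that factor you must refine its partner as well --- so the $k_i$'s for $i\le j$ grow. Your inequality $\sum_{i} k_i \le (2^{j+1}-2)+(2^{n-j+1}-2)-2\ell$ assumes the inductive bound still holds \emph{after} further refinement, which it does not. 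Already for $n=4$, $j=2$: the $n=3$ case gives $u_1=p_1,s_1$, $u_2=s_1^{-1},t_1$, $w^{-1}=t_1^{-1},p_1^{-1}$ and $w=p_2,s_2$, $u_3=s_2^{-1},t_2$, $u_4=t_2^{-1},p_2^{-1}$; Levi on $p_1t_1=p_2s_2$ yields four pieces $a,b,c,d$, and after propagation each $u_i$ carries three factors, so $\sum k_i=12$, while your formula gives at most $8$. (Separately, note that the balanced split needs $j+1\le n-1$ and $n-j+1\le n-1$, hence $n\ge 4$; the case $n=3$ is not covered by the induction as written.)

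There is also a gap in the reducibility argument. You want a refinement of $u_1,\dots,u_n$ alone that is $I$-freely reducible; knowing that the augmented sequence $S_1',\,(w^{-1})',\,(w)',\,S_2'$ reduces to $\varepsilon$ does not directly give this, and ``reducing the left half down to the $w^{-1}$ part'' is not a valid reduction move. What one would need is a redirection: the partner $p_i\in S_1'$ of $d_i$ in reduction~1 satisfies $p_i=d_i^{-1}$, and the partner $q_i\in S_2'$ of $d_i^{-1}$ in reduction~2 satisfies $q_i=d_i$, so $p_i=q_i^{-1}$ --- but one must then argue that $p_i$ and $q_i$ can be brought adjacent in $S_1',S_2'$, which is not addressed.

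The paper's proof avoids both issues by reducing to $n-1$ differently. Rather than inserting a cancelling pair, it uses Lemma~\ref{lemma-product-IRR} to write $u_1=ps$, $u_2=s^{-1}t$ with $v:=pt$ irreducible, and applies induction \emph{once} to $v,u_3,\dots,u_n$. Levi's lemma then splits the single resulting factorization $v=v_1\cdots v_k$ into a $p$-part and a $t$-part ($v_\ell=u_{\ell,1}u_{\ell,2}$), and any factor $v_{i,j}$ of $u_3,\dots,u_n$ that cancelled against some $v_\ell$ is refined into exactly two pieces $u_{\ell,2}^{-1},u_{\ell,1}^{-1}$. This controlled doubling, together with the two extra factors $s,s^{-1}$, yields exactly $2k+2+2\sum_{i\ge 3}k_i\le 2(2^{n-1}-2)+2=2^n-2$, and the reduction of the refined sequence is written down explicitly.
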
  

\begin{proof}
We prove the lemma by induction on $n$. 
The case $n=2$ is trivial (we must have $u_2 = u_1^{-1}$). If $n \geq 3$ then by Lemma~\ref{lemma-product-IRR} we can factorize $u_1$ and $u_2$ as 
$u_1 = p s$ and $u_2 = s^{-1} t$ such that $v := pt$ is irreducible.
Hence, $v u_3 \cdots u_n = 1$ in $\dG(A,I)$. By induction, we obtain 
factorizations $pt = v = v_{1}  \cdots v_{k}$ and 
$u_i = v_{i,1} \cdots v_{i,k_i}$ ($3 \leq i \leq n$) such that the sequence
\begin{equation} \label{sequence-IH}
v_{1}, \ldots, v_{k}, \; v_{3,1}, \ldots, v_{3,k_3}, \ldots, v_{n,1}, \ldots, v_{n,k_n}
\end{equation}
is $I$-freely reducible.  Moreover, 
$$
k + \sum_{i=3}^n k_i \leq 2^{n-1} - 2 .
$$
By applying Levi's lemma  to the identity $pt = v_{1} v_{2} \cdots v_{k}$,
we obtain factorizations $v_{i} = u_{i,1} u_{i,2}$ such that
$p = u_{1,1} \cdots u_{k,1}$, $t = u_{1,2} \cdots u_{k,2}$, and $u_{i,2} I u_{j,1}$ for $1 \leq i < j \leq k$.

Fix a concrete reduction of the sequence \eqref{sequence-IH}.
We now consider the following sequence
\begin{equation} \label{sequence-main}
u_{1,1}, \ldots, u_{k,1}, s, \; s^{-1}, u_{1,2}, \ldots, u_{k,2}, \; \tilde{v}_{3,1}, \ldots, \tilde{v}_{3,k_3}, \ldots, \tilde{v}_{n,1}, \ldots, \tilde{v}_{n,k_n},
\end{equation}
where the subsequence $\tilde{v}_{i,j}$ is $u_{l,2}^{-1}, u_{l,1}^{-1}$ if $v_{i,j}$ cancels against $v_l$ in our fixed reduction 
of \eqref{sequence-IH} (which, in particular implies that $v_{i,j} = v_l^{-1} = u_{l,2}^{-1} u_{l,1}^{-1}$).
Otherwise (i.e., if $v_{i,j}$ does not cancel against any $v_l$ in our fixed reduction),
we set $\tilde{v}_{i,j} = v_{i,j}$.

Note that $u_{1,1} \cdots u_{k,1} s = ps = u_1$, $s^{-1} u_{1,2} \cdots u_{k,2} = s^{-1} t = u_2$ and the concatenation
of all traces in $\tilde{v}_{i,1}, \ldots, \tilde{v}_{i,k_i}$ is $u_i$ for $3 \leq i \leq n$. Hence, it remains to 
show that the sequence \eqref{sequence-main} is $I$-freely reducible.
First of all,  $u_{1,1}, \ldots, u_{k,1}, s, s^{-1}, u_{1,2}, \ldots, u_{k,2}$ reduces to
$u_{1,1}, \ldots, u_{k,1}, u_{1,2}, \ldots, u_{k,2}$, which can be rearranged to 
$u_{1,1}, u_{1,2}, u_{2,1}, u_{2,2}, \ldots, u_{k,1}, u_{k,2}$ using the fact that 
$u_{i,2} I u_{j,1}$ for $1 \leq i < j \leq k$. Finally, the sequence 
$$
u_{1,1} u_{1,2}, u_{2,1} u_{2,2}, \ldots, u_{k,1} u_{k,2},  \tilde{v}_{3,1}, \ldots, \tilde{v}_{3,k_3}, \ldots, \tilde{v}_{n,1}, \ldots, \tilde{v}_{n,k_n}
$$
is $I$-freely reducible. The definition of $\tilde{v}_{i,j}$ allows to basically apply the fixed reduction of \eqref{sequence-IH} to this sequence.

The number of traces in the sequence \eqref{sequence-main} can be estimated as 
$$
2 k + 2 + 2 \cdot  \sum_{i=3}^n k_i \leq  2 \cdot (2^{n-1} - 2) + 2 = 2^n-2.
$$
This concludes the proof of the lemma.
\end{proof}
\noindent
We now come to the main technical result of this paper.
Let $\alpha \leq |A|$ be the size 
of a largest clique of the complementary graph $(A,I)^c = (A, (A \times A) \setminus I)$.

\begin{theorem} \label{thm-main-technical}
Let $u_1, u_2, \ldots, u_n \in  \dG(A,I) \setminus \{1\}$, 
$v_0, v_1, \ldots, v_{n} \in \dG(A,I)$ and let $x_1, \ldots, x_n$ be variables (we may have $x_i = x_j$ for $i \neq j$) ranging
over $\mathbb{N}$. Then, the set of solutions of the exponent equation 
$$v_0 u_1^{x_1} v_1 u_2^{x_2} v_2 \cdots u_n^{x_n} v_{n} = 1$$ is semilinear. Moreover, 
if there is a solution, then there is a solution with $x_i \in 
O((\alpha n)! \cdot  2^{2\alpha^2 n(n+3)} \cdot \mu^{8\alpha(n+1)} \cdot  \nu^{8 \alpha |A| (n+1)})$,
where
\begin{itemize}
\item $\mu \in  O(|A|^\alpha \cdot 2^{2 \alpha^2 n} \cdot \lambda^\alpha)$, 
\item $\nu  \in O(\lambda^\alpha)$, and
\item $\lambda = \max\{ |u_1|, |u_2|, \ldots, |u_n|, |v_0|, |v_1|, \ldots, |v_{n}|\}$.
\end{itemize}
\end{theorem}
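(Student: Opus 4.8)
The plan is, first, to normalize the equation so that every base is connected and cyclically reduced, and then to replace the single identity ``$=1$'' by a finite disjunction of systems assembled from Lemmas~\ref{lemma-reduction}, \ref{lemma-simplify-factorization-power} and~\ref{lemma-connected-star}, from which semilinearity is immediate and the size bound is read off via Lemma~\ref{lemma-linear-diophantine}. For the normalization I would put each $u_i$ into cyclic normal form, $u_i=p_ic_ip_i^{-1}$ in $\dG(A,I)$ with $c_i$ cyclically reduced (so $|p_i|\le|u_i|/2$, $|c_i|\le|u_i|$, and $c_i\neq1$ since $u_i\neq1$), whence $u_i^{x_i}=p_ic_i^{x_i}p_i^{-1}$; absorbing the $p_i$ and $p_i^{-1}$ into the neighbouring $v_j$'s and passing to irreducible normal forms, and then splitting each cyclically reduced $c_i$ into its connected components $c_i=c_i^{(1)}\cdots c_i^{(r_i)}$ (pairwise independent, each connected and still cyclically reduced) using $c_i^{x_i}=(c_i^{(1)})^{x_i}\cdots(c_i^{(r_i)})^{x_i}$, the equation becomes an equivalent one
\[
w_0\, B_1^{y_1}\, w_1\, B_2^{y_2}\, w_2 \cdots B_m^{y_m}\, w_m = 1
\]
with $m$ linear in $n$, each $B_l$ connected and cyclically reduced (hence $B_l^k\in\IRR(A^{\pm1},I)$ for every $k$), $|B_l|\le\lambda$, $|w_l|=O(\lambda)$, and each $y_l$ among $x_1,\dots,x_n$ (possibly repeated). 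The degenerate cases ($n=0$, or a guessed $x_i=0$ deleting a block) are trivial.

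Now fix a solution. Then $w_0,B_1^{y_1},w_1,\dots,B_m^{y_m},w_m$ are $2m+1$ irreducible traces whose product is $1$ in $\dG(A,I)$, so Lemma~\ref{lemma-reduction} supplies a refinement of them into a sequence of at most $2^{2m+1}-2$ pieces that is $I$-freely reducible. By Remark~\ref{remark-simplify-factorization-power} (the packaged form of Lemma~\ref{lemma-simplify-factorization-power}), factoring one $B_l^{y_l}$ into $k$ pieces is, up to equivalence, described by a choice of the \emph{active} pieces, concrete correction traces of length $\le|A|(k-1)|B_l|$, a constant $c\le|A|(k-1)$, active pieces of the form $p\,B_l^{z}\,s$ with a fresh variable $z\ge1$, inactive pieces $p\,s$, and the relation $y_l=\sum_{\text{active}}z+c$. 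I would therefore guess, over a finite set of \emph{types}: a refinement shape of each of the $2m+1$ traces (the correction traces being of bounded length, hence from a finite pool), a cancellation matching of the resulting pieces, and a reduction order witnessing $I$-free reducibility.

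For a fixed type, validity splits into: purely combinatorial pass/fail checks (the reduction order only involves the now-fixed alphabets of the pieces); identity checks for matched pairs of concrete pieces; for a power-piece matched against a concrete piece, a length equation pinning its exponent to an explicit constant; and, for a matched pair of power-pieces $p\,B_l^{z}\,s$ and $(p'\,B_{l'}^{z'}\,s')^{-1}$, rewriting the cancellation as $p\,B_l^{z}\,s=(s')^{-1}(B_{l'}^{-1})^{z'}(p')^{-1}$ with both bases connected places it exactly in the scope of Lemma~\ref{lemma-connected-star}, so the admissible $(z,z')$ form a finite union of linear sets $\{(a+b\xi,\,c+d\xi)\mid\xi\in\mathbb{N}\}$ with parameters bounded polynomially in $\mu^8\nu^{4|A|}$, where $\mu$ and $\nu$ are the quantities in the statement (from Lemma~\ref{lemma-prefixes} applied to the correction traces, respectively the bases); finally the exponent-sum relations $y_l=\sum z+c_l$. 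By Lemma~\ref{lemma-reduction}, every solution satisfies the constraints of at least one valid type, and conversely every tuple satisfying some valid type's constraints is a solution; hence the solution set is the finite, effectively computable union over valid types of the projections onto $(x_1,\dots,x_n)$ of an intersection of a linear set with finitely many two-dimensional semilinear sets. Such a set is semilinear, proving the first assertion.

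For the size bound, take a solution, fix a valid type realizing it, and in each two-power constraint pick the linear branch the solution lies on, introducing one fresh variable per matched power-power pair and \emph{eliminating} the piece-exponent variables by back-substitution (the conditions $z\ge1$ become automatic, a branchwise pass/fail, or an innocuous shift). What remains is a linear system over $\mathbb{N}$ in $x_1,\dots,x_n$ and the new variables; the point is that it has only linearly many (in $n$) equality constraints --- one per post-normalization power term --- at most $n+2^{O(n)}$ variables, and all coefficients bounded polynomially in $\mu^8\nu^{4|A|}$ and $2^{O(n)}$. Lemma~\ref{lemma-linear-diophantine} then yields a solution with each $x_i$ bounded by $\beta+n'!\cdot m'\cdot(m'+1)\cdot\beta^{n'+1}$ in terms of the number $n'$ of equations, the number $m'$ of variables and the coefficient bound $\beta$; substituting the estimates and the values of $\mu,\nu$ gives the stated bound. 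The conceptual content is carried by Lemmas~\ref{lemma-reduction} and~\ref{lemma-connected-star}; the main obstacle is the bookkeeping: establishing the exact equivalence ``solution $\Leftrightarrow$ some valid type'' (in particular that Remark~\ref{remark-simplify-factorization-power} captures \emph{all} factorizations of a power and that $I$-free reducibility is certified by a finite combinatorial object), and, for the quantitative part, arranging that after eliminating the exponentially many piece-exponent variables only linearly many equality constraints survive, so that the factorial factor in Lemma~\ref{lemma-linear-diophantine} stays $(\alpha n)!$ rather than becoming a tower.
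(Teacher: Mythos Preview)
Your proposal is correct and follows essentially the same route as the paper's own proof: cyclic reduction and splitting into connected components (replacing $n$ by at most $\alpha n$), then Lemma~\ref{lemma-reduction} to refine the $2m+1$ irreducible factors into an $I$-freely reducible sequence of length $\le 2^{2m+1}-2$, then Remark~\ref{remark-simplify-factorization-power} for the power factorizations, then Lemma~\ref{lemma-connected-star} for the matched power-power pairs, and finally Lemma~\ref{lemma-linear-diophantine} for the bound, using that only $O(n)$ equality constraints survive after back-substitution. One small slip: the exponent on $\nu$ from Lemma~\ref{lemma-connected-star} is $8|A|$, not $4|A|$, because the ambient alphabet is $A^{\pm1}$ of size $2|A|$; this is exactly what produces the $\nu^{8\alpha|A|(n+1)}$ in the stated bound.
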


\begin{proof}
Let us choose irreducible traces for $u_1, u_2, \ldots, u_n, v_0, v_1, \ldots, v_n$; we denote these traces with the same letters as the group elements.
A trace $u$ is called cyclically reduced if there do not exist $a \in A^{\pm 1}$ and $v$ such that $u = a v a^{-1}$. 
For every trace there exist unique traces $p, w$ such that $u = p w p^{-1}$  and $w$ is cyclically reduced (since the reduction relation 
$a^{-1} x a \to x$ is terminating and confluent).
These traces $p$ and $w$ can be computed in polynomial time. Note that for a cyclically reduced irreducible trace $w$, every power
$w^n$ is irreducible.  By replacing every $u_i^{x_i}$ by $p_i w_i^{x_i} p_i^{-1}$ with $u_i = p_i w_i p_i^{-1}$ and $w_i$ 
cyclically reduced, we can assume that all $u_i$ are cyclically reduced and irreducible.
In case one of the traces $u_i$ is not connected, we can write $u_i$ as $u_i = u_{i,1} u_{i,2}$ with $u_{i,1} I u_{i,2}$ and $u_{i,1} \neq 1 \neq u_{i,2}$. Thus, we can 
replace the power $u_i^{x_i}$ by $u_{i,1}^{x_i} u_{i,2}^{x_i}$. Note that $u_{i,1}$ and $u_{i,2}$ are still irreducible and cyclically reduced.
By doing this, the number $n$ from the theorem
multiplies by at most $\alpha$ (which is the maximal number of pairwise independent letters).
In order to keep the notation simple we still use the letter $n$ for the number of $u_i$, but at the end
of the proof we have to multiply $n$ by $\alpha$ in the derived bound.
Hence, for the further proof we can assume that all $u_i$ are connected, irreducible and cyclically reduced. 
Let $\lambda$ be the maximal length of one of the traces $u_1, u_2, \ldots, u_n, v_0, v_1, \ldots, v_n$, which does not 
increase by the above preprocessing.

We now apply Lemma~\ref{lemma-reduction} to the equation
\begin{equation} \label{initial-equation}
v_0 u_1^{x_1} v_1 u_2^{x_2} v_2 \cdots u_n^{x_n} v_n = 1,
\end{equation}
where every $u_i^{x_i}$ is viewed as a single factor.
Note that by our preprocessing, all factors $u_1^{x_1}, u_2^{x_2}, \ldots, u_n^{x_n}, v_0, \ldots, v_n$ are irreducible
(for all choices of the $x_i$). By taking a big disjunction over (i) all possible factorizations 
of the $2n+1$ factors $u_1^{x_1}, u_2^{x_2}, \ldots, u_n^{x_n}, v_0, \ldots, v_n$ into totally at most 
$2^{2n+1}  - 2$ factors and (ii) all possible reductions of the resulting refined factorization of 
$v_0 u_1^{x_1} v_1 u_2^{x_2} v_2 \cdots u_n^{x_n} v_n$, it follows that \eqref{initial-equation}
is equivalent to a disjunction of statements of the following form: There exist
traces $y_{i,1}, \ldots, y_{i,k_i}$ ($1 \leq i \leq n$) and $z_{i,1}, \ldots, z_{i,l_i}$ ($0 \leq i \leq n$) such that
\begin{enumerate}[(a)]
\item $u_i^{x_i} = y_{i,1} \cdots y_{i,k_i}$  ($1 \leq i \leq n$)
\item $v_i = z_{i,1} \cdots z_{i,l_i}$ ($0 \leq i \leq n$)
\item $y_{i,j} I y_{k,l}$ for all $(i,j,k,l) \in J_1$
\item $y_{i,j} I z_{k,l}$ for all $(i,j,k,l) \in J_2$
\item $z_{i,j} I z_{k,l}$ for all $(i,j,k,l) \in J_3$
\item $y_{i,j} = y_{k,l}^{-1}$ for all $(i,j,k,l) \in M_1$
\item $y_{i,j} = z_{k,l}^{-1}$ for all $(i,j,k,l) \in M_2$
\item $z_{i,j} = z_{k,l}^{-1}$ for all $(i,j,k,l) \in M_3$
\end{enumerate}
Here, the numbers $k_i$ and $l_i$ sum up to at most $2^{2n+1}-2$ (hence, some $k_i$ 
can be exponentially large, whereas $l_i$ can be bound by the length of $v_i$, which is at most $\lambda$). The tuple 
sets $J_1, J_2, J_3$ collect all  independences between the factors $y_{i,j}$, $z_{k,l}$
that are necessary to carry out the chosen reduction of the refined left-hand side in \eqref{initial-equation}.
Similarly, the tuple sets $M_1, M_2, M_3$ tell us which of the factors  $y_{i,j}$, $z_{k,l}$ cancels
against which of the factors $y_{i,j}$, $z_{k,l}$ in our chosen reduction of the refined left-hand side in \eqref{initial-equation}.
Note that every factor $y_{i,j}$ (resp., $z_{k,l}$) appears in exactly one of the identities (f), (g), (h) (since
in the reduction every factor cancels against another unique factor). 

Next, we simplify our statements. Since the $v_i$ are concrete traces (of length at most $\lambda$), we can take a disjunction over
all possible factorizations $v_i = v_{i,1} \cdots v_{i,l_i}$ ($1 \leq i \leq n+1$). This allows to replace every variable 
$z_{i,j}$ by a concrete trace $v_{i,j}$. Statements
of the form $v_{i,j} I v_{k,l}$ and $v_{i,j} = v_{k,l}^{-1}$ can, of course, be eliminated.
Moreover, if there is an identity $y_{i,j} = v_{k,l}^{-1}$ then we can replace the variable $y_{i,j}$
by the concrete trace $v_{k,l}^{-1}$ (of length at most $\lambda$).

In the next step, we replace statements of the form $u_i^{x_i} = y_{i,1} \cdots y_{i,k_i}$  ($1 \leq i \leq n$).
Note that some of the variables $y_{i,j}$ might have been replaced by concrete traces  of length at most $\lambda$.
We apply to each of these equations Lemma~\ref{lemma-simplify-factorization-power}, or better
 Remark~\ref{remark-simplify-factorization-power}. This allows us to replace every equation $u_i^{x_i} = y_{i,1} \cdots y_{i,k_i}$
 ($1 \leq i \leq n$) by a disjunction of statements of the following form: There exist numbers
 $x_{i,j} > 0$ ($1 \leq i \leq n$, $j \in K_i$) such that
 \begin{itemize}
 \item $x_i = c_i +  \sum_{j \in K_i} x_{i,j}$ for all $1 \leq i \leq n$,
 \item $y_{i,j} = p_{i,j}  u_i^{x_{i,j}} s_{i,j}$ for all  $1 \leq i \leq n$, $j \in K_i$,
 \item $y_{i,j} = p_{i,j} s_{i,j}$ for all  $1 \leq i \leq n$, $j \in  [1,k_i] \setminus K_i$.
 \end{itemize}
Here, $K_i \subseteq [1,k_i]$, the $c_i$ are concrete numbers with $c_i \leq |A| \cdot (k_i-1)$, and the $p_{i,j}, s_{i,j}$ are concrete traces of length 
at most  $|A| \cdot (k_i-1) \cdot |u_i| \leq |A| \cdot (2^{2n+1}-3) \cdot \lambda$. Hence, the length of these traces can be exponential in $n$.

Note that since $x_i > 0$, we know the alphabet of  $y_{i,j} = p_{i,j}  u_i^{x_{i,j}} s_{i,j}$ (resp., $y_{i,j} = p_{i,j} s_{i,j}$).
This allows us to eliminate all independences of the form $y_{i,j} I y_{k,l}$ for $(i,j,k,l) \in J_1$ (see (c))
and  $y_{i,j} I z_{k,l}$ for $(i,j,k,l) \in J_2$ (see (d)). Note that all variables $z_{k,l}$ have already been replaced
by concrete traces. If $y_{i,j}$ was already replaced by a concrete trace, then we can determine from an equation
$y_{i,j} = p_{i,j}  u_i^{x_{i,j}} s_{i,j}$ the exponent $x_{i,j}$. Since $y_{i,j}$ was replaced by a trace of length at most $\lambda$ (a small number),
we get $x_{i,j} \leq \lambda$, and we can replace $x_{i,j}$ in $x_i = \sum_{j \in K_i} x_{i,j} + c_i$ by a concrete number of size at most $\lambda$.
Finally, if $y_{i,j}$ was replaced by a concrete trace, and we have an equation of the form
$y_{i,j} = p_{i,j} s_{i,j}$, then the resulting identity is either true or false and
can be eliminated.

After this step, we obtain a big disjunction of statements of the following form: There exist numbers
 $x_{i,j} > 0$ ($1 \leq i \leq n$, $j \in K'_i$) such that
\begin{enumerate}[(a')]
\item $x_i = c_i+ \sum_{j \in K'_i} x_{i,j}$ for all $1 \leq i \leq n$, and
\item $p_{i,j}  u_i^{x_{i,j}} s_{i,j}  = s_{k,l}^{-1} (u^{-1}_k)^{x_{k,l}} p_{k,l}^{-1}$
 for all $(i,j,k,l) \in M$.
\end{enumerate}
Here, $K'_i \subseteq K_i$ is a set of size at most $k_i \leq 2^{2n+1}-2$, $c_i \leq |A| \cdot (k_i-1) + \lambda \cdot k_i < (|A|+\lambda) \cdot (2^{2n+1}-2)$,
and the $p_{i,j}, s_{i,j}$ are concrete traces of length 
at most  $|A| \cdot (2^{2n+1}-3) \cdot \lambda$.  The set $M$ specifies a matching in the sense that for every exponent $x_{a,b}$ ($1 \leq a \leq n$, $b \in K'_i$)
there is a unique $(i,j,k,l) \in M$ such that $(i,j) = (a,b)$ or $(k,l) = (a,b)$. Note that
$$ 
|M| = \frac{1}{2} \sum_{i=1}^n |K'_i| \leq \frac{1}{2} \sum_{i=1}^n k_i \leq \frac{1}{2} (2^{2n+1}-2) = 2^{2n}-1.
$$
We now apply Lemma~\ref{lemma-connected-star} to the identities 
$p_{i,j}  u_i^{x_{i,j}} s_{i,j}  = s_{k,l}^{-1} (u^{-1}_k)^{x_{k,l}} p_{k,l}^{-1}$.
Each such identity can be replaced by a disjunction of constraints
$$
(x_{i,j}, x_{k,l}) \in \{ (a_{i,j,k,l} + b_{i,j,k,l} \cdot z_{i,j,k,l}, c_{i,j,k,l} + d_{i,j,k,l} \cdot z_{i,j,k,l}) \mid z_{i,j,k,l} \in \mathbb{N} \}.
$$
For the numbers $a_{i,j,k,l}, b_{i,j,k,l}, c_{i,j,k,l}, d_{i,j,k,l}$ we obtain the bound
$$
a_{i,j,k,l}, b_{i,j,k,l}, c_{i,j,k,l}, d_{i,j,k,l} \in O(\mu^8 \cdot \nu^{8 |A|})
$$
(the alphabet of the traces is $A^{\pm 1}$ which has size $2 |A|$, therefore, we have
to multiply in Lemma~\ref{lemma-connected-star} $|A|$ by $2$), where, by Lemma~\ref{lemma-prefixes},
\begin{equation} \label{eq-mu}
\mu = \max\{ \rho(p_{i,j}), \rho(p_{k,l}), \rho(s_{i,j}), \rho(s_{k,l})\} \in  O(|A|^\alpha \cdot 2^{2 \alpha n} \cdot \lambda^\alpha)
\end{equation}
and 
\begin{equation}  \label{eq-nu}
\nu = \max\{ \rho(u_i), \rho(u_k) \} \in O(\lambda^\alpha).
\end{equation}
Note that $\rho(t) = \rho(t^{-1})$ for every trace $t$. 
The above equation (a') for $x_i$ can be now written as
$$
x_i = c_i+ \sum_{(i,j,k,l) \in M} (a_{i,j,k,l} + b_{i,j,k,l} \cdot z_{i,j,k,l}) +  \sum_{(k,l,i,j) \in M} (c_{k,l,i,j} + d_{k,l,i,j} \cdot z_{k,l,i,j}) .
$$
Note that the two sums in this equation contain in total $|K'_i| \leq 2^{2n+1}$ many summands (since for every
$j \in K'_i$ there is a unique pair $(k,l)$ with $(i,j,k,l) \in M$ or $(k,l,i,j) \in M$). 

Hence, after a renaming of symbols, the initial equation \eqref{initial-equation} becomes equivalent to
a finite disjunction of statements of the form: There exist 
$z_1, \ldots, z_m \in \mathbb{N}$ (these $z_i$ are the above $z_{i,j,k,l}$ and $m = |M|$)  such that
\begin{equation} \label{eq-final-equation}
x_i = a_i + \sum_{j=1}^m  a_{i,j} z_j \text{ for all } 1 \leq i \leq n .
\end{equation}
Moreover, we have the following size bounds:
\begin{itemize}
\item $m = |M| \leq 2^{2n}-1$,
\item $a_i  \in O(c_i + |K'_i| \cdot \mu^8 \cdot \nu^{8 |A|})  \subseteq
O(2^{2n} (|A|+\lambda + \mu^8 \cdot  \nu^{8 |A|})) 
\subseteq O(2^{2n} \cdot \mu^8 \cdot  \nu^{8 |A|})$
\item $a_{i,j} \in O(\mu^8 \cdot  \nu^{8 |A|})$
\end{itemize}
Recall that some of the variables $x_i$ can be identical.
W.l.o.g. assume that $x_1, \ldots, x_k$ are pairwise different and for all $k+1 \leq i \leq n$, $x_i = x_{f(i)}$,
where $f : [k+1,n] \to [1,k]$. 
Then, the system of equations \eqref{eq-final-equation} is equivalent to 
\begin{gather*}
x_i = a_i + \sum_{j=1}^m  a_{i,j} z_j \text{ for all } 1 \leq i \leq k \\
a_i - a_{f(i)} = \sum_{j=1}^m  (a_{f(i),j}-a_{i,j}) z_j \text{ for all } k+1 \leq i \leq n .
\end{gather*}
The set of all $(x_1, \ldots, x_k) \in \mathbb{N}^k$ for which there exist $z_1, \ldots, z_m \in \mathbb{N}$ 
satisfying these equalities is 
semilinear by Lemma~\ref{lemma-linear-diophantine}, and if it is non-empty then it contains 
a vector
$(x_1, \ldots, x_k) \in \mathbb{N}^k$ such that
$$
x_i  \in O(n! \cdot m^2 \cdot 2^{2n(n+1)} \cdot \mu^{8(n+1)} \cdot  \nu^{8 |A| (n+1)})
\subseteq  O(n! \cdot  2^{2n(n+3)} \cdot \mu^{8(n+1)} \cdot  \nu^{8 |A| (n+1)}).
$$
Recall that in this bound we have to replace $n$ by $\alpha \cdot n$ due to the initial preprocessing.
This proves the theorem.
\end{proof}

\begin{theorem} \label{thm-main-graph-groups}
Let $(A,I)$ be a fixed independence alphabet.
Solvability of  compressed exponent equations over the graph group $\dG(A,I)$ is 
in {\sf NP}.
\end{theorem}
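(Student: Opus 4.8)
The plan is to reduce to the already-established tools: the effective exponential solution bound of Theorem~\ref{thm-main-technical} and the polynomial-time algorithm for the compressed word problem of graph groups \cite{LoSchl07}.

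So let the input be a compressed exponent equation $E : v_0 u_1^{x_1} v_1 u_2^{x_2} v_2 \cdots u_n^{x_n} v_n = 1$, where each $u_i$ and $v_i$ is given by an SLP over $A^{\pm 1}$, and let $N$ denote the total size of these SLPs; note $n \le N$. Since an SLP of size $m$ evaluates to a word of length $\mathcal{O}(3^{m/3})$ \cite{CLLLPPSS05}, the group element represented by each $u_i$ and $v_i$ has length (in the sense $|g|$ defined above, i.e.\ the length of its irreducible trace) bounded by the length of the produced word, hence by $2^{O(N)}$; thus $\lambda \le 2^{O(N)}$ in the notation of Theorem~\ref{thm-main-technical}. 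First I would run the polynomial-time compressed word problem on each $u_i$ to discard those with $u_i = 1$ in $\dG(A,I)$; this does not affect solvability and puts us in the setting $u_i \neq 1$ required by Theorem~\ref{thm-main-technical}.

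Next, invoke Theorem~\ref{thm-main-technical}. Because the independence alphabet $(A,I)$ is fixed, $|A|$ and $\alpha$ are constants, so substituting $\lambda \le 2^{O(N)}$ and $n \le N$ into the stated estimates yields $\mu,\nu \in 2^{O(N)}$ and therefore a bound of the form $x_i \le 2^{O(N^2)}$ on the variable values in some solution, whenever $E$ is solvable. In other words, each $x_i$ admits a binary encoding with $O(N^2)$ bits. This gives the {\sf NP} procedure: guess binary numbers $b_1,\dots,b_n$ of $O(N^2)$ bits, using the same number whenever two of the variables coincide; for each $i$ build, by iterated squaring, an SLP for $u_i^{b_i}$ of size polynomial in $N$, and concatenate these with the SLPs for the $v_i$ to obtain, in polynomial time, an SLP $\mathcal{H}$ of polynomial size with $\val(\mathcal{H}) = v_0 u_1^{b_1} v_1 \cdots u_n^{b_n} v_n$ over $A^{\pm 1}$; finally decide in polynomial time whether $\val(\mathcal{H}) = 1$ in $\dG(A,I)$ via \cite{LoSchl07}, and accept precisely in that case. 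Correctness is clear: any accepting run exhibits a genuine solution, and conversely a solvable $E$ has, by the above bound, a solution matched by some guess.

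The genuinely hard work is all inside Theorem~\ref{thm-main-technical}; here the only things that need care are (i) confirming that that theorem's bound stays \emph{singly} exponential in $N$ after the compressed lengths $\lambda \le 2^{O(N)}$ are plugged in — so that the $b_i$ really have polynomially many bits — and (ii) the routine observation that iterated squaring keeps the verification SLP $\mathcal{H}$ of polynomial size. Neither should pose a real obstacle; the main risk is simply an arithmetic slip in the bookkeeping of the bound, and remembering that the compressed word problem for graph groups is needed both for the preprocessing step and for the final verification.
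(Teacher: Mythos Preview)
Your proposal is correct and follows essentially the same route as the paper: invoke Theorem~\ref{thm-main-technical} to get a singly-exponential bound on a solution, guess the binary encodings, build an SLP by iterated squaring, and verify with the polynomial-time compressed word problem for graph groups. If anything you are slightly more careful than the paper's own write-up: you explicitly handle the precondition $u_i\neq 1$ of Theorem~\ref{thm-main-technical} via a preprocessing call to the compressed word problem, and you work out that the resulting bound is $2^{O(N^2)}$ rather than merely asserting it is ``exponential in the input length.''
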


\begin{proof}
Consider a compressed exponent equation 
$$
E = (v_0 u_1^{x_1} v_1 u_2^{x_2}  v_2 \cdots u_n^{x_n} v_n = 1),
$$
where $u_i = \val(\cG_i)$ and $v_i = \val(\cH_i)$ for given SLPs $\cG_1, \ldots, \cG_n, \cH_0, \ldots, \cH_n$.
Let $m = \max\{ |\cG_1|, \ldots, |\cG_n|, |\cH_0|, \ldots, |\cH_n| \}$.
By Theorem~\ref{thm-main-technical} we know that if there exists a solution  for $E$ then 
there exists a solution $\sigma$ with 
$\sigma(x_i)  \in O((\alpha n)! \cdot  2^{2\alpha^2 n(n+3)} \cdot \mu^{8\alpha(n+1)} \cdot  \nu^{8 \alpha |A| (n+1)})$,
where
\begin{itemize}
\item $\mu \in  O(|A|^\alpha \cdot 2^{2 \alpha^2 n} \cdot \lambda^\alpha)$, 
\item $\nu  \in O(\lambda^\alpha)$,
\item $\lambda = \max\{ |u_1|, |u_2|, \ldots, |u_n|, |v_0|, |v_1|, \ldots, |v_n|\} \in 2^{O(m)}$, and
\item $\alpha \leq |A|$.
\end{itemize}
Note that the bound on the $\sigma(x_i)$ is exponential in the input length (the sum of the sizes
of all $\cG_i$ and $\cH_i$). 
Hence, we can guess in polynomial time the binary encodings of numbers 
$k_i \in O((\alpha n)! \cdot  2^{2\alpha^2 n(n+3)} \cdot \mu^{8\alpha(n+1)} \cdot  \nu^{8 \alpha |A| (n+1)})$
(where $k_i = k_j$ if $x_i = x_j$). Then, we have to verify whether 
$$\val(\cH_0) \val(\cG_1)^{k_1} \val(\cH_1) \val(\cG_2)^{k_2}  \val(\cH_2) \cdots \val(\cG_n)^{k_n} \val(\cH_n) = 1$$
in the graph group $\dG(A,I)$. This
is an instance of the so called {\em compressed word problem} for $\dG(A,I)$, where the input consists
of an SLP $\cG$ over the alphabet $A^{\pm 1}$ and it is asked whether $\val(\cG) =1$ in $\dG(A,I)$. 
Note that the big powers $\val(\cG_i)^{k_i}$ can be produced with the productions of $\cG_i$ and additional
$\lceil \log k_i \rceil$ many productions (using iterated squaring). Since the compressed word problem for a graph group can be solved
in deterministic polynomial  time \cite{Loh14,LoSchl07}, the statement of the theorem follows. For the last step, it 
is important that $(A,I)$ is fixed.
\end{proof}

\begin{remark}
Note that the bound on the exponents $\sigma(x_i)$ in the previous proof is still exponential  in the input length if
the  independence alphabet $(A,I)$ is part of the input as well. The problem is that we do not know whether
the {\em uniform compressed word problem} for graph groups (where the input is an independence alphabet $(A,I)$
together with an SLP over the terminal alphabet $A^{\pm 1}$) can be solved in polynomial time or at least in {\sf NP}.
The latter would suffice to get an {\sf NP}-algorithm for solvability of  compressed exponent equations over a graph group
that is part of the input.
\end{remark}

\section{Transfer results} \label{sec-transfer}
In this section, we show that the property of having an {\sf NP}-algorithm for
the knapsack problem (or compressed exponent equations) is preserved by certain
transformations on groups. Specifically, we show that the class of groups that
admit an {\sf NP}-algorithm for knapsack is closed under (i)~finite extensions,
(ii)~HNN-extensions with finite associated subgroups, and (iii)~amalgamated
free products with finite identified subgroups. In the case of finite extensions,
the transfer also holds for compressed exponent equations.

\paragraph{Finite extensions and virtually special groups.}
Our first transfer result concerns finite extensions. Together with our result
on graph groups, this will provide a large class of groups with an {\sf
NP}-algorithm for compressed exponent equations.  A group $G$ is called {\em
virtually special} if it is a finite extension of a subgroup of a graph group.
Recently, this class of groups turned out to be very rich.  It contains the
following classes of groups:
\begin{itemize}
\item Coxeter groups  \cite{HagWi10}
\item one-relator groups with torsion \cite{Wis09}
\item fully residually free groups \cite{Wis09}
\item fundamental groups of hyperbolic 3-manifolds \cite{Agol12}
\end{itemize}

\noindent
The following is our transfer theorem for finite extensions.
\begin{theorem} \label{thm-finite-index}
Let $G$ and $H$ be finitely generated groups such that $H$ is a finite
extension of $G$.  If knapsack (resp. solvability of compressed exponent
equations) belongs to {\sf NP} for $G$, then the same holds for $H$.
\end{theorem}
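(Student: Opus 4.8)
The plan is to reduce a (compressed) exponent equation over $H$ to a (compressed) exponent equation over $G$ by guessing, for every exponent variable, its residue modulo the index $[H:G]$, and then transporting the constants that arise into $G$ by a conjugation identity together with a Reidemeister--Schreier rewriting. First I would reduce to the case that $G$ is \emph{normal} in $H$: replacing $G$ by its normal core $N$ in $H$ gives a finite--index \emph{normal} subgroup $N\trianglelefteq H$ with $N\le G$; since $N$ has finite index in the finitely generated group $G$ it is finitely generated, and each of its generators is a fixed word over the generators of $G$. Textually substituting these words turns a (compressed) exponent equation over $N$ into one over $G$ of at most linearly larger size (for SLPs, the size grows by the constant factor $\max_s|w_s|$), and the equation holds in $N$ iff it holds in $G$; moreover this substitution preserves the knapsack form. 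Hence an {\sf NP}-algorithm for $G$ gives one for $N$, and it suffices to prove the theorem under the additional hypothesis $G\trianglelefteq H$. Write $e=[H:G]$ and let $\pi\colon H\to F:=H/G$ be the quotient onto the fixed finite group $F$.

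Given an exponent equation $v_0 u_1^{x_1} v_1 \cdots u_n^{x_n} v_n = 1$ over $H$ (a knapsack instance is the special case $v_0=v_1=\cdots=v_{n-1}=1$, $v_n=u^{-1}$, with pairwise distinct $x_i$), I would nondeterministically guess a residue $r_x\in\{0,\dots,e-1\}$ for each exponent variable $x$ --- a certificate of polynomial size --- and substitute $x = e\,y_x + r_x$ with a fresh variable $y_x$ (using the same $r_x,y_x$ for repeated occurrences of $x$). Since powers of a fixed element commute, $u_i^{x_i}=(u_i^{e})^{y_i}u_i^{r_i}$, so after regrouping the equation becomes
\[
  c_0\, b_1^{y_1} c_1\, b_2^{y_2} c_2 \cdots b_n^{y_n} c_n = 1,
  \qquad b_i := u_i^{e}\in G,\quad c_0:=v_0,\quad c_i:=u_i^{r_i}v_i\in H .
\]
Projecting under $\pi$ kills every $b_i$, so this guess can succeed only if $\pi(c_0)\pi(c_1)\cdots\pi(c_n)=1$ in $F$; this is checked in polynomial time, and otherwise the branch is rejected. (In the compressed case $u_i^{e}$, $u_i^{r_i}$, and the products $c_i$ are realized by SLPs of size $O(|\cG_i|)$, $O(|\cG_i|)$, and $O(|\cG_i|+|\cH_i|)$ respectively, as $e$ is a constant.)

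Next, with $P_i := c_0 c_1 \cdots c_i$, the elementary identity
\[
  c_0 b_1^{y_1} c_1 \cdots b_n^{y_n} c_n
   = (P_0 b_1 P_0^{-1})^{y_1}\,(P_1 b_2 P_1^{-1})^{y_2} \cdots (P_{n-1} b_n P_{n-1}^{-1})^{y_n}\, P_n
\]
shows that the equation is equivalent to $\hat b_1^{y_1}\cdots \hat b_n^{y_n} = P_n^{-1}$ with $\hat b_i := P_{i-1} b_i P_{i-1}^{-1}$. By normality of $G$ each $\hat b_i\in G$, and by the consistency check $P_n^{-1}\in G$; if the input was a knapsack instance, the $y_i$ are pairwise distinct and this is again a knapsack instance. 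It remains to rewrite $\hat b_1,\dots,\hat b_n,P_n^{-1}\in G$, currently given as words (resp.\ SLPs) over the generators of $H$, as words (resp.\ SLPs) over the generators of $G$. Fixing a Schreier transversal for $G$ in $H$, a word over the generators of $H$ that represents an element of $G$ is rewritten --- by tracking the coset letter by letter --- into a linearly longer word over the finitely many (fixed) Schreier generators, each of which is a fixed word over the generators of $G$; this is polynomial time. In the compressed case the same rewriting is performed on an SLP $\cG$ by introducing, for every variable $A$ of $\cG$ and every coset $q$, a variable $A_q$ computing the rewrite of $\val_{\cG}(A)$ started in coset $q$, with $\rhs(A_q)=B_q C_{q'}$ whenever $\rhs(A)=BC$ and $q'$ is the coset reached after $\val_{\cG}(B)$; this multiplies the size only by the constant $e$. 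The conjugators $P_{i-1}$ are fixed products of the given elements, hence of polynomial (resp.\ SLP) size. We thereby obtain in polynomial time a (compressed) exponent equation --- in fact a knapsack instance, if we started from one --- over $G$ that is solvable iff the original equation has a solution with the guessed residues.

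The {\sf NP}-algorithm for $H$ then guesses the residues, builds this equation over $G$ deterministically in polynomial time, and calls the assumed {\sf NP}-algorithm for $G$ (knapsack in the knapsack case, compressed exponent equations in the other case), accepting iff it accepts; the original equation over $H$ is solvable iff some residue guess yields a solvable equation over $G$. I expect the main obstacle to be the last step in the compressed setting: one has to check that the SLP-level Reidemeister--Schreier rewriting, together with forming the conjugates $\hat b_i$, keeps the SLP size polynomial --- which works precisely because $e=[H:G]$ is a constant determined by $G$ and $H$ rather than part of the input. The remaining points (the normal-core reduction, the residue bookkeeping, and the conjugation identity) are routine.
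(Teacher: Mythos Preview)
Your argument is correct and takes a genuinely different route from the paper. The paper does \emph{not} pass to the normal core; instead it works with an arbitrary finite-index subgroup, fixes coset representatives $C$, guesses the full sequence of cosets $c_i,d_i$ visited along a putative solution, and then analyses for each $i$ the self-map $f_i\colon C\to C$ induced by right multiplication by $u_i$ to extract a period $k_i$ and an offset $r_i$ with $x_i=m+k_iy_i+r_i$. Only after this coset-cycle analysis does it assemble an equation over $G$. Your normal-core step buys a substantial simplification: once $G\trianglelefteq H$, one has $u_i^e\in G$ for free, so guessing a single residue $r_x\in\{0,\dots,e-1\}$ per variable and setting $b_i=u_i^e$ suffices, with no need for the functions $f_i$ or their periods. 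Conversely, the paper's approach avoids the extra bookkeeping of first descending to the core (whose index may be $[H:G]!$) and then climbing back up through $G$. Your compressed Reidemeister--Schreier rewriting (one copy $A_q$ of each SLP variable per coset) is exactly the content of the paper's Lemma~\ref{lemma-cosets-compressed}, so the compressed case goes through as you outline. One small point worth making explicit: the $n$ conjugates $\hat b_i=P_{i-1}b_iP_{i-1}^{-1}$ share the subexpressions $P_0,\dots,P_{n-1}$, so to keep total SLP size polynomial you should build all of them over a single shared grammar rather than expanding each $P_{i-1}$ afresh; this is routine but should be said.
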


\noindent
From Theorem~\ref{thm-main-graph-groups} it follows that solvability of
compressed exponent equations belongs to {\sf NP} for every subgroup of a graph
group. Therefore, our transfer theorem implies:
\begin{theorem}
Solvability of compressed exponent equations belongs to {\sf NP} for every virtually special group. In particular,
solvability of compressed exponent equations belongs to {\sf NP} for Coxeter groups, one-relator groups with torsion, 
fully residually free groups, and
fundamental groups of  hyperbolic 3-manifolds.
\end{theorem}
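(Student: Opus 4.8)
The plan is to obtain this theorem as a direct consequence of Theorem~\ref{thm-main-graph-groups} and Theorem~\ref{thm-finite-index}, after one preliminary observation: solvability of compressed exponent equations belongs to {\sf NP} not only for graph groups but for every finitely generated subgroup of a graph group. To see this, fix a finitely generated subgroup $G \le \dG(A,I)$ together with a finite generating set $g_1, \dots, g_r$ of $G$; each $g_i$ is represented by a fixed word $w_i \in (A^{\pm 1})^*$, hence by a fixed SLP $\cG_i$ with $\val(\cG_i) = w_i$ (and likewise for $w_i^{-1}$). Given a compressed exponent equation $E$ over $G$, whose group elements are specified by SLPs over the alphabet $\{g_1^{\pm 1}, \dots, g_r^{\pm 1}\}$, I replace in every right-hand side each occurrence of a symbol $g_i^{\pm 1}$ by the start variable of a copy of the appropriate fixed SLP and add the corresponding productions. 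This yields a compressed exponent equation $E'$ over $\dG(A,I)$ whose size is polynomial (in fact linear, since $(A,I)$ and the generating set of $G$ are fixed and SLPs compose with additive size) in $|E|$. Since $G$ is a subgroup of $\dG(A,I)$, for any assignment of the exponent variables the product on the left-hand side of $E$ lies in $G$ and equals $1$ in $G$ if and only if it equals $1$ in $\dG(A,I)$; hence $E$ is solvable if and only if $E'$ is solvable, and the latter can be decided in {\sf NP} by Theorem~\ref{thm-main-graph-groups}.

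Next I would apply the finite-extension transfer. Let $H$ be an arbitrary virtually special group. By definition $H$ is a finite extension of some finitely generated subgroup $G$ of a graph group (finite generation of $G$ follows from that of $H$ by Schreier's lemma). By the observation above, solvability of compressed exponent equations over $G$ belongs to {\sf NP}, so Theorem~\ref{thm-finite-index} applies and gives that solvability of compressed exponent equations over $H$ belongs to {\sf NP} as well. This proves the first sentence of the theorem. For the second sentence it then suffices to recall that each of the listed classes consists of virtually special groups: Coxeter groups by \cite{HagWi10}, one-relator groups with torsion and fully residually free groups by \cite{Wis09}, and fundamental groups of hyperbolic 3-manifolds by \cite{Agol12}. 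Applying the first statement to these classes yields the claim.

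I do not anticipate a substantial obstacle here: the theorem is purely a combination of Theorems~\ref{thm-main-graph-groups} and~\ref{thm-finite-index} with the cited group-theoretic facts. The only point requiring a (routine) argument is the reduction from a subgroup $G$ to the ambient graph group $\dG(A,I)$, and the only thing to check there is that the SLP substitution does not blow up the instance super-polynomially — which it does not, because the words realizing the generators of $G$ are fixed once a generating set has been chosen. One should also note, as in the remark following Theorem~\ref{thm-main-graph-groups}, that this argument relies on the group (the graph group, its subgroup, and the finite extension data) being fixed rather than part of the input.
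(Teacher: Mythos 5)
Your proposal is correct and follows essentially the same route as the paper: the paper likewise observes that Theorem~\ref{thm-main-graph-groups} yields {\sf NP}-membership for every (finitely generated) subgroup of a graph group and then invokes the finite-extension transfer theorem together with the cited facts that the listed classes are virtually special. Your explicit SLP-substitution argument for the subgroup step (and the appeal to Schreier's lemma for finite generation) just spells out a detail the paper leaves implicit, and it is sound.
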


\noindent
We need the following statement, which is shown implicitly in the proof of \cite[proof of Theorem~4.4]{Loh14}.
\begin{lemma} \label{lemma-cosets-compressed}
Let $G$ and $H$ be finitely generated groups such that $H$ is a finite extension of $G$
and let $C$  be a set of right coset representatives of $G$.
Let $A$ (resp. $B \supseteq A$) be a finite generating set for $G$ (resp., $H$).
From a given SLP $\cH$ over the terminal alphabet $B^{\pm 1}$ one can compute
in polynomial time (i) the unique coset representative $c \in C$ such that $\val(\cH) \in Gc$ 
and (ii) an SLP $\cG$ over the terminal alphabet $A^{\pm 1}$ such that
$\val(\cG) c = \val(\cH)$ holds in the group $H$.
\end{lemma}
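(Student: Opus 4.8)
The plan is to realise the assignment
\[
\val(\cH)\ \longmapsto\ \bigl(\text{the representative }c\in C\text{ with }\val(\cH)\in Gc,\ \text{a word over }A^{\pm1}\text{ for the }G\text{-part of }\val(\cH)\bigr)
\]
by a \emph{fixed} finite deterministic transducer whose states are the coset representatives in $C$, and then to apply this transducer to the SLP $\cH$. Turning a rational letter-to-word transduction into an SLP construction is a standard polynomial-time operation on SLPs, and the final state reached by the transducer gives part~(i).

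First I would describe the transducer. We may assume the representative of the coset $G=G\cdot 1$ in $C$ is $1$ (otherwise let $c_{G}\in C$ be this representative, note $c_{G}\in G$, run the transducer from initial state $c_{G}$, and prepend to the start production a fixed word over $A^{\pm1}$ for $c_{G}^{-1}$). For each $c\in C$ and each $a\in B^{\pm1}$ the element $ca\in H$ lies in a unique coset $Gc'$ with $c'\in C$; write $ca=\tau(c,a)\,c'$ with $\tau(c,a)\in G$ and $\delta(c,a):=c'$, and fix once and for all a word over $A^{\pm1}$ representing $\tau(c,a)$. There are only finitely many pairs $(c,a)$, and this data depends only on $G,H,A,B,C$, not on the input, so it is part of the description of the algorithm. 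Extending $\delta$ to a right action $\delta(c,w)$ of $(B^{\pm1})^{*}$ on $C$ and accumulating the outputs to a word $\tau(c,w)$ over $A^{\pm1}$, an easy induction on $|w|$ gives the invariant
\[
c\cdot w=\tau(c,w)\,\delta(c,w)\ \text{ in }H,\qquad \tau(c,w)\in G,
\]
for all $c\in C$, $w\in(B^{\pm1})^{*}$. So the two objects asked for are $c=\delta(1,\val(\cH))$ and a word for $\tau(1,\val(\cH))$.

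Next I would carry this out on SLPs. Let $\cH=(V,B^{\pm1},\rhs,S)$. In one bottom-up pass through $V$ (respecting the order $<$ on $V$) I compute, for every $X\in V$, the map $\delta_{X}\colon C\to C$, $\delta_{X}(c)=\delta(c,\val_{\cH}(X))$: if $\rhs(X)=\gamma_{1}\cdots\gamma_{k}$ then $\delta_{X}=\delta_{\gamma_{k}}\circ\cdots\circ\delta_{\gamma_{1}}$, where $\delta_{a}=\delta(\cdot,a)$ for a terminal $a$ and $\delta_{\gamma}$ is already known for a variable $\gamma<X$; for $k=0$ it is the identity. This pass runs in polynomial time, and $c:=\delta_{S}(1)$ is the representative of part~(i). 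For part~(ii) I define $\cG$ over the terminal alphabet $A^{\pm1}$ with variable set $\{X^{(d)}\mid X\in V,\ d\in C\}$, start variable $S^{(1)}$, and, for $\rhs(X)=\gamma_{1}\cdots\gamma_{k}$, right-hand side
\[
\rhs_{\cG}(X^{(d)})=\beta_{1}\beta_{2}\cdots\beta_{k},\qquad
\beta_{i}=\begin{cases}\tau(d_{i},\gamma_{i})&\text{if }\gamma_{i}\in B^{\pm1},\\[1mm]\gamma_{i}^{(d_{i})}&\text{if }\gamma_{i}\in V,\end{cases}
\]
where $d_{1}=d$ and $d_{i+1}=\delta_{\gamma_{i}}(d_{i})$. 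Each $\tau(d_{i},\gamma_{i})$ is a fixed word of length $O(1)$, so $|\cG|\in O(|C|\cdot|\cH|)$, and $\cG$ inherits acyclicity from $\cH$ since the variables in $\rhs_{\cG}(X^{(d)})$ are among the $\gamma^{(e)}$ with $\gamma\in V$ occurring in $\rhs(X)$, hence $\gamma<X$. A straightforward induction along $<$ shows $\val_{\cG}(X^{(d)})=\tau(d,\val_{\cH}(X))$ (as words over $A^{\pm1}$ representing elements of $G$) for all $X,d$; taking $X=S$, $d=1$ and using the invariant with $w=\val(\cH)$ yields $\val(\cG)\in G$ and $\val(\cG)\,c=\val(\cH)$ in $H$, which also reconfirms $\val(\cH)\in Gc$.

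The construction is essentially routine; the points needing care are that (a) the transducer outputs $\tau(c,a)$ are fixed words over $A^{\pm1}$ of bounded length, which is exactly what keeps $\cG$ of polynomial size under the product construction, and (b) the bottom-up computation of the transition maps $\delta_{X}$, which lets us both name the representative $c$ in part~(i) and choose the correct indices $d_{i}$ in the right-hand sides of $\cG$. I do not expect a genuine obstacle beyond carrying the invariant $d\cdot\val_{\cH}(X)=\val_{\cG}(X^{(d)})\,\delta_{X}(d)$ accurately through the two inductions.
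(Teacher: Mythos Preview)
Your proposal is correct and is precisely the standard argument: run a fixed finite-state transducer over the coset space $C$ along the SLP via the product construction (variables $X^{(d)}$), which yields both the terminal coset and an SLP for the $G$-part in polynomial time. The paper itself does not spell out a proof of this lemma but defers to \cite[proof of Theorem~4.4]{Loh14}, where exactly this transducer-on-SLP technique is used, so your approach coincides with the intended one.
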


\begin{proof}[Proof of Theorem~\ref{thm-finite-index}]
In \cite{KoeLoZe15}, it was shown that for each finitely generated group, the
knapsack problem and the solvability of exponential expressions where each
variable occurs only once (the latter is called \emph{generalized knapsack
problem} there) are polynomially inter-reducible. Therefore, we shall prove
that exponential expression over $H$ can be reduced to exponential expressions
over $G$.  Moreover, the reduction preserves the property that each variable
occurs only once. We only describe the case that all inputs are uncompressed;
by means of Lemma~\ref{lemma-cosets-compressed}, the compressed case can be
treated analogously.

Assume that $[H:G] = m$ and let $C$ be a set of coset representatives, $|C|=m$.
Let $A$ be a finite generating set for $G$ and let $B \supseteq A$ be a finite
generating set for $H$.  Suppose we are given an exponent equation
\begin{equation} v_0u_1^{x_1}v_1\cdots u_n^{x_n}v_n=1 \label{fi:input} \end{equation}
in $H$ where the $v_i$ and the $u_i$ are represented as words over $B^{\pm 1}$.
As a first step, we guess which of the variables $x_i$ assume a value smaller
than $m$.  For those that do, we can guess the value and merge the result in a
neighboring $v_i$. This increases the size of the instance by at most a factor
of $m$, which is a constant.  Hence, from now on, we only look for solutions to
(\ref{fi:input}) where $x_i\ge m$ for $1\le i\le n$.

The next step of our ${\sf NP}$ algorithm is to guess the cosets 
occurring in a solution. This means, we guess $d_0,c_1,d_1,\ldots,c_n,d_n\in C$
and look for a solution to (\ref{fi:input}) such that
$v_0u_1^{x_1}v_1\cdots u_i^{x_i}v_i\in Gd_i$ and $v_0u_1^{x_1}v_1\cdots u_i^{x_i}\in Gc_i$ for $0\le i\le n$.
This is equivalent to a solution where the elements
\begin{equation*}
v_0d_0^{-1},\quad d_{i-1}u_i^{x_i}c_i^{-1},\quad c_{i}v_id_i^{-1},\quad d_n
\end{equation*}
all belong to $G$ for $1\le i\le n$. We can verify in polynomial time that
$v_0d_0^{-1}$, $c_iv_id_i^{-1}$ ($1\le i\le n$), and $d_n$ belong to $G$.
Therefore, we want to check whether there is a solution to (\ref{fi:input})
where $d_{i-1}u_i^{x_i}c_i^{-1}\in G$ for $1\le i\le n$. 

Consider the function $f_i\colon C\to C$, which is defined so that for each
$c\in C$, $f_i(c)$ is the unique element $d\in C$ with $cu_id^{-1}\in G$. Note
that we can compute $f_i$ in polynomial time. Then there are numbers $1\le k_i\le
m$ such that $f_i^{m+k_i}(d_{i-1})=f_i^m(d_{i-1})$.  With this notation, we
have $d_{i-1}u_i^{x_i}c_i^{-1}\in G$ if and only if $f_i^{x_i}(d_{i-1})=c_i$.

We may assume that there is an $x_i\ge m$ with $f_i^{x_i}(d_{i-1})=c_i$:
Otherwise, there is no solution and we can terminate our branch. Therefore,
there is a $0\le r_i<k_i$ such that $f_i^{m+r_i}(d_{i-1})=c_i$. This means, we
have $f_i^{x_i}(d_{i-1})=c_i$ for $x_i\ge m$ if and only if $x_i=m+k_i\cdot
y_i+r_i$ for some $y_i\ge 0$.  This allows us to construct an exponent equation
over $G$.

Let $e_i=f_i^m(d_{i-1})$. Then, the elements $d_{i-1}u_i^me_i^{-1}$,
$e_iu_i^{k_i}e_i^{-1}$, and $e_iu_i^{r_i}c_i^{-1}$ all belong to $G$. Moreover,
for $x_i=m+k_i\cdot y_i+r_i$, we have
\begin{align*}
v_0u_1^{x_1}v_1\cdots u_n^{x_n}v_n &= v_0d_0^{-1} \prod_{i=1}^n d_{i-1} u_i^{m+k_i\cdot y_i+r_i}c_i^{-1}c_i v_i d_i^{-1} \\
&=(v_0d_0^{-1}) \prod_{i=1}^n (d_{i-1} u_i^m e_i^{-1})(e_iu_i^{k_i} e_i^{-1})^{y_i} (e_i u_i^{r_i} c_i^{-1}c_i v_i d_i^{-1})
\end{align*}
and each term in parentheses belongs to $G$. This clearly yields an exponent
equation over $G$ (with variables $y_1,\ldots,y_n$) that is solvable if and
only if there is a solution of (\ref{fi:input}) of the kind we are looking for.
It remains to verify that the new instance is polynomial in size. 

There is a constant $\ell$ such that given a word $w$ representing $h\in H$ and
elements $c,d\in C$ such that $chd^{-1}\in G$, a word of length at most
$\ell\cdot|w|$ representing $chd^{-1}$ over $A^{\pm 1}$ is computable in linear
time.  Let $s_i,t_j\in (B^{\pm 1})^*$ represent $v_i$ and $u_j$, respectively,
for $0\le i\le n$ and $1\le j\le n$.  Then, the new instance has size at most
\[ \ell|s_0|+\sum_{i=1}^n \ell(m+k_i+r_i)|t_i|+\ell|s_i| \le 3m\ell (|s_0|+|t_1|+|s_1|+\cdots |t_n|+|s_n|) \]
which is linear in the size of the old instance.
\end{proof}

\paragraph{HNN-extensions and amalgamated products.}
The remaining transfer results concern two constructions that are of
fundamental importance in combinatorial group theory~\cite{LySch77}, namely
HNN-extensions and amalgamated products. In their general form, HNN-extensions
have been used to construct groups with an undecidable word problem, which
means they may destroy desirable algorithmic properties. We consider the
special case of finite associated (resp. identified) subgroups, for which these
constructions already play a prominent role, for example, in Stallings'
decomposition of groups with infinitely many ends~\cite{Stal71} or the
construction of virtually free groups~\cite{DiDu89}.  Moreover, these
constructions are known to preserve a wide range of important structural and
algorithmic
properties~\cite{AllGre73,Bez98,HaLo11,KaSiSt06,KaWeMy05,KaSo70,KaSo71,LoSe06,LoSe08,MeRa04}.

Suppose $G=\langle \Sigma\mid R\rangle$ is a finitely generated group that has
two isomorphic subgroups $A$ and $B$ with an isomorphism $\varphi\colon A\to
B$. Then the corresponding \emph{HNN-extension} is the group \[ H=\langle G, t
\mid t^{-1}at=\varphi(a)~(a\in A)\rangle, \] where $t$ is a new letter not
contained in $G$. In other words, $H$ is the group $H=\langle \Sigma\cup \{t\}
\mid R\cup \{t^{-1}at=\varphi(a) \mid a\in A\}\rangle$ with $t\notin\Sigma$.
Intuitively, $H$ is obtained from $G$ by adding a new element $t$ such that
conjugating elements of $A$ with $t$ applies the isomorphism $\varphi$.  Here,
$t$ is called the \emph{stable letter} and the groups $A$ and $B$ are the
\emph{associated subgroups}.   A basic fact about HNN-extensions is that
the group $G$ embeds naturally into $H$~\cite{HiNeNe49}.

Here, we only consider the case that $A$ and $B$ are finite groups, so that we
may assume that $A\cup B\subseteq\Sigma$. To exploit the symmetry of the
situation, we use the notation $A(+1)=A$ and $A(-1)=B$. Then, we have
$\varphi^{\alpha}\colon A(\alpha)\to A(-\alpha)$ for $\alpha\in\{+1,-1\}$.  By
$h\colon (\Sigma^{\pm 1}\cup\{t,t^{-1}\})^*\to H$, we denote the canonical
morphism that maps each word to the element of $H$ it represents. 

A word $u\in (\Sigma^{\pm 1}\cup \{t,t^{-1}\})^*$ is called \emph{reduced} if
it does not contain a factor $t^{-\alpha} w t^\alpha$ with $\alpha\in\{-1,1\}$,
$w\in(\Sigma^{\pm 1})^*$, and $h(w)\in A(\alpha)$. Note that the equation
$t^{-1}at=\varphi(a)$, $a\in A$, allows us to replace such a factor
$t^{-\alpha}wt^{\alpha}$ by $\varphi^{\alpha}(h(w))\in A(-\alpha)\subseteq\Sigma$.  Since this
reduces the number of $t$'s in the word, this allows us to turn every word into
an equivalent reduced word. The following well-known fact describes the reduced words
representing the identity~\cite[Lemma 5]{LoSe08}.

\begin{lemma}\label{hnn:reducedidentity}
If $u\in(\Sigma^{\pm 1}\cup\{t,t^{-1}\})^*$ is a reduced word representing
$1\in H$, then $u\in(\Sigma^{\pm 1})^*$.
\end{lemma}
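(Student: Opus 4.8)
The plan is to recognize this as (the contrapositive of) Britton's Lemma for HNN-extensions, specialized to the situation at hand. Write the reduced word as $u = w_0 t^{\varepsilon_1} w_1 t^{\varepsilon_2} \cdots t^{\varepsilon_n} w_n$ with $n \ge 0$, each $\varepsilon_i \in \{-1,+1\}$ and each $w_i \in (\Sigma^{\pm 1})^*$, where $n$ is the number of occurrences of $t^{\pm 1}$ in $u$. The assertion is exactly that $n = 0$, so suppose for contradiction that $n \ge 1$. Unravelling the definition, ``reduced'' means precisely that there is no index $i$ with $\varepsilon_{i} = -\varepsilon_{i+1}$ and $h(w_i) \in A(\varepsilon_{i+1})$ — i.e.\ $u$ contains no \emph{pinch}. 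I would then invoke Britton's Lemma~\cite{LySch77}: a word of this form with $n \ge 1$ and no pinch does not represent $1$ in $H$, which gives the desired contradiction. Since the statement is classical, in the paper one can simply cite this.

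For a self-contained argument I would use the Bass--Serre tree $T$ of the HNN-extension. Its vertices are the left cosets $hG$ ($h \in H$) and its edges are the left cosets $hA$ (with $A = A(+1) \subseteq G$), the edge $hA$ joining $hG$ and $htG$; this is well-defined because $\varphi(A) = B \subseteq G$, and $T$ is a tree by the normal form theorem for HNN-extensions. The group $H$ acts on $T$ by left translation, with $\mathrm{Stab}(1 \cdot G) = G$. Associate to $u$ the walk $v_0, v_1, \ldots, v_n$ in $T$ given by $v_0 = 1 \cdot G$ and $v_j = (w_0 t^{\varepsilon_1} \cdots w_{j-1} t^{\varepsilon_j}) \cdot G$; consecutive vertices are adjacent (each $w_i \in G$ fixes the relevant base vertex, and $t^{\pm 1}$ moves it across one edge), and $v_n = u \cdot G$ because $w_n \in G$. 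The key observation is that $u$ being reduced is exactly the assertion that this walk never backtracks: a backtracking at $v_i$ would amount, via the incidence relation of $T$, to $\varepsilon_{i+1} = -\varepsilon_i$ together with $h(w_i) \in A(\varepsilon_{i+1})$, i.e.\ a pinch. A non-backtracking walk in a tree is a geodesic, so $v_n \ne v_0$ whenever $n \ge 1$; hence $u \cdot v_0 \ne v_0$, i.e.\ $u \notin G$, and in particular $u \ne 1$ in $H$ — contradiction. (Here $A$ and $B$ being finite makes the cosets easy to list but plays no essential role.)

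The step carrying the real content — and the one I expect to be the main obstacle — is the normal form theorem underlying the claim that $T$ is a tree and that $\mathrm{Stab}(1\cdot G) = G$. I would establish it by the van der Waerden permutation-representation trick: fix transversals of $A$ in $G$ and of $B$ in $G$, let $N$ be the set of ``normal form'' sequences $(g_0, \varepsilon_1, g_1, \ldots, \varepsilon_m, g_m)$ built from these transversals with no pinch, define left actions of the generators in $\Sigma$ and of $t^{\pm 1}$ on $N$ by prepend-and-renormalize rules, verify that the defining relations of $H$ are respected — the only delicate case being the action of $t^{\pm 1}$, which splits according to whether the leading group element of the normal form lies in the relevant associated subgroup — and conclude that the resulting homomorphism $H \to \mathrm{Sym}(N)$ carries $u$ to a permutation sending the trivial normal form to one of $t$-length $n \ge 1$; in particular $u$ does not act trivially, so $u \ne 1$. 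This case analysis is the technical heart of the matter, but it is routine, and given the citation to \cite{LoSe08} it need not be reproduced in the paper.
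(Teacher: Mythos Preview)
Your proposal is correct. The paper does not supply its own proof of this lemma; it simply states it as a well-known fact and cites \cite[Lemma~5]{LoSe08}. Your identification of the statement as (the contrapositive of) Britton's Lemma, together with the remark that a citation suffices, matches the paper's treatment exactly; the Bass--Serre tree argument and the van der Waerden normal-form sketch you outline are standard and correct, but go beyond what the paper provides.
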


Our algorithm for knapsack in HNN-extensions is an adaptation of the saturation
algorithm of Benois~\cite{Benois69} for the membership problem for rational
subsets of free groups.  Here, for each path spelling $aa^{-1}$, one adds a
parallel edge labeled with the empty word.  Since knapsack is a special case of
this problem, we have to use a suitable subclass of automata that is preserved
by our saturation and corresponds to the knapsack problem.

Let $G$ be a group with finite generating set $\Sigma$. A \emph{finite
automaton over $G$} is an NFA $\cA=(Q,\Sigma^{\pm 1}, \Delta, q_0, F)$. A
\emph{(directed) cycle} in $\cA$ is a sequence $p_1,\ldots,p_n$ of states such
that there are edges $(p_i, a_i, p_{i+1})$ for $1\le i\le n-1$ and
$(p_n, a_n, p_1)$ with $a_1,\ldots,a_n\in\Sigma^{\pm 1}$. In
particular, a single state with a loop is regarded as a cycle.  A sequence
$p_1,\ldots,p_n$ is an \emph{induced cycle} if it is a cycle and there are no
other edges among the states $p_1,\ldots,p_n$.  We call $\cA$ a \emph{knapsack
automaton} if every strongly connected component of $\cA$ is a singleton or an
induced cycle. The \emph{membership problem for knapsack automata over $G$} is
the following decision problem:

\smallskip
\noindent
{\bf Input:} A knapsack automaton $\cA$ over $G$ and a word $w\in(\Sigma^{\pm 1})^*$.

\smallskip
\noindent
{\bf Question:} Does $\cA$ accept a word $w'$ that represents the same element of $G$ as $w$?

\smallskip
\noindent
Indeed, the membership problem for knapsack automata corresponds precisely to
the knapsack problem in the following sense.

\begin{lemma}\label{knapsack-automata}
For each finitely generated group, knapsack belongs to {\sf NP} if and only if
membership for knapsack automata belongs to {\sf NP}.
\end{lemma}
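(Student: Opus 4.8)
The plan is to establish the equivalence by two polynomial-time reductions.

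For the easy direction, suppose membership for knapsack automata over $G$ is in {\sf NP}. Given a knapsack instance $u_1^{x_1}\cdots u_n^{x_n}=u$ over $G$ (the variables pairwise distinct; we may assume each $u_i$ is a nonempty word, otherwise the term is trivial), I would build a finite automaton $\cA$ over $G$ as a chain of $n$ loops: for each $i$ a directed cycle on $|u_i|$ fresh states whose consecutive edge labels spell $u_i$, these cycles linked in order by $\varepsilon$-edges, with an $\varepsilon$-edge from the start state into the first cycle and from the last cycle into a single final state. Then $L(\cA)=\{\,u_1^{k_1}\cdots u_n^{k_n}\mid k_1,\dots,k_n\in\mathbb N\,\}$ as a language over $\Sigma^{\pm1}$, so with the input word $w=u$ the automaton $\cA$ accepts a word representing the same element of $G$ as $w$ iff the knapsack instance is solvable. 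The nontrivial strongly connected components of $\cA$ are exactly the $n$ cycles, each an induced cycle since the only edges among its (fresh) states are its own cycle edges (for $|u_i|=1$ this is a single state with a self-loop, which is permitted); the remaining two components are trivial singletons. Hence $\cA$ is a knapsack automaton, the reduction is polynomial, and knapsack over $G$ lies in {\sf NP}.

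Conversely, suppose knapsack over $G$ is in {\sf NP}; by the polynomial inter-reducibility of knapsack and generalized knapsack (solvability of exponent equations in which each variable occurs at most once) over arbitrary finitely generated groups \cite{KoeLoZe15}, generalized knapsack over $G$ is in {\sf NP} as well. Let $(\cA,w)$ be a membership instance with $\cA=(Q,\Sigma^{\pm1},\Delta,q_0,F)$ a knapsack automaton. The crucial point is structural: since the condensation of $\cA$ is a DAG, any run of $\cA$ visits a sequence of \emph{pairwise distinct} strongly connected components $T_0,\dots,T_p$ (so $p<|Q|$), moving from $T_{i-1}$ to $T_i$ by a single edge. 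Inside an induced cycle there are no chords, so once the run has entered at some state it is forced to follow the cycle; the part of a run spent inside such a component reads a word $s^{k}u'$, where $s$ is the fixed cyclic word read starting at the entry state, $u'$ is the fixed prefix of $s$ determined by the exit state, and $k\in\mathbb N$ is arbitrary. A singleton component contributes either the empty word, or — if it carries a self-loop labeled $a$ — a word $a^{k}$ with $k\in\mathbb N$ arbitrary; each connecting edge contributes one fixed letter. Thus, once one fixes the combinatorial \emph{shape} of a run (the sequence of components visited, the entry and exit states in the cyclic ones, and the connecting edges — data of size polynomial in $|\cA|$), the word read along any run of that shape has the form
\[
 g_0\,s_1^{k_1}\,g_1\,s_2^{k_2}\,g_2\cdots s_m^{k_m}\,g_m ,
\]
with $m\le|Q|$, the $g_i,s_i\in(\Sigma^{\pm1})^*$ fixed words of total length polynomial in $|\cA|$, and $k_1,\dots,k_m$ ranging independently over $\mathbb N$. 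Hence $\cA$ accepts a word representing the same element of $G$ as $w$ iff there is a shape for which the exponent equation $g_0\,s_1^{x_1}\,g_1\cdots s_m^{x_m}\,g_m\,w^{-1}=1$ over $G$ — in which every variable occurs exactly once — is solvable. The {\sf NP} algorithm guesses a shape, constructs this equation in polynomial time, and calls the {\sf NP} procedure for generalized knapsack over $G$; this puts membership for knapsack automata over $G$ in {\sf NP}.

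The routine parts are verifying that the automaton built in the first reduction is a knapsack automaton and bounding word lengths in the second. The main obstacle — really the heart of the argument — is the structural decomposition of runs of a knapsack automaton into a bounded number of independent "power segments" interleaved with fixed words, which is what makes the shape guessable in {\sf NP} and collapses the problem to a single generalized-knapsack instance. This rests on the two facts that (i) the component DAG is traversed without repetition, bounding the number of power segments by $|Q|$, and (ii) inside an induced cycle the continuation of the run is deterministic, so the only remaining freedom is how many times it wraps around.
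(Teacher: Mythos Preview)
Your proof is correct and follows essentially the same approach as the paper: the easy direction builds a chain-of-cycles automaton accepting $u_1^*\cdots u_n^*$, and for the converse you guess what the paper calls the \emph{skeleton} of a run (your ``shape''), observe that the induced-cycle condition forces the words along runs with a fixed skeleton to have the form $v_0u_1^{x_1}v_1\cdots u_m^{x_m}v_m$ with pairwise distinct variables, and then appeal to the inter-reducibility of knapsack and generalized knapsack from \cite{KoeLoZe15}. Your write-up is in fact somewhat more detailed than the paper's on the structural analysis inside components; the only cosmetic point is that the paper's NFAs over $G$ do not a priori carry $\varepsilon$-edges, but this is harmless (and the paper itself uses $\varepsilon$-edges later).
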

\begin{proof}
It is easy to turn a knapsack instance into a knapsack automaton: Given words
$w_1,\ldots,w_k,w\in (\Sigma^{\pm 1})^*$, one can clearly construct a knapsack
automaton accepting $w_1^*\cdots w_k^*$. Then, the knapsack problem amounts to
deciding the membership problem for $w$.

Now, suppose we are given a knapsack automaton $\cA$ over $G$ and a word
$w\in(\Sigma^{\pm 1})^*$.  We can clearly turn $\cA$ into a knapsack automaton
that first reads $w^{-1}$ and then behaves like $\cA$.  Therefore, it suffices
to solve the membership problem in the case that $w=\varepsilon$.

Consider a run $r$ in $\cA$ from the initial to a final state.  Let
$c_1,\ldots,c_n$ be the sequence of strongly connected components it visits.
For each $c_i$ that is not a singleton, let $p_i$ and $q_i$ be the state where
$r$ enters and leaves $c_i$, respectively. We call the sequence
$c_1,\ldots,c_n$, together with the $p_i$ and $q_i$ the \emph{skeleton} of $r$.

Our algorithm guesses a skeleton.  Since $\cA$ is a knapsack automaton, from
this skeleton, we can determine words
$v_0,u_1,v_1,\ldots,u_n,v_n\in(\Sigma^{\pm 1})^*$ such that 
$v_0u_1^*v_1\cdots u_n^*v_n$ is precisely the set of words labeling a path
with this skeleton. Hence, deciding
the membership problem for $\cA$ amounts to checking whether there are
$x_1,\ldots,x_n\in\mathbb{N}$ with $h_0g_1^{x_1}h_1\cdots g_n^{x_n}h_n=1$,
where $g_i$ ($h_j$, respectively) is the element represented by $u_i$ ($v_j$,
respectively).  This is an exponential equation with pairwise distinct variables
and the solvability of such equations is called the \emph{generalized knapsack
problem} in~\cite{KoeLoZe15}, where it was shown to be polynomially
inter-reducible with the knapsack problem.
\end{proof}

\begin{theorem}\label{thm-hnn}
Let $H$ be an HNN-extension of the finitely generated group $G$ with finite
associated subgroups.  If knapsack for $G$ belongs to {\sf NP}, then the same
holds for $H$.
\end{theorem}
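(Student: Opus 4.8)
The plan is to invoke Lemma~\ref{knapsack-automata}, so that it suffices to place the membership problem for knapsack automata over $H$ in {\sf NP}. Given a knapsack automaton $\cA$ over $H$ (whose edges carry labels in $\Sigma^{\pm 1}\cup\{t,t^{-1}\}$) and a word $w$, one first folds $w^{-1}$ into $\cA$ exactly as in the proof of Lemma~\ref{knapsack-automata}, reducing the task to: does $\cA$ accept some word $u$ with $h(u)=1$ in $H$? The strategy is then a Benois-style saturation of $\cA$ that removes the stable letter $t$, allowing $\varepsilon$-edges. We close $\cA$ under the following edge-addition rules: \textbf{(i)} if $p\xrightarrow{a}r\xrightarrow{a^{-1}}q$ with $a\in\Sigma^{\pm1}\cup\{t,t^{-1}\}$, add an $\varepsilon$-edge $p\to q$; \textbf{(ii)} compose $\varepsilon$-edges with each other and with labelled edges; \textbf{(iii)} (pinch) if $p\xrightarrow{t^{-\alpha}}r_1$ and $r_2\xrightarrow{t^{\alpha}}q$ are edges ($\alpha\in\{-1,1\}$) and there is a path from $r_1$ to $r_2$ using only $\Sigma^{\pm1}$- and $\varepsilon$-edges whose product in $G$ is some $g\in A(\alpha)$, then add an edge $p\xrightarrow{b}q$ with $b\in A(-\alpha)\subseteq\Sigma$ chosen so that $h(b)=\varphi^{\alpha}(g)$. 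Rule~(iii) is precisely a Britton reduction $t^{-\alpha}gt^{\alpha}=\varphi^{\alpha}(g)$ read off the automaton; note that it never introduces a new $t$-edge.

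Knapsack for $G$ enters through rule~(iii). To decide whether it applies, and with which $b$, it is enough --- since $A(\alpha)$ is a fixed finite set --- to decide, for each $g\in A(\alpha)$, whether the subautomaton of the current machine consisting of its $\Sigma^{\pm1}$- and $\varepsilon$-labelled edges has a path from $r_1$ to $r_2$ whose value in $G$ equals $g$. If that subautomaton is (a mild relaxation of) a knapsack automaton over $G$, the skeleton argument from the proof of Lemma~\ref{knapsack-automata} turns this into the solvability of a generalized knapsack instance over $G$, which is in {\sf NP} by hypothesis; as there are only constantly many $g$ to try, each saturation step costs one {\sf NP}-oracle call for $G$. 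For termination and the global bound, observe that every rule adds an edge labelled by $\Sigma^{\pm1}\cup\{\varepsilon\}$, so the process stabilises after at most $|Q|^2\,(|\Sigma^{\pm1}|+1)$ additions; an {\sf NP} procedure guesses the entire sequence of added edges together with the certificates justifying the pinches and verifies them. Finally, by Lemma~\ref{hnn:reducedidentity} a reduced word equal to $1$ in $H$ contains no $t$, so --- using soundness of the saturation together with completeness, proved by induction on the number of Britton reductions needed --- $\cA$ accepts a word $u$ with $h(u)=1$ if and only if in the saturated automaton there is an initial-to-final path using only $\Sigma^{\pm1}$- and $\varepsilon$-edges whose value is $1$ in $G$; by Lemma~\ref{knapsack-automata} applied to $G$ this last test is again in {\sf NP}.

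The main obstacle is structural bookkeeping: one must identify an automaton class that (a) corresponds to knapsack, so that Lemma~\ref{knapsack-automata} applies to the pinch queries, and (b) is closed under the saturation rules above --- the plain condition ``every strongly connected component is a singleton or an induced cycle'' is delicate to preserve once chords are added. Closely related is the treatment of cycles of $\cA$ that already contain $t$ or $t^{-1}$: pumped, such a cycle yields unboundedly many occurrences of $t$. Here one can use the retraction $H\to\mathbb{Z}$ that kills $G$ and sends $t\mapsto1$ (so that the total $t$-exponent of any word representing $1$ must vanish), combined with a preprocessing that unfolds a $t$-balanced cycle into a $t$-edge, an inner $\Sigma^{\pm1}$-cycle, and a $t^{-1}$-edge, thereby reducing to the case handled by rules~(i)--(iii). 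Making all of this fit together while keeping the number of saturation rounds and the sizes of the exponent equations over $G$ polynomial is the crux of the argument.
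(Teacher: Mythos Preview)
Your high-level plan---reduce via Lemma~\ref{knapsack-automata} to membership for knapsack automata, then run a Benois-style saturation that short-cuts Britton pinches $t^{-\alpha}gt^{\alpha}\to\varphi^{\alpha}(g)$, and finally strip the $t$-edges and call the $G$-oracle---is exactly the paper's strategy. But the proposal has a real gap, which you yourself flag as ``the main obstacle'' without resolving: your rules (i)--(iii) only \emph{add} edges, and the moment rule~(iii) inserts a shortcut between two states on the same induced cycle, that cycle acquires a chord and the automaton ceases to be a knapsack automaton. From then on the $\Sigma^{\pm1}$-subautomaton you hand to the $G$-oracle in later pinch checks is no longer of knapsack type, so Lemma~\ref{knapsack-automata} does not apply and the recursion is unjustified. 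Your proposed patch---use the retraction $H\to\mathbb{Z}$ to unfold a $t$-balanced cycle into ``a $t$-edge, an inner $\Sigma^{\pm1}$-cycle, and a $t^{-1}$-edge''---does not work in general: a cycle labelled, say, $t\,a\,t^{-1}\,b\,t\,c\,t^{-1}\,d$ is $t$-balanced but admits no such decomposition, and iterating it produces arbitrarily deep nested pinches.

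The paper's fix is to treat reduction paths \emph{inside} a cycle (Phase~1) not by adding a chord but by \emph{excising} the path: the interior states of the reduction path are deleted, a single shortcut edge $p\xrightarrow{\varphi^{\alpha}(a)}q$ replaces them (so the cycle becomes strictly shorter yet remains induced), and for every edge that previously entered or left a deleted state one glues in a fresh acyclic spur that replays the lost prefix or suffix of the old cycle. A counting argument shows the cycle's degree is unchanged and the total number of spur states added over all Phase~1 steps is polynomial. Only once no cycle contains a reduction path does Phase~2 add shortcut edges between \emph{distinct} strongly connected components, where chords are harmless for the knapsack-automaton invariant. This surgery is the missing ingredient in your argument; the rest of your outline (guessing the saturation sequence, verifying each pinch with one {\sf NP}-call to $G$, then appealing to Lemma~\ref{hnn:reducedidentity}) matches the paper.
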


\begin{proof}
According to Lemma~\ref{knapsack-automata}, it suffices to prove that if
membership for knapsack automata over $G$ belongs to {\sf NP}, then the same holds
for $H$. Hence, let $\cA$ be a knapsack automaton over $H$. As explained above,
it suffices to check membership for the group identity, i.e., to check whether $\cA$
accepts a word from $h^{-1}(1)$.

The basic idea of the proof is to saturate $\cA$, yielding a knapsack automaton
that is \emph{saturated}, meaning: For each path from $p$ to $q$ labeled with a
word $t^{-\alpha}wt^\alpha$ with $h(w)\in A(\alpha)$, there is an edge from $p$
to $q$ labeled with $\varphi^\alpha(h(w))\in A(-\alpha)$. We will then show that
$\cA$ accepts a word from  $h^{-1}(1)$ if and only if it accepts a word from  
$h^{-1}(1) \cap (\Sigma^{\pm 1})^*$.  
This will allow us to remove
all $t^{\pm 1}$-edges and apply the algorithm for $G$. A path in a
knapsack automaton that is labeled by a word $t^{-\alpha}wt^\alpha$ with $w\in
(\Sigma^{\pm 1})^* \cap h^{-1}(A(\alpha))$ is called  a \emph{reduction path}.
Among other things, the algorithm will introduce a \emph{shortcut edge} for the
reduction path, namely
\begin{equation}
p\xrightarrow{\varphi^{\alpha}(a)}q, \label{hnn:shortcut}
\end{equation}
where $a = h(w) \in A(\alpha)$.
Observe that $\varphi^{\alpha}(a)\in A(-\alpha)$ and
$\varphi^\alpha(a)=h(t^{-\alpha}wt^\alpha)$.  By introducing intermediate
states, we may assume that (i)~there is no edge between states that belong to
distinct cycles and (ii)~the initial and the final state do not lie on a cycle.

\paragraph{Phase 1.} The saturation proceeds in two phases. In the first phase, we
saturate the directed cycles, which are the strongly connected components. This
means, we modify the automaton so that there is no reduction path between two
states on a cycle. This is done as follows. We successively guess tuples
$(p,\alpha,a,q)$ where $p$ and $q$ are states from the same cycle,
$\alpha\in\{-1,1\}$, and $a\in A(\alpha)$. Then, employing the {\sf NP}
algorithm for $G$, we can clearly verify that there is a reduction path
spelling $t^{-\alpha}wt^\alpha$ from $p$ to $q$ with $w\in (\Sigma^{\pm 1})^*$ and 
$h(w)=a$. Note that on this path, the first letter ($t^{-\alpha}$) occurs only
once, meaning the path visits each state at most once (i.e. it makes at most
one round in the cycle).  Let
\begin{equation} p=r_0 \xrightarrow{u_1} r_1 \cdots \xrightarrow{u_n} r_n=q \label{hnn:redpath}\end{equation}
be the reduction path and let
\[ q=r_n\xrightarrow{u_{n+1}}r_{n+1}\cdots \xrightarrow{u_m} r_m=r_0=p \]
be the rest of the cycle with $u_1,\ldots,u_m\in\Sigma^{\pm
1}\cup\{t,t^{-1}\}$ and $u_1\cdots u_n=t^{-\alpha}wt^\alpha$. 
In particular, $m$ is the length of the cycle. Let us now
describe the saturation step.  We remove all edges from (\ref{hnn:redpath}) and
all states incident to them, except for $p$ and $q$.  Instead, we add a shortcut edge (\ref{hnn:shortcut}).
For each state $s$ not on the cycle and
for which there is an edge $(s,v,r_i)$, $1\le i\le n-1$, we glue in a path
\begin{equation} s\xrightarrow{v}s_0\xrightarrow{u_{i+1}}s_1\cdots \xrightarrow{u_n}s_{n-i}=q, \label{hnn:states:arriving} \end{equation}
where $s_0,\ldots,s_{n-i-1}$ are new states. Analogously, for each state $s$
not on the cycle and for which there is an edge $(r_i,v,s)$, $1\le i\le n-1$,
we glue in a path
\begin{equation} p=s_0\xrightarrow{u_1}s_1\cdots\xrightarrow{u_i} s_i\xrightarrow{v} s,  \label{hnn:states:leaving}\end{equation}
where $s_1,\ldots,s_i$ are new states. Moreover, for each pair $(s,s')$ of
states not on the cycle and for which there are edges $(s,v,r_i)$ and
$(r_j,v',s')$ with $1\le i<j\le n-1$, we glue in a path
\begin{equation} s \xrightarrow{v} s_0 \xrightarrow{u_{i+1}} s_{1} \cdots \xrightarrow{u_j} s_{j-i} \xrightarrow{v'} s', \label{hnn:states:pairs} \end{equation}
where $s_0,\ldots,s_{j-i}$ are new states.  This completes our saturation step.

Let $\cA'$ be the automaton resulting from one saturation step from $\cA$.
Then, $\cA'$ is clearly a knapsack automaton: We only connect states that were
connected before. Moreover, for states $s,s'$ that exists in $\cA$ and in
$\cA'$, the set of group elements represented on paths from $s$ to $s'$ does
not change. Indeed, a path that avoids our cycle still exists.  A path that
involves the whole path (\ref{hnn:redpath}) can use the shortcut edge
(\ref{hnn:shortcut}). A path that either (i)~enters (\ref{hnn:redpath})
after $p$ and follows it until $q$ or (ii)~follows (\ref{hnn:redpath}) partly
and then leaves before $q$ can use the new paths (\ref{hnn:states:arriving}) or
(\ref{hnn:states:leaving}), respectively. Finally, a path that follows only a
part of (\ref{hnn:redpath}) that starts after $p$ and ends before $q$ can use
the new path (\ref{hnn:states:pairs}) instead.

Let us estimate the number of added states during Phase 1. The \emph{degree} of
a cycle is the number of edges entering or leaving the cycle. Let $d$ be the
degree of our cycle. Let us first consider a single saturation step. The new
states of type (\ref{hnn:states:arriving}) or (\ref{hnn:states:leaving}) are
each at most $d\cdot n$ many.  The new states of type (\ref{hnn:states:pairs})
are at most $d^2\cdot n$ many. Hence, we add at most $(d^2+2d)n\le (d^2+2d)m$
states in this saturation step. Observe that in this step, the length of the
affected cycle decreases ($t^{-\alpha}wt^\alpha$ has length $\ge 2$ and
$h(t^{-\alpha}wt^\alpha)\in A(-\alpha)$ has length $1$) and its degree is
unchanged (the new edges from
(\ref{hnn:states:arriving}) and (\ref{hnn:states:leaving}) clearly preserve the
degree and those of (\ref{hnn:states:pairs}) do not increase the degree because
by our assumption that no edge connects two cycles, $s$ and $s'$ do not belong to a cycle). Now, we consider the
whole phase. Suppose in the beginning, $\cA$ has $c$ cycles of maximal degree
$d$ and maximal length $\ell$. Then, each saturation step adds at most
$(d^2+2d)\ell$ states.  Moreover, there can be at most $\ell\cdot c$ saturation
steps, so that the first phase adds at most $(d^2+2d)\ell^2c$ states, which is
polynomial in the size of the input automaton.

\paragraph{Phase 2.} In the second phase, we consider reduction paths between states
that belong to distinct strongly connected components. Since here, adding an
edge that runs parallel to the reduction path cannot violate the property of
being a knapsack automaton, we may saturate by simply introducing new edges.

Again, we successively guess tuples $(p,\alpha,a,q)$ where $\alpha\in\{-1,1\}$,
and $a\in A(\alpha)$. However, we require that $p$ and $q$ are not from the
same strongly connected component and that there is no shortcut edge
(\ref{hnn:shortcut}) yet. As above, we employ the {\sf NP} algorithm for $G$ to
verify that there is a reduction path spelling $t^{-\alpha}wt^\alpha$ from $p$
to $q$ with $w\in (\Sigma^{\pm 1})^*$ and $h(w)=a$. Then, we add the shortcut
edge~(\ref{hnn:shortcut}).  As before, we have
$h(t^{-\alpha}wt^\alpha)=\varphi^\alpha(a)\in A(-\alpha)$.  This is all we do
in the saturation step. Since now, we only add edges (and no states) and each
correct guess leads to an increase in the number of edges, our sequence of
saturation steps must terminate after a polynomial number of steps. This
concludes the second phase and thus the saturation.

\medskip
\noindent
Finally, the algorithm applies the {\sf NP}-algorithm for $G$. More precisely,
we remove all edges labeled $\{t,t^{-1}\}$. This yields a knapsack automaton
over $G$, so that we can use the algorithm for $G$ to check whether it accepts
$1\in G$. Then, we answer ``yes'' if and only if the algorithm for $G$ does.

It remains to be shown that this algorithm is sound and complete. If we answer
``yes'', then the input automaton accepts $1\in H$. This is because each
saturation step preserves the set of accepted elements. On the other hand,
suppose the input automaton $\cA$ accepts $1\in H$ and consider the branch of
our nondeterministic algorithm that guesses in such a way that in the end, there are no more
reduction paths without a shortcut edge. Let $\cB$ be the resulting saturated
knapsack automaton. Since $\cA$ accepts $1\in H$, there is an accepting run in
$\cB$ that accepts $1\in H$. Consider such a run reading a word $u\in
(\Sigma^{\pm 1}\cup \{t,t^{-1}\})^*$ with a minimal number of occurrences of
$t$. Since $\cB$ is saturated, this implies that $u$ is reduced: Otherwise, $u$
would have a factor $t^{-\alpha}wt^{\alpha}$ with $w\in (\Sigma^{\pm 1})^*$ and
$h(w)\in A(\alpha)$.  This factor, however, lies on a reduction path and we
could have used the shortcut edge instead, which would result in a run with
fewer $t$'s. Since $u$ is reduced and represents $1\in H$, it contains neither
$t$ nor $t^{-1}$ (Lemma~\ref{hnn:reducedidentity}).  Hence, our application of
the algorithm for $G$ answers ``yes'' because of $u$.
\end{proof}

\noindent
In our last transfer theorem, we consider amalgamated free products. For
$i\in\{0,1\}$, let $G_i=\langle \Sigma_i \mid R_i\rangle$ be a finitely
generated group  and let $F$ be a finite group that is embedded in each $G_i$,
meaning that there are injective morphisms $\varphi_i\colon F\to G_i$ for
$i\in\{0,1\}$.    Then, the \emph{free product with amalgamation with
identified subgroup $F$} is defined as
\[ G_0 *_F G_1 = \langle G_0*G_1 \mid \varphi_0(f)=\varphi_1(f)~(f\in F)\rangle. \]
Here, $G_0*G_1$ denotes the free product $G_0*G_1=\langle \Sigma_0\uplus \Sigma_1 \mid R_0\uplus
R_1\rangle$.  Note that the product depends on the morphisms $\varphi_i$,
although they are omitted in the notation $G_0 *_F G_1$.  Equivalently, $G_0 *_F G_1$
is given by the presentation 
\[ \langle \Sigma_0\uplus \Sigma_1 \mid R\uplus S\cup \{\varphi_0(f)=\varphi_1(f) \mid f\in F\}\rangle. \]

Let us consider the free product $G_0*G_1$.  Let $h\colon (\Sigma_0^{\pm
1}\cup\Sigma_1^{\pm 1})^*\to G_0 * G_1$ be the canonical morphism that maps a
word to the group element it represents.  If $w\in (\Sigma_0^{\pm
1}\cup\Sigma_1^{\pm 1})^*$, then a \emph{syllable of $w$} is a factor of $w$
that is contained in $(\Sigma_0^{\pm 1})^+\cup (\Sigma_1^{\pm 1})^+$ and that
is maximal with this property.  The definition of the free product immediately
implies the following.
\begin{lemma}\label{amalg:identity}
If in the free product $G_0*G_1$, a word represents $1\in G_0*G_1$, then it
contains a syllable $s$ with $h(s)=1$. 
\end{lemma}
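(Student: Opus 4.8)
The plan is to prove the contrapositive: if a word $w\in(\Sigma_0^{\pm 1}\cup\Sigma_1^{\pm 1})^*$ has no syllable $s$ with $h(s)=1$, then $h(w)\neq 1$ in $G_0*G_1$. First I would factor $w$ into its syllables, $w=s_1s_2\cdots s_k$, where each $s_j$ lies in $(\Sigma_0^{\pm 1})^+$ or in $(\Sigma_1^{\pm 1})^+$. Since $\Sigma_0$ and $\Sigma_1$ are disjoint, every letter of $w$ belongs to exactly one of the two alphabets, so this syllable factorization is unique, and by the maximality condition in the definition of a syllable, consecutive syllables $s_j$ and $s_{j+1}$ belong to different factors. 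If $k=0$ then $w=\varepsilon$ and the statement holds vacuously; so assume $k\geq 1$.

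Next I would invoke the normal form theorem for free products (see, e.g., \cite{LySch77}): whenever $g_1,\ldots,g_k$ is a sequence with each $g_j$ lying in $G_0$ or $G_1$, with $g_j\neq 1$ for all $j$ and with $g_j,g_{j+1}$ in distinct factors for all $j<k$, one has $g_1g_2\cdots g_k\neq 1$ in $G_0*G_1$. Applying this with $g_j=h(s_j)$ is legitimate: by assumption $h(s_j)\neq 1$ for every $j$, and the $h(s_j)$ alternate between the two factors because the syllables $s_j$ do. Hence $h(w)=h(s_1)h(s_2)\cdots h(s_k)\neq 1$, which is exactly the contrapositive of the lemma.

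There is no real obstacle here; the only point requiring a moment's care is that the syllable decomposition of $w$ genuinely yields an alternating sequence of nontrivial factor elements, which is immediate from the disjointness of $\Sigma_0$ and $\Sigma_1$ together with the maximality built into the notion of a syllable. All the substance is carried by the normal form theorem for free products, which the paper may simply quote; alternatively one could give a self-contained argument via the standard action of $G_0*G_1$ on the set of reduced sequences, but quoting the classical result keeps the argument as short as the word ``immediately'' in the statement suggests.
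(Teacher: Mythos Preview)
Your argument is correct and is precisely the standard justification the paper has in mind: the paper gives no proof at all, merely remarking that the lemma follows immediately from the definition of the free product, which amounts to invoking the normal form theorem exactly as you do.

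One small slip: the case $k=0$ (i.e.\ $w=\varepsilon$) does \emph{not} hold vacuously in your contrapositive---the empty word has no syllable whatsoever, hence certainly none with $h(s)=1$, yet it represents $1$. So the lemma as literally stated is false for $w=\varepsilon$. This is a harmless imprecision in the lemma's statement rather than in your method (and the paper only applies the lemma to words with more than one syllable), so simply restrict to nonempty $w$ from the outset rather than claiming the empty case is vacuous.
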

The transfer theorem states that taking amalgamated products with finite
identified subgroups preserves {\sf NP} membership of knapsack.
\begin{theorem}
Let $G_0$ and $G_1$ be finitely generated groups with a common finite subgroup $F$.
If knapsack for $G_0$ and for $G_1$ belongs to {\sf NP}, then the same holds for
the amalgamated product $G_0 *_F G_1$.
\end{theorem}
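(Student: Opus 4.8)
The plan is to proceed in close analogy with the proof of Theorem~\ref{thm-hnn}. By Lemma~\ref{knapsack-automata} it suffices to show that if the membership problem for knapsack automata over $G_0$ and over $G_1$ is in {\sf NP}, then so is the membership problem for knapsack automata over $G := G_0 *_F G_1$; and exactly as in the HNN case one prepends $w^{-1}$ to reduce to testing whether a given knapsack automaton $\cA$ over $G$ accepts a word representing $1 \in G$. Since $F$ is finite we may assume $F \subseteq \Sigma_0 \cap \Sigma_1$, so every $f \in F$ is simultaneously a letter of $\Sigma_0^{\pm 1}$ and of $\Sigma_1^{\pm 1}$. The analogue of a reduction path is a \emph{crossing path}: a path in $\cA$ from $p$ to $q$ whose label $w$ lies in $(\Sigma_i^{\pm 1})^*$ for a single $i\in\{0,1\}$ and satisfies $h(w)\in F$. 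If $h(w)=f$, then $w$ and $f$ represent the same element of $G$, so adding a \emph{shortcut edge} $p\xrightarrow{f}q$ (with $f$ read, at will, as a letter of $\Sigma_0$ or of $\Sigma_1$) changes neither the set of group elements accepted between two states nor the knapsack-automaton property when $p,q$ lie in distinct strongly connected components.

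The existence of a crossing path from $p$ to $q$ spelling a given $f\in F$ can be checked in {\sf NP}: guess $i$ and $f$, delete from $\cA$ all edges not labelled over $\Sigma_i^{\pm 1}$, and ask the {\sf NP} algorithm for $G_i$ whether the resulting knapsack automaton (with initial state $p$, final state $q$) accepts a word representing $f$; note that the shortcut edges introduced earlier survive this restriction for \emph{both} values of $i$, since their labels lie in $F\subseteq\Sigma_i$. Saturating $\cA$ by all such shortcut edges — first inside cycles, then between strongly connected components, iterating until no new shortcut can be added (which terminates after polynomially many steps, as each adds an edge) — one shows, using the normal form theorem for amalgamated products (equivalently, Lemma~\ref{amalg:identity} together with the relations $\varphi_0(f)=\varphi_1(f)$), that the saturated automaton accepts $1\in G$ if and only if for some $i\in\{0,1\}$ its restriction $\cA_i$ to the $\Sigma_i^{\pm 1}$-edges accepts $1\in G_i$. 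Indeed, an accepting run with a minimal number of syllables can have at most one syllable: a run of $\ge 2$ syllables representing $1$ contains, by the normal form theorem, a syllable over some $\Sigma_i$ representing an element $f\in F$, and the sub-path reading it admits a shortcut edge $p\xrightarrow{f}q$; reading $f$ into the neighbouring factor merges that syllable with its neighbour(s) and yields a run with strictly fewer syllables, a contradiction. A single-syllable accepting word over $\Sigma_i^{\pm 1}$ that represents $1\in G$ represents $1\in G_i$ because $G_i$ embeds into $G$, and the empty word is trivial. Hence the algorithm finishes by running the {\sf NP} algorithm for $G_i$ on $\cA_i$ for $i=0,1$ and answering ``yes'' iff one of them succeeds.

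The delicate point — and here is where the argument is genuinely harder than in the HNN setting — is the within-cycle saturation: adding a shortcut between two states on the same cycle destroys the knapsack-automaton property, so it must be handled by surgery as in Phase~1 of Theorem~\ref{thm-hnn} (cut out the relevant sub-path, glue in bypasses for the states hanging off it, insert the shortcut edge). An HNN reduction path contains the letter $t^{-\alpha}$ exactly once and therefore winds around a cycle at most once, whereas a crossing path labelled over $\Sigma_i^{\pm 1}$ carries no such marker and may wind arbitrarily often around a \emph{monochromatic} cycle (one all of whose edges are labelled over $\Sigma_i^{\pm 1}$). I would treat a \emph{bichromatic} cycle as in the HNN case: a crossing path inside it cannot use the off-colour edge, hence is confined to a proper arc, so only boundedly many crossing paths need to be considered and each surgery strictly shortens the cycle. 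For a monochromatic $i$-cycle one exploits that $F$ is finite: if the element $g\in G_i$ spelled by one full turn satisfies $g^k\in F$ for some $k\ge 1$, then $g$ has finite order, $\langle g\rangle$ is finite, and the cycle can be unrolled (or the $i$-coloured shortcut absorbed) in a bounded way; monochromatic cycles that never produce an element of $F$ require no within-cycle saturation and are simply left intact for the restriction $\cA_i$, where the {\sf NP} algorithm for $G_i$ accounts for them. Checking that all of this can be carried out while keeping the automaton of polynomial size, and verifying termination of the two phases, is the technical heart of the proof.
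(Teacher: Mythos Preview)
Your route diverges from the paper's in a way worth noting. The paper does not saturate the amalgamated product directly. Instead it invokes the classical embedding of $G_0*_F G_1$ into the HNN-extension
\[
I=\langle G_0*G_1,\,t\mid t^{-1}\varphi_0(f)t=\varphi_1(f)\ (f\in F)\rangle
\]
of the \emph{free} product $G_0*G_1$. Since Theorem~\ref{thm-hnn} already handles HNN-extensions with finite associated subgroups, the whole statement reduces to showing that free products preserve {\sf NP}-membership of knapsack. The paper then reruns the saturation for $G_0*G_1$, where a reduction path is one labelled over a single $\Sigma_i^{\pm 1}$ and representing $1$ (not an arbitrary element of $F$), and shortcut edges are $\varepsilon$-edges. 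Crucially, Phase~1 is restricted by an extra condition~(b) to cycles that carry at least one letter of the \emph{other} colour, together with an invariant~(iii) that every edge entering a cycle is $\varepsilon$-labelled; monochromatic cycles are never touched in Phase~1, and invariant~(iii) is what makes the minimal-syllable argument go through in their presence (one steps back along the entering $\varepsilon$-edge and lands in a different component, reducing to Phase~2).

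Your direct approach carries $F$ through the saturation, and that is exactly what creates the difficulty you flag but do not close. There is a genuine gap in the monochromatic-cycle case. First, your dichotomy on whether some power $g^k$ of the full turn lies in $F$ is not the relevant one: a within-cycle crossing path from $p$ to $q$ represents an element of the form $g_{p,q}\,g^k$ with $g_{p,q}$ a proper arc, and this can lie in $F$ even when no $g^k$ does, so ``monochromatic cycles that never produce an element of $F$'' is not the same as ``no $g^k\in F$''. Second, and more to the point, you never actually need within-cycle saturation on monochromatic cycles once you impose the paper's invariant~(iii): if a minimal-syllable run reads a syllable $s\in(\Sigma_i^{\pm 1})^+$ with $h(s)\in F$ between states $p,q$ on a monochromatic $\Sigma_i$-cycle, maximality of $s$ forces the run to enter the cycle at $p$, and the $\varepsilon$-edge into $p$ from some $p'$ outside makes the path $p'\to q$ a crossing path between distinct components, handled by Phase~2. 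The embedding trick is what makes this invariant self-maintaining (shortcuts are $\varepsilon$); in your setting shortcuts carry letters from $F$, so preserving an analogous invariant requires additional bookkeeping that you have not supplied.
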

\begin{proof}
It is well-known~\cite[Theorem~2.6, p.~187]{LySch77} that $G_0 *_F G_1$ can be
embedded into the HNN-extension
\[ I = \langle G_0*G_1, t \mid t^{-1}\varphi_0(f)t=\varphi_1(f)~(f\in F) \rangle \]
by way of the morphism $\Phi\colon G_0 *_F G_1\to I$ with
\[ \Phi(g)=\begin{cases} t^{-1}gt & \text{if $g\in G_0$} \\ g & \text{if $g\in G_1$}. \end{cases} \]
Since Theorem~\ref{thm-hnn} already tells us that {\sf NP} membership of
knapsack is preserved by HNN-extensions with finite associated subgroups, it
suffices to show that free products preserve {\sf NP} membership.

We use a slight modification of the nondeterministic algorithm from the proof
of Theorem~\ref{thm-hnn} and show that if membership for knapsack automata
belongs to {\sf NP} for $G_0$ and $G_1$, the same holds for $G_0 * G_1$.
During the saturation, we maintain the following invariants:
\begin{enumerate}[(i)]
\item There is no edge between states
that belong to distinct cycles.
\item The initial and the final states do not
lie on a cycle.
\item Every edge entering a cycle is labeled
with the empty word $\varepsilon$.
\end{enumerate}
By introducing intermediate states, we can clearly achieve them in the
beginning.  As in the proof of Theorem~\ref{thm-hnn}, we add shortcut edges for
reduction paths. For states $p$ and $q$, a \emph{reduction path (from $p$ to
$q$)} is a path labeled by a word $w\in (\Sigma_i^{\pm 1})^+$ for some $i\in
\{0,1\}$ such that (a)~$h(w)=1$ and (b)~if $p$ and $q$ lie on a cycle, then
this cycle also contains a letter in $\Sigma_{1-i}^{\pm 1}$.  Here, we need the
additional condition (b) to make sure that short-cutting a reduction path
actually reduces the cycle (Without requiring (b), it could happen that a
reduction path occupies more than one round of a cycle.) A \emph{shortcut edge}
is then simply $(p,\varepsilon,q)$.

Again, our saturation consists of two phases and in the first one, we shortcut
reduction paths inside of cycles. We guess tuples $(p,i,q)$ such that $p$ and
$q$ lie on a cycle and $i\in\{0,1\}$.  Using the {\sf NP}-algorithm for $G_i$,
we verify that there is a reduction path from $p$ to $q$ labeled with
$w\in(\Sigma_i^{\pm 1})^+$. Then, we proceed as in the proof of
Theorem~\ref{thm-hnn} and replace the reduction path with a shortcut edge and
add new paths almost as in (\ref{hnn:states:arriving}),
(\ref{hnn:states:leaving}), and (\ref{hnn:states:pairs}): The only difference
is that the new paths of type (\ref{hnn:states:arriving}) are are prolonged
with an $\varepsilon$-edge at the end so as to preserve invariant (iii).

While in the proof of Theorem~\ref{thm-hnn}, the length the cycle decreases in
a saturation step, this is not guaranteed here. This is because in the proof of
Theorem~\ref{thm-hnn}, we always remove edges labeled $t$ and $t^{-1}$. Here,
it could happen that the reduction path consists of one edge labeled
$a\in\Sigma_i^{\pm 1}$ with $h(a)=1$. Then, the length of the cycle is
unchanged. We do, however, reduce the number of letters on the cycle.
Therefore, an analogous estimation of the number of introduced states applies
and shows that it is polynomially bounded.

The second phase works just as for Theorem~\ref{thm-hnn}. We guess triples
$(p,i,q)$ such that $p$ and $q$ are not in the same strongly connected
component but there is no shortcut edge $(p,\varepsilon,q)$ yet.
Then, we verify that there is a reduction path from $p$ to $q$ with label
$w\in(\Sigma_i^{\pm 1})^+$. If this is the case, we add a shortcut edge
$(p,\varepsilon,q)$.

In the end, we guess $i\in\{0,1\}$ and verify, using the {\sf NP}-algorithm for
$G_i$, that the automaton, restricted to $\Sigma_i^{\pm 1}$, accepts a word
representing $1\in G_i$.

Let us show that this algorithm is sound and complete. As above, we can argue
that if it answers ``yes'', then the input automaton clearly accepts $1\in
G_0*G_1$.  For the completeness, we have to argue slightly differently. Suppose
the input automaton accepts a word representing $1\in G_0 * G_1$. We consider a
branch of the nondeterministic algorithm that saturates every reduction path.
Let $\cB$ be the resulting automaton.  Since $\cB$ also accepts a word
representing $1\in G_0 * G_1$, we consider such a word $w\in (\Sigma_0^{\pm
1}\cup \Sigma_1^{\pm 1})^*$ with a minimal number of syllables.

Suppose $w$ has more than one syllable. By Lemma~\ref{amalg:identity}, it
contains a syllable $s\in (\Sigma_i^{\pm 1})^+$ with $h(s)=1$.  Consider the
accepting run $r$ for $w$ and let $p$ and $q$ be the states occupied before and
after reading $s$.  The path taken by $r$ from $p$ to $q$ is not a reduction
path, because otherwise we could have taken a shortcut edge instead, in
contradiction to the minimality of $w$. This means, $p$ and $q$ lie on a cycle
that contains only letters in $\Sigma_i^{\pm 1}$.  Since $s$ is a syllable,
this implies that $r$ enters this cycle at $p$. Let $p'$ be the state occupied
in $r$ directly before $p$: Note that $r$ cannot start in $p$ because of
invariant (ii).  Because of invariant (iii), the edge from $p'$ to $p$ is
labeled with $\varepsilon$.  Thus, the path taken by $r$ from $p'$ to $q$ is a
reduction path, again contradicting the minimality of $w$.

Hence, $w$ has at most one syllable, which means $w\in (\Sigma_j^{\pm 1})^*$ for
some $j\in\{0,1\}$ and our application of the {\sf NP}-algorithm for $G_j$
answers ``yes''.
\end{proof}

\section{Hardness results}

Since knapsack for binary encoded integers is {\sf NP}-complete, it follows that the compressed
knapsack problem is {\sf NP}-hard for every group that contains an element of infinite order. 
In this section, we prove that (uncompressed) knapsack and subset sum are {\sf NP}-complete for a direct 
product of two free groups of rank at least two. This solves an open problem from \cite{FrenkelNU15}.

With $F(\Sigma)$ we denote  the free group generated by the set $\Sigma$. Moreover, let $F_2 = F(\{a,b\})$.

\begin{theorem}
The subset sum problem and the knapsack problem are {\sf NP}-complete for $F_2 \times F_2$.
For knapsack {\sf NP}-hardness already holds for the variant, where the exponent variables are
allowed to take values from $\mathbb{Z}$ (see Remark~\ref{remark}).
\end{theorem}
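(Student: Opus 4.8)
\emph{Membership in \textsf{NP}.} For subset sum this is immediate: a certificate is a subset $S\subseteq\{1,\dots,k\}$, and $\prod_{i\in S}g_i$ is a product of at most $k$ words over the generators, hence of polynomial total length; since the (even compressed) word problem of the graph group $F_2\times F_2$ is solvable in polynomial time \cite{LoSchl07,Loh14}, one verifies $\prod_{i\in S}g_i=g$ in polynomial time. For knapsack, observe that $F_2\times F_2$ is exactly the graph group $\dG(A,I)$ with $(A,I)$ the four-cycle $C_4$ (with $a,b$ the generators of the first and $c,d$ those of the second free factor). Thus knapsack is a special case of solvability of compressed exponent equations over $\dG(A,I)$, which lies in \textsf{NP} by Theorem~\ref{thm-main-graph-groups}; alternatively one may guess binary encodings of the exponents within the exponential bound of Theorem~\ref{thm-main-technical} and verify the resulting identity via the polynomial-time compressed word problem of $\dG(A,I)$. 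The variant of knapsack with exponents ranging over $\mathbb{Z}$ reduces to the $\mathbb{N}$-variant by Remark~\ref{remark}. Hence all four problems are in \textsf{NP}, and it remains to establish \textsf{NP}-hardness.

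\emph{\textsf{NP}-hardness.} I would reduce from a ``combinatorial'' \textsf{NP}-complete problem, say 3-SAT, so that no large integers need to be produced in the construction; this detour is essential because $F_2$ is hyperbolic and hence subset sum over a single free factor is in \textsf{P} \cite{MyNiUs14}, so the hardness must come entirely from the interaction of the two factors, not from arithmetic. Given a 3-CNF formula $\varphi$ with variables $y_1,\dots,y_n$ and clauses $C_1,\dots,C_m$, the plan is to construct group elements $g_1,\dots,g_N\in F_2\times F_2=F(a,b)\times F(c,d)$, all given by words of length polynomial in $|\varphi|$, together with a target $g$, such that the subset sum instance ``$\exists S\subseteq\{1,\dots,N\}:\ \prod_{i\in S}g_i=g$'' is solvable iff $\varphi$ is satisfiable; the same data is then read as a knapsack instance $g_1^{x_1}\cdots g_N^{x_N}=g$. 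The $F(a,b)$-components of the $g_i$ would be designed so that, after free reduction, the only subsets whose product equals the first component of $g$ are those picking exactly one of the two ``literal'' elements of each variable consistently (so the chosen set encodes a truth assignment); the $F(c,d)$-components would simultaneously encode, in a Dyck-like cancellation pattern, a proof that every clause contains a satisfied literal. The verification of both conditions is carried out through Levi's Lemma (Lemma~\ref{lemma-levi}) applied coordinatewise.

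\emph{The main obstacle} is the gadget design, on three counts. First, since only words over the generators (not SLPs) are allowed, the construction cannot ``add numbers'' and must realise the combinatorial constraints purely through cancellation of short words, and one must prove that the two coordinates \emph{together} cut out exactly the intended solution set. Second, the instances must be robust: one needs that a $\{0,1\}$-valued solution exists iff an $\mathbb{N}$-valued one does iff a $\mathbb{Z}$-valued one does, which is what gives \textsf{NP}-hardness of knapsack already with exponents in $\mathbb{Z}$; concretely, the words should be arranged so that assigning any generator an exponent other than its intended $0$ or $1$ destroys some cancellation. Third, correctness: the direction ``$\varphi$ satisfiable $\Rightarrow$ solvable'' is just exhibiting the obvious solution, but ``solvable $\Rightarrow$ $\varphi$ satisfiable'' is the technical heart and requires analysing, via Levi's Lemma, how a product of the chosen powers can reduce to $g$ in each free coordinate. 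Combining membership with this reduction yields \textsf{NP}-completeness of subset sum and of knapsack (the latter even with $\mathbb{Z}$-exponents) for $F_2\times F_2$.
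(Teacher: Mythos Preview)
Your membership argument is correct and matches what the paper's main results provide.

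For \textsf{NP}-hardness, however, your proposal is not a proof but a plan, and its hard part---the gadget construction---is entirely missing. You correctly identify this yourself (``the main obstacle is the gadget design''), but you do not resolve it: you do not give the words $g_i$, you do not carry out the Levi-lemma analysis, and you do not establish the robustness under $\mathbb{Z}$-exponents that you need. As it stands, there is no argument here, only a specification of what an argument would have to accomplish. Whether such a direct 3-SAT encoding can be made to work with short words, and simultaneously be robust against arbitrary integer exponents, is far from clear; in a free group, cancellation can happen in many ways, and ruling out unintended solutions across all of $\mathbb{Z}^N$ is precisely where such constructions tend to break.

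The paper takes a completely different and much shorter route that avoids gadget design altogether. It invokes Miha{\u\i}lova's construction: starting from a finitely presented group $\langle\Sigma,R\rangle$ with \textsf{NP}-complete word problem and \emph{polynomial Dehn function} (such groups exist by \cite{BiOlSa02}), one forms the finite set
\[
D=\{(r^\epsilon,1)\mid r\in R,\ \epsilon\in\{-1,1\}\}\cup\{(a,a)\mid a\in\Sigma^{\pm 1}\}\subseteq F(\Sigma)\times F(\Sigma).
\]
Miha{\u\i}lova's theorem says $w=1$ in $\langle\Sigma,R\rangle$ iff $(w,1)\in\langle D\rangle$, and the polynomial Dehn function gives a polynomial bound $q(|w|)$ on the length of a $D$-product witnessing this (this is the bounded submonoid membership result of \cite{MyNiUs14}). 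Writing $D=\{g_1,\dots,g_k\}$ and noting that $D$ is closed under inverses, one gets that $w=1$ is equivalent to solvability of
\[
(w,1)=\prod_{i=1}^{q(|w|)}g_1^{a_{1,i}}\cdots g_k^{a_{k,i}}
\]
with the $a_{j,i}$ ranging over $\{0,1\}$, over $\mathbb{N}$, or over $\mathbb{Z}$---the three formulations coincide because inverses are already in $D$. Embedding $F(\Sigma)$ into $F_2$ finishes the reduction. This approach buys you everything at once: no gadgets, a fixed element set $D$, and the $\{0,1\}/\mathbb{N}/\mathbb{Z}$ equivalence for free from the symmetry $D=D^{-1}$.
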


\begin{proof}
In \cite{MyNiUs14} it was shown that there exists a fixed set $D \subseteq F_2 \times F_2$ such that that the following problem
(called the bounded submonoid problem)
is {\sf NP}-complete:

\smallskip
\noindent
{\bf Input:} A unary encoded number $n$ (i.e., $n$ is given by the string $a^n$) and an element $g \in F_2 \times F_2$

\smallskip
\noindent
{\bf Question:}  Do there exist $g_1, \ldots g_n \in D$ (not necessarily distinct) such that $g = g_1 g_2 \cdots g_n$ in $F_2 \times F_2$?

\smallskip
\noindent
Let us briefly explain the {\sf NP}-hardness proof, since we will reuse it.
We start with a finitely presented group $\langle\Sigma, R \rangle$ having
an {\sf NP}-complete word problem and a polynomial Dehn function. Such a group was
constructed in \cite{BiOlSa02}. To this group, the following classical construction by Miha{\u\i}lova \cite{Mih66} 
is applied: Let 
$$
D  = \{ (r^\epsilon,1) \mid r \in R, \epsilon \in \{-1,1\} \} \cup \{ (a,a) \mid a \in \Sigma^{\pm 1} \},
$$
which is viewed as a subset of $F(\Sigma) \times F(\Sigma)$. Note that $D$ is closed under taking inverses.
Let $\langle D \rangle \leq F(\Sigma) \times F(\Sigma)$ be the subgroup generated by $D$.
Miha{\u\i}lova proved that for every word $w \in (\Sigma^{\pm 1})^*$ the following equivalence holds:
$$
w = 1 \text{ in } \langle \Sigma, R \rangle \ \Longleftrightarrow \ (w,1) \in \langle D \rangle \text{ in } F(\Sigma) \times F(\Sigma) .
$$
Moreover, based on the fact that $\langle \Sigma, R \rangle$ has a polynomial Dehn function $p(n)$,
the following equivalence was shown in \cite{MyNiUs14}, 
where $q(n) = p(n) + 8(c \cdot p(n) + n)$, $c$ is the maximal length of a relator in $R$, and $D^n$ is the 
set of all products of $n$ elements from $D$:
$$
w = 1 \text{ in } \langle \Sigma, R \rangle \ \Longleftrightarrow \  \exists n \leq q(|w|) : 
(w,1)  \in D^n   \text{ in } F(\Sigma) \times F(\Sigma) .
$$
From these two equivalences it follows directly that the following three statements are equivalent for all words
$w \in (\Sigma^{\pm 1})^*$, where $D = \{ g_1, g_2, \ldots, g_k\}$:
\begin{itemize}
\item $w = 1$ in $\langle\Sigma, R \rangle$
\item $(w,1) = \prod_{i=1}^{q(|w|)} (g_1^{a_{1,i}} g_2^{a_{2,i}} \cdots g_k^{a_{k,i}})$ in $F(\Sigma) \times F(\Sigma)$ for $a_{j,i} \in \{0,1\}$
\item $(w,1) = \prod_{i=1}^{q(|w|)} (g_1^{a_{1,i}} g_2^{a_{2,i}} \cdots g_k^{a_{k,i}})$ in $F(\Sigma) \times F(\Sigma)$ for $a_{j,i} \in \mathbb{Z}$
\end{itemize}
This shows that the subset sum problem and the knapsack problem are {\sf NP}-hard for the group 
$F(\Sigma) \times F(\Sigma)$, where for knapsack we allow integer exponents.
 To get the same results for $F_2 \times F_2$, we use the fact that
$F_2$ contains a copy of $F(\Sigma)$.
\end{proof}

\bibliographystyle{abbrv}
\bibliography{bib}

\end{document}